\title{The Selgrade decomposition for linear semiflows on Banach spaces}
\author{Alex Blumenthal}
\address{Alex Blumenthal \\ 4305 Kirwan Hall \\ University of Maryland \\ College Park, MD 20742}
\email{alexb123@math.umd.edu}
\urladdr{http://math.umd.edu/~alexb123/}
\thanks{First author: This material is based upon work supported by the National Science Foundation under Award No. 1604805. Second author: Partially supported by the NSF grant DMS-1067929, by the Research Board and Research Council of the University of Missouri, and by the Simons Foundation.}
\author{Yuri Latushkin}
\address{Yuri Latushkin, 104 Mathematics Bldg., University of Missouri, Columbia, MO 65211 }
\email{latushkiny@missouri.edu}
\urladdr{https://faculty.missouri.edu/~latushkiny/}
\subjclass[2010]{Primary: 37C70, Secondary: 37L30, 37B25, 35B41}
\keywords{Attractors, repellers, linear skew product flow, Morse decomposition, exponential separation, exponential dichotomy, infinite dimensional dynamical system, Gelfand numbers}
\theoremstyle{theorem}
\newtheorem{thm}{Theorem}[section]
\newtheorem{thmA}{Theorem}
\newtheorem{cor}[thm]{Corollary}
\newtheorem{lem}[thm]{Lemma}
\newtheorem{prop}[thm]{Proposition}
\theoremstyle{definition}
\newtheorem{defn}[thm]{Definition}
\newtheorem{rmk}[thm]{Remark}
\newtheorem{cla}[thm]{Claim}
\newtheorem{ex}[thm]{Example}
\newcommand{\N}{\mathbb{N}}
\renewcommand{\P}{\mathbb{P}}
\newcommand{\R}{\mathbb{R}}
\newcommand{\Z}{\mathbb{Z}}
\newcommand{\Bc}{\mathcal{B}}
\newcommand{\Fc}{\mathcal{F}}
\newcommand{\Gc}{\mathcal{G}}
\newcommand{\Ac}{\mathcal{A}}
\newcommand{\Nc}{\mathcal{N}}
\renewcommand{\Mc}{\mathcal M}
\newcommand{\Ec}{\mathcal E}
\newcommand{\Vc}{\mathcal V}
\newcommand{\Cc}{\mathcal C}
\newcommand{\Zc}{\mathcal Z}
\renewcommand{\a}{\alpha}
\renewcommand{\d}{\delta}
\newcommand{\e}{\epsilon}
\newcommand{\graph}{\operatorname{graph}}
\newcommand{\dist}{\operatorname{dist}}
\newcommand{\ds}{/ \! /}
\newcommand{\codim}{\operatorname{codim}}
\newcommand{\Gap}{\operatorname{Gap}}
\begin{document}

\begin{abstract}
We extend Selgrade's Theorem, Morse spectrum, and related concepts to the setting of linear skew product semiflows on a separable Banach bundle. We recover a characterization, well-known in the finite-dimensional setting, of exponentially separated subbundles as attractor-repeller pairs for the associated semiflow on the projective bundle.
\end{abstract}

\maketitle
\centerline{\it Dedicated to the memory of George Sell, to whom we owe so much.}

\medskip

\section{Introduction and statement of results}\label{sec:introResults}

In a brilliant series of papers by George Sell \cite{sacker1974existence, sacker1976existenceII, sacker1976existenceIII, sacker1978existence, sacker1978spectral} and his collaborators and contemporaries
\cite{johnson1980analyticity, johnson1987ergodic, salamon1988flows, selgrade1975isolated}, 
a foundation of the modern theory of finite-dimensional linear skew product flows was laid out and numerous connections to ordinary differential equations were established. Moreover, there is by now a considerable literature dedicated to the treatment of partial differential equations as dynamical systems. Of particular interest for dynamicists are dissipative PDE, for example dissipative parabolic problems (e.g., Navier-Stokes in two dimensions and reaction-diffusion equations) and dispersive wave equations. Many such equations can be thought of as differentiable dynamical systems on infinite-dimensional Hilbert or Banach spaces \cite{henry1981geometric, sell2013dynamics}. Moreover, many such systems admit global compact attractors \cite{babin1992attractors, hale2010asymptotic}, and so can be studied using techniques adapted from classical dynamical systems theory for finite-dimensional systems. For more information we refer the reader to \cite{babin2006global, carvalho2013attractors, kloeden2011nonautonomous, temam2012infinite}.

\medskip

Let us restrict the discussion to a certain subclass of techniques: decompositions of the tangent bundle into continuous and measurables subbundles and associated spectra, for now in the finite-dimensional setting. Notable such decompositions include those of Sacker-Sell \cite{sacker1974existence, johnson1987ergodic, sacker1978spectral}, Selgrade \cite{selgrade1975isolated}, and Oseledets \cite{oseledets1968multiplicative}; see also \cite{grune2000uniform, rasmussen2009dichotomy, rasmussen2010alternative}, and see \cite{colonius2012dynamics} for a general reference. These objects are useful in a variety of ways, e.g., in establishing existence of invariant manifolds for the dynamics. 

Many properties of the Sacker-Sell decomposition and spectrum have been extended to a setting amenable to applications to PDE by Sacker and Sell \cite{sacker1994dichotomies} and many others \cite{chicone1999evolution, chow1995existence, chow1996two, chow1996unbounded, magalhaes1987spectrum, shen1998almost, shvydkoy2006cocycles}. To briefly review, this decomposition splits the tangent bundle into (continuous) subbundles, possibly infinite-dimensional, each pair of which satisfies \emph{exponential dichotomy}: two subbundles have an exponential dichotomy if there exists some $\lambda \in \R$ such that the smallest asymptotic exponential growth rate on one subbundle is strictly larger $\lambda$ at every base point, while the largest exponential growth rates on the other subbundle is strictly smaller.

The Oseledets decomposition and associated Lyapunov spectrum (a.k.a. Lyapunov exponents) has been similarly extended to the setting of cocycles of linear operators on infinite-dimensional Banach spaces; see, e.g., \cite{mane1983lyapounov, ruelle1982characteristic, thieullen1987fibres}, as well as \cite{Blumenthal20162377} and the literature cited therein. The Oseledets decomposition is really an aspect of the ergodic theory of a linear cocycle: roughly, it can be thought of as the 'measurable' counterpart to the Sacker-Sell decomposition. In particular, Oseledets subbundles are defined only at almost every base point with respect to a given invariant measure on the base, and vary measurably as opposed to continuously in the fiber. Consequently, the Oseledets decomposition is typically much finer than the Sacker-Sell decomposition; see, e.g., \cite{chicone1999evolution, colonius2012dynamics} for more on this subject.

\medskip

What is missing from the literature, however, is an extension of the Selgrade decomposition to the infinite-dimensional setting. The purpose of this paper is to address this gap, obtaining a Selgrade-type decomposition for linear semiflows of Banach space operators. The results in this paper are applicable to the derivative cocycles of a large class of dissipative parabolic equations.

In the finite-dimesional setting, the Selgrade decomposition sits between those of Sacker-Sell and Oseledets. To review, the Selgrade decomposition is the finest decomposition of the tangent bundle into continuous subbundles which are \emph{exponentially separated}: roughly, two subbundles are exponentially separated if over every point in the base space, the growth of vectors in one subbundle is exponentially larger than the growth of vectors in the other \cite{blumenthal2015characterization, bronsteinnonautonomous, colonius2012dynamics}. Equivalently, when viewed on projective space, exponentially separated subbundles correspond to attractor-repeller pairs, and so the Selgrade decomposition gives rise to the \emph{finest Morse decomposition} for the associated flow on the projective bundle; see \cite{colonius2012dynamics, selgrade1975isolated} for more details.

Exponential dichotomy is a strictly stronger condition than exponential separation, and so the Selgrade decomposition is a finer decomposition than that of Sacker-Sell (cf. \cite{sacker1978spectral, sacker1994dichotomies, selgrade1975isolated}). On the other side, the Selgrade decomposition can be thought of as a continuously-varying outer approximation to the Oseledets decomposition; this is especially useful due to the potential `irregularity' of the Oseledets decomposition \cite{colonius2012dynamics}.

\medskip

In a quite general infinite-dimensional setting, we are able to recover much of the finite-dimensional theory of Selgrade decompositions in this paper. Our results include (1) a characterization of exponentially separated subbundles as asymptotically compact attractor-repeller pairs for the semiflow on the projective bundle, and (2) an at-most countable decomposition into finite-dimensional exponentially separated subspaces. 

Everyone who builds an infinite-dimensional version of a finite dimensional theory is being punished twice: first, because proofs are very hard, and second, because, on the surface, the final product looks not much different from the original. This paper is not an exception. The usual difficulties that we must overcome are noncompactness of the infinite dimensional unit sphere, noninvertibility of injective linear maps, existence of subspaces with no direct complements, and presence of essential spectrum for infinite dimensional operators. 

In particular, our proof of (1) requires us to extend the theory of attractor-repeller pairs to the setting of semiflows on general metric spaces. Attractor theory in this setting is explored in \cite{hurley1995chain} (see also \cite{choi2002chain}). However, we are not aware of any previous detailed studies of repellers or attractor-repeller pairs for semiflows \emph{relative to the whole (non-locally compact) domain}. The closest approaches in the literature include studies of attractor-repeller pairs defined relative to compact invariant sets (see, e.g., \cite{rybakowski2012homotopy}); the literature on attractors for nonautonomous dynamical systems (see, e.g., \cite{carvalho2013attractors, kloeden2011nonautonomous} and the many references therein); and \cite{chen2011state}, where the authors define and briefly discuss a notion of repeller dual. These previous studies do not suffice for our purposes, and so in \S \ref{sec:generalAttractorTheory} we carefully develop a theory of repellers and attractor-repeller pairs for semiflows on general metric spaces when the attractor is asymptotically compact. 

We also rely on and further develop the techniques of \cite{blumenthal2015characterization} relating exponential splitting of cocycles and Gelfand $s$-numbers (a Banach space version of singular values; see \cite{pietsch1986eigenvalues} for a comprehensive review). This entails using some nontrivial facts regarding angles between infinite-dimensional subspaces used in \cite{Blumenthal2016} and $q$-dimensional volume growth used in \cite{Blumenthal20162377}.

\newpage
\subsection{Statement of results}

\subsubsection*{Assumptions} Let $B$ be a compact metric space with metric $d_B$. Let $\Bc$ be a real Banach space with norm $|\cdot|$; we write $\Vc = B \times \Bc$ for the trivial Banach bundle over $B$. At times, we will abuse notation somewhat and regard the fiber $\Vc_b = \{b\} \times \Bc$ over the point $b \in B$ as a vector space. We write $\pi_B : \Vc \to B$ for the projection onto $B$. We let $\phi : \R \times B \to B$ be a continuous flow on $B$. We write $\phi^t(\cdot) = \phi(t, \cdot)$ for the time-$t$ map of $\phi$.



\medskip

In all that follows, we assume that $\Phi : [0,\infty) \times \Vc \to \Vc$ is a semiflow on $\Vc$ of injective linear operators over $(B, \phi)$; that is, $\Phi$ is a semiflow on $\Vc$ for which
\begin{itemize}
\item[(H1)] $\pi_B \circ \Phi = \phi$; and
\item[(H2)] for any $(t, b) \in [0,\infty) \times B$, the map $v \mapsto \Phi(t, b, v)$ is a bounded, injective linear operator $\Vc_b \to \Vc_{\phi^t b}$.
\end{itemize}
For $t \geq 0, b \in B$, let us write $\Phi^t_b : \Vc_b \to \Vc_{\phi^t b}$ for the bounded, injective operator as in (b) above. We will assume that the assignment $(t, b) \mapsto \Phi^t_b$ satisfies the following continuity properties:
\begin{itemize}
\item[(H3)] For each fixed $t \geq 0$, the map $b \mapsto \Phi^t_b$ is continuous in the operator norm topology on $L(\Bc)$, the space of bounded linear operators on $\Bc$.
\item[(H4)] The mapping $(t, b) \mapsto \Phi^t_b$ is continuous in the strong operator topology on $L(\Bc)$.
\end{itemize}
{ As can be easily checked, property (H4) implies that $\Phi : [0,\infty) \times \Vc \to \Vc$ is a continuous mapping in the norm $d_\Vc$ on $\Vc$. Here $d_\Vc((b_1, v_1), (b_2, v_2)) := \max\{ d_B(b_1, b_2), |v_1 - v_2|\}.$}

\medskip

We write $\P \Vc$ for the projective bundle of $\Vc$, i.e., $\P \Vc = B \times \P \Bc$. Here, $\P \Bc$ is the projective space of $\Bc$, defined by $\P \Bc = (\Bc \setminus \{ 0 \}) / \sim$, where $v \sim w$ for $v, w \in \Bc \setminus \{ 0 \} $ iff $v = \lambda w$ for some $\lambda \in \R \setminus \{0 \} $. The metric $d_{\P \Vc}$ on $\P\Vc$ is now defined by 
\[
d_{\P \Vc}((b_1, v_1), (b_2, v_2)) = \max\{ d_B(b_1, b_2), d_\P (v_1, v_2) \} \, ,
\]
where $d_\P$ is the projective metric on $\P \Bc$ (defined in \eqref{eq:projectiveMetric}). The projectivized semiflow $\P \Phi : [0,\infty) \times \P \Vc \to \P \Vc$ is well-defined and continuous in the projective metric $d_{\P \Vc}$. { Note, however, that $\P \Phi$ need not be uniformly continuous in the $\P \Vc$ argument.}

\subsubsection*{Main results}

In the finite dimensional setting, it is well-known that attractor-repeller pairs for the projectivized flow { are in one-to-one correspondence with exponentially separated subbundles for the linear flow (see, e.g., Chapter 5 of \cite{colonius2012dynamics}). } 
Our first main result is an extension of this characterization to the infinite dimensional setting. Below the \emph{repeller dual} of an attractor $\Ac \subset \P \Vc$ for the 
projectivized semiflow $\P \Phi$ is denoted by $\Ac^*$; see \S \ref{subsec:repellerTheory} for a precise definition.

\begin{thmA}\label{thm:equiv}
Assume that $\Bc$ is a separable Banach space and that $B$ is chain transitive for the base flow $\phi$. Let $\Phi$ be a linear semiflow satisfying (H1) -- (H4) as above. Then, the following hold:
\begin{itemize}
	\item[(a)] Let $\Ac$ be an asymptotically compact attractor for $\P \Phi$, and write $\Ec := \P^{-1} \Ac$ and $\Fc := \P^{-1} \Ac^*$. Then, $\Ec, \Fc$ are continuous subbundles of $\Vc$ for which $\dim \Ec$ is finite and $\Vc = \Ec \oplus \Fc$. Moreover, this splitting is exponentially separated.
	\item[(b)] Let $\Vc = \Ec \oplus \Fc$ be a splitting into exponentially separated subbundles of $\Vc$ for which $\dim \Ec$ is finite and constant. Then $\P \Ec$ is an asymptotically compact attractor for $\P \Phi$ for which $(\P \Ec)^* = \P \Fc$.
\end{itemize}
\end{thmA}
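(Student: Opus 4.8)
The plan is to prove the two directions of Theorem~A separately, each by a combination of the attractor--repeller machinery developed in \S\ref{sec:generalAttractorTheory} and the Gelfand $s$-number estimates relating exponential separation to volume growth.

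For part (a), we are given an asymptotically compact attractor $\Ac$ for $\P\Phi$ with repeller dual $\Ac^*$, and we set $\Ec = \P^{-1}\Ac$, $\Fc = \P^{-1}\Ac^*$. First I would check that $\Ec_b$ and $\Fc_b$ are genuinely linear subspaces of $\Vc_b$: since $\Ac$ is invariant and closed and the fibers of $\P\Vc$ are acted on by the linear maps $\Phi^t_b$ (which on projective space identify lines), $\Ac_b = \Ac \cap \P\Vc_b$ must be a union of projective subspaces; using invariance and compactness of $B$ together with the attractor property one argues that $\Ac_b$ is in fact a single projective-linear piece and hence $\Ec_b$ is a linear subspace (and similarly for $\Fc_b$). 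The key finite-dimensionality claim for $\Ec$ follows from asymptotic compactness of $\Ac$: an infinite-dimensional fiber $\Ec_b$ would have a bounded sequence of unit vectors with no convergent subsequence, contradicting that $\Ac$, being an asymptotically compact attractor, has compact fibers (this is where one uses that $\Ac$ attracts a uniform neighborhood and is asymptotically compact, forcing $\Ac$ itself to be compact). Continuity of $b\mapsto \Ec_b$ follows from closedness of $\Ac$ and upper semicontinuity plus constancy of dimension (constancy of $\dim\Ec_b$ uses chain transitivity of $\phi$: the dimension is a lower-semicontinuous, invariant, integer-valued function on a chain transitive set, hence constant). The complementarity $\Vc = \Ec\oplus\Fc$ comes from the general attractor--repeller theory: every point of $\P\Vc_b$ not in $\Ac_b$ lies in the basin of $\Ac$ and every point not in $\Ac^*_b$ flows toward $\Ac$, and $\Ac_b\cap\Ac^*_b=\emptyset$; translated back to $\Vc$ this says $\Ec_b\cap\Fc_b = \{0\}$ and $\Ec_b + \Fc_b = \Vc_b$ (the latter because a vector with nonzero components in neither the attractor nor its dual cannot exist once $\dim\Ec$ is finite so $\Fc$ is closed of finite codimension). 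Finally, exponential separation: on $\Ec$ the dynamics is finite-dimensional and the projective dynamics converges to $\Ac$, giving a uniform gap; quantitatively one invokes the Gelfand $s$-number / $q$-dimensional volume-growth estimates from \cite{blumenthal2015characterization, Blumenthal20162377} to convert the topological attraction into a uniform exponential rate separating growth on $\Ec$ from growth on $\Fc$.

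For part (b), we start from an exponentially separated splitting $\Vc = \Ec\oplus\Fc$ with $\dim\Ec = d$ finite and constant, and must show $\P\Ec$ is an asymptotically compact attractor with dual $\P\Fc$. Compactness of $\P\Ec$ over compact $B$ is immediate since each fiber is a $d$-dimensional projective space varying continuously. To see $\P\Ec$ is an attractor, fix $b$ and a line $[v]\in\P\Vc_b$: write $v = v_\Ec + v_\Fc$; if $v_\Ec\neq 0$, exponential separation forces $|\Phi^t_b v_\Fc|/|\Phi^t_b v_\Ec|\to 0$ exponentially, so $\P\Phi^t[v]$ approaches $\P\Ec$, and one upgrades this to attraction of a uniform neighborhood of $\P\Ec$ using the uniformity (over $b$) in the definition of exponential separation together with continuity of the splitting; asymptotic compactness of the attracting process follows because the forward images land in the compact set $\P\Ec$ up to small error. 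The remaining lines, those with $v_\Ec = 0$, i.e.\ $[v]\in\P\Fc$, form exactly the complementary invariant set that repels, so $\P\Fc$ is the candidate for $(\P\Ec)^*$; to conclude $(\P\Ec)^* = \P\Fc$ I would verify the defining property of the repeller dual from \S\ref{subsec:repellerTheory} — namely that $\P\Fc$ consists precisely of those points whose forward orbit does not converge to $\P\Ec$, equivalently whose $\omega$-limit set avoids $\P\Ec$ — which again is a direct consequence of exponential separation applied fiberwise, using chain transitivity of $B$ to ensure there are no extra invariant pieces in between.

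The main obstacle will be part (a): extracting a \emph{uniform} exponential separation rate and, relatedly, the finite-dimensionality of $\Ec$, from the purely topological hypothesis that $\Ac$ is an asymptotically compact attractor. In finite dimensions this is classical and uses compactness of the sphere bundle; in our setting the sphere bundle is not compact, so one must route the argument through asymptotic compactness of $\Ac$ and the Gelfand $s$-number estimates, and carefully control angles between the (possibly non-complemented at the outset) subspaces $\Ec_b$ and $\Fc_b$ using the angle estimates of \cite{Blumenthal2016}. Making the passage from ``$\P\Phi^t$ contracts toward $\Ac$'' to ``$\Phi^t$ separates $\Ec$ from $\Fc$ at a uniform exponential rate'' rigorous — including ruling out a sequence of base points along which the rate degenerates — is the technical heart of the proof.
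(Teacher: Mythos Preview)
Your outline has the right large-scale shape, and part (b) is essentially what the paper does. But part (a) contains a genuine gap and one misdirected step.

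\textbf{The real gap: complementarity $\Vc_b = \Ec_b \oplus \Fc_b$.} You write that ``$\Ec_b + \Fc_b = \Vc_b$ \ldots\ because a vector with nonzero components in neither the attractor nor its dual cannot exist once $\dim\Ec$ is finite so $\Fc$ is closed of finite codimension.'' This does not follow. Knowing $\Ec_b$ is finite-dimensional tells you nothing a priori about $\codim \Fc_b$; you need to \emph{produce} enough vectors in $\Fc_b$. In the finite-dimensional setting one gets this for free by a dimension count, but here the sphere in $\Vc_b$ is not compact and there is no such count. The paper's argument (Lemma~\ref{lem:repellerComplements}) is substantive and different from anything you sketch: for each $n$ one picks a complement $F_n'$ to $\Ec_{\phi^n b}$ with uniformly bounded projection norm, pulls it back to $F_n = (\Phi^n_b)^{-1} F_n'$, shows (using the prerepeller property of $V_\e$, Lemma~\ref{lem:repellersRepel}) that the associated projections $\pi_n$ onto $\Ec_b$ are \emph{uniformly} bounded, writes $F_n = \graph G_n$ for $G_n : F \to \Ec_b \cong \R^d$, and then uses separability of $\Bc$ and weak$^*$ compactness (Banach--Alaoglu, packaged as Lemma~\ref{lem:arzela}) to extract a strong-operator-topology limit $G$ with $\graph G = \Fc_b$. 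This is where separability of $\Bc$ is actually used, and it is the step that genuinely replaces the missing local compactness. Your proposal does not contain this idea or an alternative.

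\textbf{Misdirected step: exponential separation via Gelfand numbers.} You propose to pass from ``$\P\Phi^t$ contracts toward $\Ac$'' to a uniform exponential rate by invoking the Gelfand $s$-number / volume-growth characterization. The paper does \emph{not} use Gelfand numbers for this direction; those are reserved for Theorem~\ref{thm:finestDecomp}. Once the complement $\Fc$ with uniform projection bound is in hand, the paper argues directly (Lemma~\ref{lem:oneHalf}): the asymptotic-compactness characterization (Lemma~\ref{lem:asympCompact2}) gives a single time $T$ after which $B_\e(\Ac)$ maps into $B_{\e/C^*}(\Ac)$, and a short projective-metric computation converts this into $|\Phi^T_b f| \le \tfrac12 |\Phi^T_b e|$ for unit $e\in\Ec_b$, $f\in\Fc_b$; iteration then gives the exponential rate. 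Trying instead to verify the Gelfand-number hypothesis of Theorem~\ref{thm:characterizeExpSep} directly from the attractor property, without first having the splitting, is not obviously feasible.

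Smaller point: your argument that each $\Ac_b$ is a projective subspace (``union of projective subspaces \ldots\ single projective-linear piece'') is too vague. The paper's Lemma~\ref{lem:attractorSubspace} works two-plane by two-plane: for a boundary point $\P v$ of $\Ac \cap \P L$ in a $2$-plane $L$, a contradiction argument using asymptotic compactness forces the ratio $|\Phi^{-t}_b v|/|\Phi^{-t}_b v'| \to 0$, from which one deduces $\Ac \cap \P L$ is empty, a point, or all of $\P L$.
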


{The definition of \emph{asymptotically compact attractor} is given precisely in Definition \ref{defn:asympCompact} (see also Definition \ref{defn:asymptoticallyCompact2}), although our usage here agrees with standard definitions in the literature (see \cite{carvalho2013attractors, hale2010asymptotic, sell2013dynamics}).} Exponential separation is defined in \S \ref{subsec:exponentialSplittings}. The proof of Theorem \ref{thm:equiv} is an adaptation to the infinite-dimensional setting of the finite-dimensional version presented in \cite{salamon1988flows} and \cite{colonius2012dynamics}.

We note that it is entirely possible for a compact attractor of $\P \Phi$ to fail to be asymptotically compact, as the following example shows.
\begin{ex}\label{ex:one}
We construct a bounded linear operator on $\ell^2(\N)$ as follows. Denote by $\{e_n\}_{n = 1}^\infty$ the standard basis for $\ell^2(\N)$. For each $t \geq 0$ we now define the bounded linear operator $T^t : \ell^2(\N) \to \ell^2(\N)$ by $T^t e_1 = e_1$ and $T^t e_n = (\frac{n - 1}{n})^t e_n$ for $n > 1$. 
Note that although $\Ac = \{\P e_1\}$ is an attractor for $\P T : \P \ell^2(\N) \to \P \ell^2(\N)$, the subspace $\operatorname{Span}\{e_1\}$ is not exponentially separated from its orthogonal complement.
\end{ex}

 {We note, however, that in the above example the operator $T$ is not compact. Indeed, were $T$ any injective, compact linear operator and $\Ac$ a compact attractor for $\P T$, then it is a simple exercise to show that any compact attractor $\Ac$ would be automatically asymptotically compact. In Example \ref{ex:one}, any compact attractor for $\P T$ is a finite sum of generalized eigenspaces. The authors are not aware of an answer to the following question: \emph{If $\Phi$ is a linear semiflow of injective compact linear operators as in (H1) -- (H4), then is it possible for a compact attractor of $\P \Phi$ to fail to be asymptotically compact as in Definition \ref{defn:asymptoticallyCompact2}?}}

\medskip

Our second main result is a generalization of the classical Selgrade decomposition for linear flows on a finite dimensional vector bundle: \emph{a 
(finite) finest Morse decomposition (equivalently, a finest attractor sequence) of the projectivized flow exists} \cite{selgrade1975isolated}. Here, we will obtain a (at-most countable) finest attractor sequence comprised of \emph{asymptotically compact attractors}. 



\begin{thmA}\label{thm:finestDecomp}
Assume that $\Bc$ is a separable Banach space, and that $B$ is chain transitive for the base flow $\phi$. Let $\Phi$ be a linear semiflow as in (H1) -- (H4) above.

Then, there is an at-most countable sequence $\{\Ac_i\}_{i = 0}^N, N \in \Z_{\geq 0} \cup \{\infty\}$, of subsets of $\P \Vc$, with $\Ac_0 = \emptyset$ and $\Ac_i \subset \Ac_{i +1}$ for all $0 \leq i < N$, with the following properties:
\begin{itemize}
\item[(a)] For any $1 \leq i < N + 1$\footnote{What is meant by this shorthand is that if $N = \infty$, then $i \in \N$, and if $N < \infty$, then $1 \leq i \leq N$}, we have that $\Ac_i$ is an asymptotically compact attractor for $\P \Phi$.
\item[(b)] The sequence $\{\Ac_n\}$ is the finest such collection in the following sense: if $\Ac$ is any nonempty asymptotically compact attractor for $\P \Phi$, then $\Ac = \Ac_i$ for some $1 \leq i < N + 1$.
\end{itemize}
\end{thmA}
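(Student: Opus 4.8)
The plan is to build the finest attractor sequence from the finite-dimensional exponentially separated pieces guaranteed by Theorem~\ref{thm:equiv}, using a Zorn-type / exhaustion argument organized by a natural ordering on attractors. The key conceptual input is Theorem~\ref{thm:equiv}: every nonempty asymptotically compact attractor $\Ac$ for $\P\Phi$ is of the form $\P\Ec$ for a continuous, finite-dimensional, $\Phi$-invariant subbundle $\Ec$ with exponentially separated complement $\Fc$; conversely these subbundles produce such attractors. So the classification of asymptotically compact attractors reduces to classifying the finite-dimensional exponentially separated subbundles $\Ec \subset \Vc$, and these we will show form a nested chain.

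\medskip

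First I would establish that any two nonempty asymptotically compact attractors $\Ac = \P\Ec$ and $\Ac' = \P\Ec'$ are \emph{comparable}: either $\Ec \subset \Ec'$ or $\Ec' \subset \Ec$. The idea is that, writing $\Ec \oplus \Fc = \Ec' \oplus \Fc' = \Vc$ with the growth rates on $\Ec$ (resp.\ $\Ec'$) dominating those on $\Fc$ (resp.\ $\Fc'$), one compares the four intersections $\Ec \cap \Ec'$, $\Ec \cap \Fc'$, $\Fc \cap \Ec'$, $\Fc \cap \Fc'$; an exponential-separation bookkeeping argument (the growth rate of any vector is pinned between explicit exponents depending on which piece it lies in) forces either $\Ec \cap \Fc' = 0$ or $\Fc \cap \Ec' = 0$, and chain transitivity of the base together with invariance upgrades this to one of the two inclusions. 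This is the step I expect to be the main obstacle, since in infinite dimensions one must be careful about the existence of complements and about the fact that $\Fc, \Fc'$ are only closed subbundles; the angle estimates and $q$-dimensional volume-growth tools cited from \cite{Blumenthal2016, Blumenthal20162377} together with the Gelfand-number machinery of \cite{blumenthal2015characterization} are what make the dichotomy rigorous.

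\medskip

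Granting comparability, the collection of nonempty asymptotically compact attractors is linearly ordered by inclusion, and each is $\P\Ec$ with $\dim\Ec \in \Z_{\geq 1}$ finite. The dimension function is strictly monotone along the order (a proper inclusion of finite-dimensional subbundles with exponentially separated complements is a proper inclusion of the subspaces over each base point, hence strictly increases dimension — here again one uses continuity and constancy of fiber dimension). Therefore the order embeds into $\Z_{\geq 1}$ in a dimension-preserving way, so it is a (possibly finite, possibly countably infinite) well-ordered chain $\Ac_1 \subset \Ac_2 \subset \cdots$, with $N \in \Z_{\geq 0} \cup \{\infty\}$ the number of distinct elements. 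Setting $\Ac_0 = \emptyset$ and relabeling gives a nested sequence with $\Ac_i \subset \Ac_{i+1}$. Property (a) holds because each $\Ac_i$, $i \geq 1$, is by construction an asymptotically compact attractor, and property (b) is immediate: any nonempty asymptotically compact attractor $\Ac$ lies in the linearly ordered collection we have enumerated, hence equals some $\Ac_i$ with $1 \leq i < N+1$. The only remaining point is to rule out an order type longer than $\omega$ (e.g.\ to see the indexing really is by an initial segment of $\N$ rather than something like $\omega + 1$): this follows since the strictly increasing dimension function already maps the chain injectively into $\Z_{\geq 1}$, which has order type $\omega$, so the chain has order type at most $\omega$ and the shorthand "$1 \leq i < N+1$" with $N \in \Z_{\geq 0} \cup \{\infty\}$ is exactly right.
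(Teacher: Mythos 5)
Your proposal takes a genuinely different route from the paper. The paper's proof is \emph{algorithmic}: it takes the exponential separation of minimal index $k_1$, peels off $\Vc_1$, and recurses on $\Phi|_{\Vc_1^-}$; property (b) is then verified by tracking the set of all indices admitting an exponential splitting. The two technical ingredients are the uniqueness Lemma~\ref{lem:expSepUnique} and Proposition~\ref{prop:moveDownSubspace}, which (via the Gelfand-number characterization of Theorem~\ref{thm:characterizeExpSep}) reduces exponential splittings of $\Phi$ at index $k > \dim\Ec$ to those of $\Phi|_\Fc$ at index $k - \dim\Ec$. You instead propose to prove directly that any two nonempty asymptotically compact attractors are comparable, and to read off the chain structure from strict monotonicity of fiber dimension. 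This front-loads the comparison and avoids Proposition~\ref{prop:moveDownSubspace}; both organizations are legitimate, and the order-theoretic cleanup you describe at the end is correct.

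The comparability step is where your sketch is imprecise, and as written it contains a real gap. The bookkeeping you outline, once one fixes $\dim\Ec \leq \dim\Ec'$, shows (using $m(\Phi^t_b|_{\Ec_b}) \geq K^{-1}e^{\gamma t}|\Phi^t_b|_{\Fc_b}|$, the two-sided Gelfand estimate \eqref{eq:gelfandAchieved}, and $c_{k'+1} \leq c_{k+1}$) that $\Ec_b \cap \Fc'_b = \{0\}$ for \emph{every} $b$; the dichotomy is thus decided by the dimensions alone, and the appeal to chain transitivity of the base is not doing the work you attribute to it. More importantly, $\Ec_b \cap \Fc'_b = \{0\}$ does \emph{not} by itself imply $\Ec_b \subset \Ec'_b$: a $k$-dimensional subspace can meet a $k'$-codimensional complement trivially without lying in $\Ec'_b$. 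Closing this requires a backward-propagation argument in the style of the proof of Lemma~\ref{lem:expSepUnique}: write $v = e' + f' \in \Ec'_b \oplus \Fc'_b$ for $v \in \Ec_b$, pull back along the negative continuation inside $\Ec$, and estimate $|f'|/|v|$ using the uniform bound on $\sup_b |\pi_{\Fc'_b \ds \Ec'_b}|$ (Lemma~\ref{lem:repellerComplements}) together with the decay of $c_{k'+1}/m(\Phi^t|_\Ec)$. That substitution — the Gelfand/angle machinery of \S\ref{subsec:exponentialSplittings}, not ``chain transitivity plus invariance'' — is what makes the upgrade from trivial intersection to containment rigorous.
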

The proof of Theorem \ref{thm:finestDecomp} uses characterization of asymptotically compact attractors for $\P \Phi$ in Theorem \ref{thm:equiv} in addition to the characterization of exponential separation given in \cite{blumenthal2015characterization}, which we recall in Theorem \ref{thm:characterizeExpSep}, and a certain induction-type result {(Proposition \ref{prop:moveDownSubspace})} for exponentially separated subbundles which may be of independent interest.

With $\{\Ac_i\}, N$ as in Theorem \ref{thm:finestDecomp}, write $\Vc_i^+ = \P^{-1} \Ac_i = \Vc_1 \oplus \cdots \oplus \Vc_i$ and $\Vc_i^- = \P^{-1} \Ac_i^*$ for each $1 \leq i < N + 1$ so that $\Vc = \Vc_i^+ \oplus \Vc_i^-$ is an exponentially separated splitting of $\Vc$. We also write $\Vc_i = \Vc_i^+ \cap \Vc_{i - 1}^-$ and $\Mc_i =  \P \Vc_i = \Ac_i \cap \Ac_{i - 1}^*$; by Theorem \ref{thm:finestDecomp}, each $\Vc_i$ is a finite dimensional, equivariant, continuous\footnote{See Lemmas \ref{lem:compactSubbundle} and \ref{lem:fcContinuous}. We note however that continuity can be deduced directly from exponential separation, as carried out in, e.g., \cite{blumenthal2015characterization}} subbundle of $\Vc$. 

\begin{defn}
We call the subbundles $\{\Vc_i\}_{i = 1}^N$ the {\bf discrete Selgrade decomposition}\footnote{We use the terminology `discrete' to evoke an analogy with the discrete spectrum of a closed linear operator.} of $\Phi$. 
\end{defn}

We note that in Theorem \ref{thm:finestDecomp} it is possible for $\P \Phi$ to admit no asymptotically compact attractors. This stands in contrast to the finite dimensional case, where the Selgrade decomposition $\{\Vc_i\}$ may be trivial in the sense that $\Vc_1 = \Vc$, hence $\P \Vc$ is chain transitive under $\P \Phi$ (c.f. Corollary \ref{cor:chainTrans} below).

\begin{rmk}\label{rmk:subbundleFormulation}
It is possible to formulate the preceding results for more general bundles $\Vc$ than the trivial bundle. For simplicity, however, we do not pursue these extensions here, except to note that everything we do holds with virtually no changes when $\Vc$ is replaced with a continuously-varying finite-codimensional subbundle $\hat \Vc$ of the trivial bundle $\Vc = B \times \Bc$. That is, each fiber $\hat \Vc_b$ over $b \in B$ is a closed, finite-codimensional subspace of $\Bc$, and $b \mapsto \hat \Vc_b$ varies continuously in the Hausdorff distance (see \S \ref{subsec:grassmanian} for definitions).
\end{rmk}

The following corollaries describe additional properties of the discrete Selgrade decomposition $\{\Vc_i\}$.
\begin{cor}\label{cor:chainTrans}
Assume the setting of Theorem \ref{thm:finestDecomp}. For each $1 \leq i < N + 1$, the set $\Mc_i = \P \Vc_i$ is a chain transitive set for the projectivized flow $\P \Phi$.
\end{cor}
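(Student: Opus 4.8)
The plan is to exploit the attractor--repeller structure from Theorem \ref{thm:finestDecomp} together with the correspondence in Theorem \ref{thm:equiv}, reducing the claim to a general statement about semiflows: if $\Ac$ is an attractor and $\Rc = \Ac^*$ its repeller dual, then every \emph{attractor} of $\P\Phi$ strictly contained in $\Ac$ is in fact an attractor of the restricted semiflow on (a neighborhood of) $\Ac$, and dually for $\Rc$. Concretely, by construction $\Mc_i = \Ac_i \cap \Ac_{i-1}^*$. First I would observe that $\P\Phi$ restricts to a semiflow on the compact invariant set $\Ac_i$, and that $\Mc_i$ is exactly the repeller dual of $\Ac_{i-1}$ \emph{relative to} $\Ac_i$: indeed $\Ac_{i-1}$ is an attractor for $\P\Phi|_{\Ac_i}$ (being an attractor globally and contained in $\Ac_i$), so its complementary repeller in $\Ac_i$ is $\{x \in \Ac_i : \omega\text{-limit set of } x \text{ misses the basin of } \Ac_{i-1}\}$, which coincides with $\Ac_i \cap \Ac_{i-1}^*$ once one checks the basins agree. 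This identification is where I would need the detailed properties of repeller duals developed in \S\ref{subsec:repellerTheory}, in particular that $\Ac_{i-1}^*$ is characterized through the global basin of $\Ac_{i-1}$ and that asymptotic compactness of $\Ac_i$ makes $\P\Phi|_{\Ac_i}$ a semiflow on a compact metric space, where classical Conley theory applies.

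Having realized $\Mc_i$ as an attractor--repeller intersection inside a compact invariant set, the second step is to invoke the classical fact (Conley theory on compact spaces, as in \cite{colonius2012dynamics} or \cite{rybakowski2012homotopy}) that if $\Ac' \subset \Ac''$ are attractors of a semiflow on a compact metric space with $\Ac'$ the one that is "just below" $\Ac''$ in an admissible chain, then $\Ac'' \cap (\Ac')^*$ is chain transitive. To apply it here I would need to know that there is \emph{no} asymptotically compact attractor $\Ac$ with $\Ac_{i-1} \subsetneq \Ac \subsetneq \Ac_i$, which is precisely part (b) of Theorem \ref{thm:finestDecomp} (finest attractor sequence) — combined with the fact, to be verified, that any attractor of $\P\Phi|_{\Ac_i}$ lying between $\Ac_{i-1}$ and $\Ac_i$ would extend to a \emph{global} asymptotically compact attractor of $\P\Phi$ (one takes its basin union with the basin of $\Ac_{i-1}$; asymptotic compactness is inherited since it sits inside the asymptotically compact $\Ac_i$). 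So the argument is: were $\Mc_i$ not chain transitive, Conley's chain decomposition would produce a nontrivial attractor strictly between $\Ac_{i-1}$ and $\Ac_i$ inside $\Ac_i$, contradicting finest-ness.

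The main obstacle I anticipate is the bookkeeping needed to move between \emph{relative} attractor--repeller pairs (relative to the compact set $\Ac_i$) and the \emph{global} asymptotically compact attractors that Theorem \ref{thm:finestDecomp} speaks about. One must check: (i) an attractor of $\P\Phi|_{\Ac_i}$ properly containing $\Ac_{i-1}$ lifts to a global attractor whose asymptotic compactness follows from that of $\Ac_i$; (ii) the repeller dual relative to $\Ac_i$ agrees with $\Ac_i \cap \Ac_{i-1}^*$, i.e., that the notion of basin used in \S\ref{subsec:repellerTheory} is compatible with restriction to $\Ac_i$; and (iii) that $\P\Phi|_{\Ac_i}$ indeed falls within the scope of the compact-space Conley theory being cited (continuity of the restricted semiflow is immediate from continuity of $\P\Phi$ and compactness of $\Ac_i$). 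Once these compatibility statements are in place, chain transitivity of $\Mc_i$ is a direct consequence of the finest-decomposition property; the alternative, more self-contained route would be to prove chain transitivity of $\Mc_i$ directly by a chain-closing argument, but invoking the established finite-dimensional/compact theory on the compact invariant set $\Ac_i$ is cleaner since $\Ac_i$ is compact and all the relevant classical machinery is available there.
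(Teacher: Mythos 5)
Your high-level strategy coincides with the paper's: restrict attention to $\Ac_i = \P\Vc_i^+$ and invoke classical Conley theory there, reducing everything to the finest-ness statement in Theorem~\ref{thm:finestDecomp}(b). The paper phrases the restriction as applying finite-dimensional Conley/Selgrade theory to the linear flow $\P\Phi|_{\P\Vc_i^+}$, where $\Vc_i^+ = \Vc_1 \oplus \cdots \oplus \Vc_i$ is a finite-dimensional, continuous, equivariant subbundle; you phrase it as compact-space Conley theory on the compact invariant set $\Ac_i$. These are the same observation, and both are left as "details to the reader" in the paper.

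Where your sketch has a soft spot is exactly the step you flag as needing verification: that an attractor of $\P\Phi|_{\Ac_i}$ strictly between $\Ac_{i-1}$ and $\Ac_i$ promotes to a \emph{global} asymptotically compact attractor of $\P\Phi$. Your parenthetical argument for this does not quite work as written: if $\Ac_{i-1} \subset A$ then "taking the basin union with that of $\Ac_{i-1}$" changes nothing, and asymptotic compactness of $A$ (as a global attractor) is not automatically "inherited" from $\Ac_i$, since the preattractor one must produce lives in the noncompact ambient space $\P\Vc$, not inside $\Ac_i$. The clean way to close this, consistent with the machinery the paper has already built, is via the splitting correspondence rather than pure topology. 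An intermediate attractor $A$ for $\P\Phi|_{\P\Vc_i^+}$ corresponds, by the finite-dimensional Selgrade theory, to an exponentially separated splitting $\Vc_i^+ = \Ec' \oplus \Fc'$ with $\dim\Ec_{i-1} < \dim\Ec' < \dim\Vc_i^+$. Since $\Ec' \subset \Vc_i^+$ gives $m(\Phi^t_b|_{\Ec'_b}) \geq m(\Phi^t_b|_{(\Vc_i^+)_b})$, the exponential separation of $\Vc_i^+$ from $\Vc_i^-$ combines with that of $\Ec'$ from $\Fc'$ to show $\Vc = \Ec' \oplus (\Fc' \oplus \Vc_i^-)$ is exponentially separated. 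Theorem~\ref{thm:equiv}(b) then gives a global asymptotically compact attractor $\P\Ec'$ strictly between $\Ac_{i-1}$ and $\Ac_i$, contradicting Theorem~\ref{thm:finestDecomp}(b). This is the same game Proposition~\ref{prop:moveDownSubspace} plays, run in the opposite direction (from a splitting inside $\Ec$ rather than inside $\Fc$). With that substitution, your argument closes and is in fact a faithful expansion of the paper's terse proof.
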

Corollary \ref{cor:chainTrans} follows from Theorem \ref{thm:finestDecomp} and the classical Conley theory applied to the linear flow $\P \Phi|_{\P \Vc_i^+}$ for $1 \leq i < N + 1$. { This falls entirely under the purview of the finite-dimensional theory, an so details are left to the reader (see, e.g., \cite{colonius2012dynamics}).}

Note, however that we do not make any claim on the structure of chain recurrent points in $\P \Vc_i^-$. Indeed, the components $\{\Mc_i\}$ of the discrete
Selgrade decomposition need not contain all chain recurrent points for $\P \Phi$.

\begin{ex}\label{ex:two}
In the notation of Example \ref{ex:one}, for each $t \geq 0$ define the bounded linear operator $S^t : \ell^2(\N) \to \ell^2(\N)$ defined by setting $S^t e_1 = e_1$ and $S^t e_n = (\frac{n}{n-1})^t e_n$ for $n > 1$.
In the notation of Theorem \ref{thm:finestDecomp}, we have $N = \infty$, and $\Mc_i = \{\P e_{i + 1}\}$ for each $1 \leq i < \infty$. On the other hand, 
$e_1$ is an eigenvector with eigenvalue $1$, hence $\P e_1$ is a chain recurrent point for $\P S^t$ as  not contained in $\cup_{i = 1}^\infty \Mc_i$.
\end{ex}

Note that the operators $S^t$ in Example \ref{ex:two} are noncompact (indeed, the eigenvalue $1$ sits on the boundary of the essential spectrum). { Below we give an example of a family $\{ T_b \}_{b \in B}$ of injective, compact linear maps for which many chain recurrent points exist while admitting no forward invariant finite-dimensional subbundle.

\begin{ex}
Let $B = [1/2, 1]$ and let $\phi : B \to B$ be the identity map. For $b \in B$, define the operator $T_b$ on $\ell^2(\N)$ by $T_b e_n = n^{-1} \big( b e_n + (1 - b) e_{n + 1} \big) $ for all $n \geq 1$. As one can check, $b \mapsto T_b$ is continuous in the operator norm on $\ell^2(\N)$ and $T_b$ is injective for any $b \in B$. Moreover, points of the form $(1, \P e_n) \in \P \Vc$ are recurrent for the linear flow for any $n$ (indeed, each is a fixed point), yet $T_b$ admits no forward invariant finite-dimensional subspace for any $b \in [1/2, 1)$.
\end{ex}

On the other hand, $\{ T_b\}_{b \in B}$ cannot be realized as the time-one map of a semiflow. The authors are not aware of an answer to the following question: \emph{Is it possible to construct a linear semiflow of compact operators satisfying (H1) -- (H4) for which $N = 0$ in Theorem \ref{thm:finestDecomp} while admitting chain recurrent points?}
}



Our second corollary pertains to the {\bf discrete Morse spectrum} associated with the discrete Selgrade decomposition given earlier. Given a (compact) chain transitive component $\Mc$ for $\P \Phi$, we define the Morse spectrum $\Sigma_{\text{Mo}}(\Phi; \Mc)$ by
\begin{align*}
\Sigma_{\text{Mo}}(\Phi; \Mc) = \{ \lambda \in \R : & \,  \text{there are } \e^k \to 0, T^k \to \infty \text{ and } (\e^k, T^k) \text{ chains } \zeta^k \text{ in }\P \Vc \\
&  \text{ such that } \lambda(\zeta^k) \to \lambda \text{ as } k \to \infty\} \, .
\end{align*}
Chains are defined in \S \ref{subsec:chains}. Here, when $\zeta = \{(b_i, \P v_i)\}_{i = 0}^{n-1} \subset \P \Vc, \{T_i\}_{i = 0}^{n-1}$ is a chain for $\P \Phi$, we have written
\[
\lambda(\zeta) = \bigg( \sum_{i = 0}^{n-1} T_i \bigg)^{-1} \bigg( \sum_{i = 0}^{n-1} \log |\Phi^{T_i}_{b_i} v_i| \bigg)  \, ,
\]
where for each $i$, $v_i \in \Vc_{b_i}$ is a unit vector representative of $\P v_i$.

\begin{defn}
The discrete Morse spectrum $\Sigma_{{\rm Mo}}^{{\rm dis}}(\Phi)$ for $\Phi$ is defined by
\[
\Sigma_{{\rm Mo}}^{{\rm dis}}(\Phi) = \bigcup_{i = 1}^N \Sigma_{{\rm Mo}} (\Phi; \Mc_i) \, .
\]
\end{defn}

We now state the following description of the discrete Morse spectrum $\Sigma_{{\rm Mo}}^{{\rm dis}}(\Phi)$. Below, the Lyapunov exponent $\lambda(b, v)$ of a point $(b, v ) \in \Vc, v \neq 0$, is defined by $\lambda(b, v) := \limsup_{t \to \infty} t^{-1} \log |\Phi^t_b v|$. 

\begin{cor}\label{cor:morseSpec}
For each $1 \leq i < N + 1$, the Morse spectrum of $\Phi|_{\Vc_i}$ is a compact interval of the form $\Sigma_{{\rm Mo}}(\Phi; \Mc_i) = [\kappa^*(\Mc_i), \kappa(\Mc_i)]$, where $\kappa^*(\Mc_i), \kappa(\Mc_i)$ are attained Lyapunov exponents of $\Phi$, and for $1 \leq i < N$, we have
\[
\kappa^*(\Mc_i) < \kappa^*(\Mc_{i + 1}) \quad \text{ and } \quad \kappa(\Mc_i) < \kappa(\Mc_{i + 1}) \, .
\]
Moreover, the Lyapunov spectrum $\Sigma_{{\rm Lyap}}(\Phi; \Mc_i) = \{ \lambda(b, v) : (b, v) \in \Vc_i, v \neq 0\}$ is contained in $\Sigma_{{\rm Mo}}(\Phi; \Mc_i)$.
\end{cor}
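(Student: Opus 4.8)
The plan is to deduce the corollary from the classical finite-dimensional theory of the Morse spectrum (as developed, e.g., in Chapter~5 of \cite{colonius2012dynamics}), applied separately to each restricted flow $\Phi|_{\Vc_i}$, and then to obtain the strict ordering of the spectral intervals from exponential separation of consecutive subbundles $\Vc_i$ and $\Vc_{i+1}$.

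First I would fix $i$ with $1 \le i < N+1$ and use the structure of $\Vc_i$ furnished by Theorem~\ref{thm:finestDecomp}: it is a finite-dimensional, equivariant, continuous subbundle of $\Vc = B \times \Bc$ of constant dimension, defined over all of $B$. Since the fiber maps $\Phi^t_b$ are injective and $\dim\Vc_i$ is constant, they restrict to linear isomorphisms $\Vc_i(b) \to \Vc_i(\phi^t b)$, so $\Phi|_{\Vc_i}$ is a continuous linear flow on a finite-dimensional continuous vector bundle over the chain transitive base $(B,\phi)$. By Corollary~\ref{cor:chainTrans}, $\Mc_i = \P\Vc_i$ is chain transitive for $\P\Phi$, and since $\P(\Phi|_{\Vc_i})$ is simply the restriction of $\P\Phi$ to $\P\Vc_i = \Mc_i$, the full projective bundle of $\Vc_i$ is a single chain transitive component for this finite-dimensional flow; in other words, $\Phi|_{\Vc_i}$ has trivial Selgrade decomposition. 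The one bookkeeping step here is to confirm that the objects entering $\Sigma_{{\rm Mo}}(\Phi;\Mc_i)$ and $\Sigma_{{\rm Lyap}}(\Phi;\Mc_i)$ — the $(\e,T)$-chains for $\P\Phi$ lying in $\Mc_i$, the chain exponents $\lambda(\zeta)$, and the pointwise Lyapunov exponents of vectors in $\Vc_i$ — agree with the corresponding objects for the finite-dimensional flow $\Phi|_{\Vc_i}$, so that the finite-dimensional results apply without change.

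Granting this, the assertions about an individual $\Mc_i$ are precisely the classical ones: for a continuous linear flow on a finite-dimensional vector bundle over a chain transitive base whose projectivization is chain transitive, the Morse spectrum is a compact interval, its maximum and minimum are the largest and smallest Lyapunov exponents of the flow (both attained along genuine trajectories), and the Lyapunov spectrum is contained in the Morse spectrum. The last inclusion I would also check directly, as it is immediate: given $(b,v) \in \Vc_i$ with $v \ne 0$, pick $T^k \to \infty$ realizing $\limsup_{t\to\infty} t^{-1}\log|\Phi^t_b v| = \lambda(b,v)$ and regard each orbit segment $\{(\phi^s b, \P\Phi^s v)\}_{0 \le s \le T^k}$ as a degenerate $(\e^k, T^k)$-chain to see $\lambda(b,v) \in \Sigma_{{\rm Mo}}(\Phi;\Mc_i)$. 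This yields $\Sigma_{{\rm Mo}}(\Phi;\Mc_i) = [\kappa^*(\Mc_i), \kappa(\Mc_i)]$ with endpoints attained Lyapunov exponents of $\Phi$, together with $\Sigma_{{\rm Lyap}}(\Phi;\Mc_i) \subseteq \Sigma_{{\rm Mo}}(\Phi;\Mc_i)$.

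For the strict monotonicity of the endpoints I would argue directly from exponential separation, bypassing chains entirely: by construction $\Vc = \Vc_i^+ \oplus \Vc_i^-$ is an exponentially separated splitting, and $\Vc_i \subseteq \Vc_i^+$ while $\Vc_{i+1} = \Vc_{i+1}^+ \cap \Vc_i^- \subseteq \Vc_i^-$, so by the definition of exponential separation (\S~\ref{subsec:exponentialSplittings}) there is a uniform exponential gap between growth along $\Vc_i^+$ and growth along $\Vc_i^-$. Consequently every Lyapunov exponent attained by a nonzero vector in $\Vc_i$ is strictly separated from every one attained by a nonzero vector in $\Vc_{i+1}$; since $\kappa^*(\Mc_i), \kappa(\Mc_i)$ are realized by vectors in $\Vc_i$ and $\kappa^*(\Mc_{i+1}), \kappa(\Mc_{i+1})$ by vectors in $\Vc_{i+1}$, the intervals $[\kappa^*(\Mc_i),\kappa(\Mc_i)]$ and $[\kappa^*(\Mc_{i+1}),\kappa(\Mc_{i+1})]$ are strictly separated, which is exactly the stated pair of inequalities. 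I do not expect a genuinely hard step: the corollary is a localization of classical finite-dimensional Selgrade / Morse-spectrum theory to the finite-dimensional continuous subbundles $\Vc_i$ produced by Theorem~\ref{thm:finestDecomp}, glued by exponential separation, and the only point requiring real care — the one I would write out explicitly — is the compatibility check in the second paragraph, after which the results of \cite{colonius2012dynamics} may be quoted verbatim.
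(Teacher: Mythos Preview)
Your reduction to the classical finite-dimensional Morse--Selgrade theory is exactly what the paper does; the paper's entire proof is the sentence ``follows from the finite dimensional analogue applied to $\Phi|_{\Vc_i^+}$'' together with a citation of \cite{colonius2012dynamics}. The only organizational difference is that the paper works on all of $\Vc_i^+ = \Vc_1 \oplus \cdots \oplus \Vc_i$ rather than on each $\Vc_i$ separately: on $\Vc_i^+$ the finest Morse decomposition is precisely $\Vc_1, \ldots, \Vc_i$, so the finite-dimensional theorem yields both the interval form of each $\Sigma_{{\rm Mo}}(\Phi;\Mc_j)$ \emph{and} the endpoint ordering simultaneously, with no separate argument needed.

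Your independent argument for the ordering, however, contains a genuine overclaim. Exponential separation of $\Vc_i^+$ from $\Vc_i^-$ is a \emph{pointwise} statement: it gives $\lambda(b,f) \le \lambda(b,e) - \gamma$ for $e \in (\Vc_i)_b$ and $f \in (\Vc_{i+1})_b$ at the \emph{same} base point $b$, but says nothing about $\lambda(b',f)$ versus $\lambda(b,e)$ when $b \ne b'$. It therefore does \emph{not} follow that ``every Lyapunov exponent in $\Vc_i$ is strictly separated from every one in $\Vc_{i+1}$''; the Morse intervals can and do overlap, as Example~\ref{ex:three} already shows. What \emph{does} follow from the pointwise inequality, after taking $\sup_b$ (respectively $\inf_b$) on each side, is the strict ordering of the two right endpoints (respectively left endpoints), which is exactly the content of the stated inequalities. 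So your conclusion is recoverable, but the phrase ``the intervals are strictly separated'' is false and must be replaced by this endpoint computation---or, more simply, you can follow the paper and invoke the finite-dimensional result on $\Vc_i^+$, which delivers the endpoint ordering without any extra work.
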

\noindent Corollary \ref{cor:morseSpec} follows from the finite dimensional analogue applied to $\Phi|_{\Vc_i^+}$. {Again this fits in the framework of the finite-dimensional theory (see, e.g., \cite{colonius2012dynamics}), and so details are left to the reader.}

\medskip

So far we have not discussed the Morse spectrum associated to the `essential' Selgrade subbundle $\Vc^- := \cap_{i = 1}^N \Vc_i^-$. This subbundle can easily fail to be chain transitive, and so it is possible that $\Sigma_{{\rm Mo}}(\Phi ; \Vc^-)$ need not be a connected interval. Moreover, it is possible for $\Sigma_{{\rm Mo}}(\Phi ; \Vc^-)$ 
to overlap with $\Sigma^{{\rm dis}}_{{\rm Mo}}(\Phi)$, as the following example illustrates. 

\begin{ex}\label{ex:three}
In the notation of Example \ref{ex:two}, let $B = [1,2]$ equipped with the identity map and consider the linear semiflow $S^t_b := b^t S^t$ over $B$. Then in the notation of Theorem \ref{thm:finestDecomp} we have $N = \infty$, $\Mc_i = \{\P e_{i + 1}\}$ and $\Vc^- = \operatorname{Span} \{e_1\}$. Moreover $\Sigma_{{\rm Mo}}^{{\rm dis}}(\Phi) = (1,4]$, while $\Sigma_{{\rm Mo}}(\Phi; \Vc^-) = [1,2]$.
\end{ex}

\subsubsection*{Plan for the paper}

The plan for the paper is as follows. In \S \ref{sec:generalAttractorTheory} we recall elements of the theory of asymptotically compact attractors for semiflows on a general metric space. 
Much of this is review, although the material in \S \ref{subsec:repellerTheory} on repeller duals does not, to the knowledge of the authors, appear elsewhere in the literature. In \S \ref{sec:banachPrelim} we recall necessary preliminaries from Banach space geometry.

The bulk of the original work in this paper is devoted to proving the `(a)$ \Rightarrow$ (b)' implication in Theorem \ref{thm:equiv}. This is proved as Proposition \ref{prop:compSubspace} in \S \ref{sec:hard}. We complete the proofs of Theorems \ref{thm:equiv} and \ref{thm:finestDecomp} in \S \ref{sec:cleanUp}.

\section{Attractors for semiflows on general metric spaces}\label{sec:generalAttractorTheory}

\subsubsection*{Setting for \S \ref{sec:generalAttractorTheory}.} For the purposes of this section, we let $X$ be a complete metric space with metric $d_X$. Throughout, for $x \in X$ and $r > 0$ we write $B_r(x) = \{y \in X : d_X(x, y) < r\}$ for the open ball of radius $r$ centered at $x$. For $r > 0$ and a set $Z \subset X$, we write $B_r(Z) = \{y \in X : d_X(y, Z) < r\}$, where here $d_X(y, Z) = \inf\{ d_x(y, z) : z \in Z\}$ denotes the minimal distance from $y$ to $Z$.

Through this section we study an \emph{injective, continuous semiflow} $\psi$ on $X$; that is, $\psi : [0,\infty) \times X  \to X$ is a continuous map for which (i) $\psi(0, x) = x$, (ii) $\psi(t, \psi(s, x)) = \psi(t + s, x)$ for all $s, t \geq 0$, $x \in X$, and (iii) for all $t \geq 0$, the map $x \mapsto \psi(t,x)$ is injective. We emphasize that the time-$t$ maps $\psi^t(\cdot) := \psi(t, \cdot)$ are not assumed to be  invertible on all of $X$.

\subsection{Preattractors and attractors}\label{subsec:preattractors}

Much of the material in \S \ref{subsec:preattractors} is standard (see \cite{hurley1995chain}). However, due to its importance in the formulation of the results of this paper and the arguments to come, we review it in detail.

Following Hurley \cite{hurley1995chain}, we make the following definition. 

\begin{defn}
A nonempty open set $U \subset X$ is called a \emph{preattractor} if for some $T > 0$, we have that 
\[
\overline{\psi([T, \infty) \times U)} \subset U \, .\]
 We associate to preattractors $U$ a corresponding \emph{attractor} $A$, defined by
\[
A = \bigcap_{t \geq 0} \overline{\psi([t, \infty) \times U)} \, .
\]
We refer to the pair $(A, U)$ as an \emph{attractor pair}. Note that
\[
A =  \omega(U) := \{ \text{limits of the form } \lim_n \psi^{t_n} x_n \, , \text{ where } t_n \to \infty \text{ and } \{x_n\} \subset U \} \, .
\]
\end{defn}

We note that this differs from the classical definition of `attractor'; when, however, $X$ is a compact metric space, this definition coincides with the usual one. The definition of preattractor was introduced in \cite{hurley1995chain} (see also \cite{choi2002chain}), where it was used to characterize chain recurrence for flows on noncompact spaces.

\begin{lem}\label{lem:attractor1}
Let $(A, U)$ be an attractor pair, and let $x \in X$ be such that $\omega(x) \cap A \neq \emptyset$. Then, $\omega(x) \subset A$.
\end{lem}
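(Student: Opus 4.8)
The statement is a standard invariance property of $\omega$-limit sets relative to an attractor pair, but the noncompactness and noninvertibility of the semiflow mean we must argue carefully. The plan is to show that once a piece of the forward orbit of $x$ gets close to $A$, it must be trapped inside the preattractor $U$ from that time on; then $\omega(x)$, being built from limits of that trapped tail, lands in $\overline{\psi([t,\infty)\times U)}$ for every $t$, hence in $A$.

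First I would fix $T>0$ with $\overline{\psi([T,\infty)\times U)}\subset U$, and use the hypothesis $\omega(x)\cap A\neq\emptyset$ to produce a point $p\in\omega(x)\cap A$ and a sequence $t_n\to\infty$ with $\psi^{t_n}x\to p$. Since $A=\omega(U)\subset U$ (indeed $A\subset\overline{\psi([T,\infty)\times U)}\subset U$ and $U$ is open), the point $p$ lies in the open set $U$, so for all large $n$ we have $\psi^{t_n}x\in U$. Fix one such index $n_0$ with $s:=t_{n_0}$; then $\psi^s x\in U$, and by the preattractor property, for every $\tau\ge T$ we get $\psi^{\tau}(\psi^s x)=\psi^{s+\tau}x\in\overline{\psi([T,\infty)\times U)}\subset U$. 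Thus the forward orbit $\{\psi^r x : r\ge s+T\}$ is entirely contained in $U$.

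Next I would extract the conclusion. Let $q\in\omega(x)$ be arbitrary, say $\psi^{r_k}x\to q$ with $r_k\to\infty$. Discard finitely many terms so that $r_k\ge s+T$ for all $k$; then $\psi^{r_k}x=\psi^{r_k-s}(\psi^s x)$ with $\psi^s x\in U$ and $r_k-s\to\infty$, which exhibits $q$ as a limit of the form $\lim_k\psi^{\tau_k}y_k$ with $\tau_k\to\infty$ and $y_k=\psi^s x\in U$. Hence $q\in\omega(U)=A$. Since $q$ was arbitrary, $\omega(x)\subset A$.

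\textbf{Main obstacle.} The only delicate point is justifying $A\subset U$ so that the $\omega$-limit point $p$ actually lands in the open trapping region — this uses $A\subset\overline{\psi([T,\infty)\times U)}\subset U$, which is immediate from the definitions, so in fact the argument is quite direct. If one wanted to avoid invoking $A\subset U$, an alternative is to note that any neighborhood of $p\in A$ meets $\psi([T,\infty)\times U)$, but the clean route is the containment $A\subset U$. No compactness or invertibility of $\psi^t$ is needed, which is why the lemma survives in this general setting.
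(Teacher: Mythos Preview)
Your proof is correct and follows essentially the same approach as the paper: pick $p\in\omega(x)\cap A\subset U$, use openness of $U$ to get some $\psi^{s}x\in U$, and conclude $\omega(x)=\omega(\psi^{s}x)\subset\omega(U)=A$. Your intermediate trapping step (showing the whole tail $\{\psi^r x:r\ge s+T\}\subset U$) is harmless but unnecessary---once a single orbit point lies in $U$, the inclusion $\omega(x)\subset\omega(U)$ is immediate.
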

\begin{proof}
Let $x^* \in \omega(x) \cap A$, and let $t_n \to \infty$ be a sequence for which $\psi^{t_n} x \to x^*$. Then, for $n$ sufficiently large we have $\psi^{t_n} x \in U$, hence $\omega(x) \subset \omega(U) = A$.
\end{proof}

\begin{defn}\label{defn:asympCompact}
An attractor pair $(A, U)$ is \emph{asymptotically compact} if the following holds: for any sequence of reals $t_n \to \infty$ and any sequence $\{x_n\} \subset U$, we have that $\{\psi^{t_n} x_n\}$ possesses some convergent subsequence.
\end{defn}

Note that an attractor may be empty, whereas an asymptotically compact attractor is always nonempty (Lemma \ref{lem:asympCompact} below). Even when $A$ is nonempty and compact, a preattractor $U$ for $A$ may contain points which have empty $\omega$-limit sets, running contrary to the typical intuition that attractors genuinely `attract' an open neighborhood of initial conditions. Asymptotic compactness precludes this possibility.

\medskip

{The concept of asymptotic compactness is prominent in the study of infinite-dimensional dissipative dynamical systems \cite{hale2010asymptotic}, where it is often used to check for the existence of a maximal global attractor (see, e.g., \cite{carvalho2013attractors, sell2013dynamics}). However, the standard definition usually refers to a property of the semiflow itself: the semiflow $\{ \phi^t\}$ is called asymptotically compact if Definition \ref{defn:asympCompact} holds with $U = X$. As the following example shows, asymptotic compactness in this sense need not hold for a projectivized linear semiflow, even when the linear semiflow consists of compact operators.


\begin{ex}\label{ex:badAsympCompact}
Consider the semiflow $\psi^t = \P T^t$ on $\P \ell^2(\N)$, where for $t \geq 0$, $T^t : \ell^2(\N) \to \ell^2(\N)$ is the compact linear operator defined by
\[
T^t e_n = n^{-t} e_n \, .
\]
Observe that $\P \ell^2(\N)$ is (trivially) a preattractor with corresponding attractor $A_\infty = \{ \P e_n \}_{n \geq 1}$. The attractor pair $(A_\infty, \P \ell^2(\N))$ is \emph{not} asymptotically compact, since $\{\P T^n (\P e_n)\}_n = \{ \P e_n\}_n$ has no convergent subsequence in $\ell^2(\N)$. 

Let $A_1 = \{ \P e_1\}  \subset \P \ell^2(\N)$, and let $U_1$ be a small open neighborhood of $A_1$. Then, as one can check, $(A_1, U_1)$ is an asymptotically compact attractor pair. 
\end{ex}
}
%

Asymptotically compact attractor pairs have many qualities similar to their counterparts in the locally compact setting.
\begin{lem}\label{lem:asympCompact}
Let $(A, U)$ be an asymptotically compact attractor pair. Then,
\begin{itemize}
	\item[(a)] we have that $A$ is nonempty and compact;
	\item[(b)] for any $x \in U$, $\omega(x)$ is nonempty and $\omega(x) \subset A$; and
	\item[(c)] for any $t \geq 0$ we have $\psi^t(A) = A$.
\end{itemize}
\end{lem}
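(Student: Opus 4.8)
The plan is to prove the three parts of Lemma~\ref{lem:asympCompact} by repeatedly exploiting the asymptotic compactness hypothesis together with the defining property of a preattractor, namely that $\overline{\psi([T, \infty) \times U)} \subset U$ for some $T > 0$.

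For part (a), nonemptiness is immediate: pick any $x \in U$ and any sequence $t_n \to \infty$; then $\{\psi^{t_n} x\}$ has a convergent subsequence by asymptotic compactness, and its limit lies in $\omega(U) = A$. For compactness, I would argue that $A$ is closed and that every sequence in $A$ has a convergent subsequence with limit in $A$. Closedness: each set $\overline{\psi([t,\infty) \times U)}$ is closed, and $A$ is their intersection. Sequential compactness: given $\{a_k\} \subset A = \omega(U)$, write each $a_k$ as a limit $\lim_m \psi^{s_m^k} x_m^k$ with $s_m^k \to \infty$, $x_m^k \in U$; by a diagonal argument pick $y_k = \psi^{\tau_k} z_k$ with $z_k \in U$, $\tau_k \to \infty$ and $d_X(y_k, a_k) < 1/k$. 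Then $\{y_k\}$ has a convergent subsequence by asymptotic compactness, and since $\tau_k \to \infty$ and $z_k \in U$ its limit belongs to $\omega(U) = A$; as $d_X(y_k, a_k) \to 0$, the corresponding subsequence of $\{a_k\}$ converges to the same point in $A$.

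Part (b) is essentially the same argument as in Lemma~\ref{lem:attractor1}: given $x \in U$, asymptotic compactness applied to any sequence $t_n \to \infty$ with $x_n = x$ shows $\omega(x) \neq \emptyset$, and any point of $\omega(x)$ is a limit $\lim \psi^{t_n} x$ with $t_n \to \infty$ and $x \in U$, hence lies in $\omega(U) = A$. (Alternatively, one invokes Lemma~\ref{lem:attractor1} once $\omega(x) \cap A \neq \emptyset$ is known, but the direct argument is cleaner.)

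Part (c) is the main obstacle, since $\psi^t$ need not be invertible, so the inclusion $A \subset \psi^t(A)$ requires genuine work. The inclusion $\psi^t(A) \subset A$ is easy: if $a \in A$, write $a = \lim \psi^{s_n} x_n$ with $s_n \to \infty$, $x_n \in U$; then by continuity $\psi^t a = \lim \psi^{s_n + t} x_n \in \omega(U) = A$. For the reverse inclusion, fix $a \in A$ and $t \geq 0$; for each $n \geq 1$ choose (using $A = \omega(U)$) a point $y_n = \psi^{s_n} x_n$ with $s_n \to \infty$, $x_n \in U$, and $d_X(y_n, a) < 1/n$; by discarding finitely many terms assume $s_n > t + T$ so that $w_n := \psi^{s_n - t} x_n \in U$ and $\psi^t w_n = y_n \to a$. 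Now apply asymptotic compactness to the sequence $\{w_n\} = \{\psi^{s_n - t} x_n\}$ (with $s_n - t \to \infty$): after passing to a subsequence, $w_n \to w$ for some $w$, and $w \in \omega(U) = A$ since $s_n - t \to \infty$ and $x_n \in U$. By continuity of $\psi^t$, $\psi^t w = \lim \psi^t w_n = \lim y_n = a$, so $a = \psi^t w \in \psi^t(A)$. This gives $A \subset \psi^t(A)$ and completes the proof.
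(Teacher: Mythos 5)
Your proof is correct. Parts (b) and (c) follow essentially the same lines as the paper's proof (the paper also verifies $\psi^t\omega(U)\subset\omega(U)$ by continuity and $\omega(U)\subset\psi^t\omega(U)$ by applying asymptotic compactness to $\{\psi^{t_n-t}x_n\}$); the only superfluous step on your side is the $d_X(y_n,a)<1/n$ approximation in (c), since $a\in\omega(U)$ is already an exact limit of the form $\lim_n\psi^{t_n}x_n$. Where you genuinely diverge is part (a): the paper first establishes (c) and then proves compactness of $A$ by pulling each $x_n\in A$ back to some $x_n'\in A\subset U$ with $\psi^{t_n}x_n'=x_n$ (which requires both (c) and the observation that $A\subset\overline{\psi([T,\infty)\times U)}\subset U$), whereas you prove compactness directly by a diagonal argument, choosing $y_k=\psi^{\tau_k}z_k$ with $\tau_k\to\infty$, $z_k\in U$, and $d_X(y_k,a_k)<1/k$, then invoking asymptotic compactness on $\{y_k\}$. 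Your route makes (a) logically independent of (c), which is arguably cleaner; the paper's route buys a shorter argument once (c) is in hand but introduces a dependence that your version avoids.
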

\begin{proof}
Item (b) is immediate. For (c) we use the fact that $A = \omega(U)$: for any $t > 0$ we will show that $\psi^t \omega(U) = \omega(U)$. The inclusion ``$\subset$'' is easiest: if $\{x_n\} \in U, t_n \to \infty$ are sequences for which $\psi^{t_n}(x_n) \to x$ for some $x \in \omega(U)$, then $\psi^t x = \lim_{n \to \infty} \psi^{t + t_n}x_n$ by continuity, hence $\psi^t x \in \omega(U)$.

For the other direction, fix $x \in \omega(U)$ and let $\{x_n\} \subset U, t_n \to \infty$ be such that $x = \lim_{n \to \infty} \psi^{t_n}x_n$. For $n$ sufficiently large we have $t_n \geq t$, and so $\psi^{t_n - t}(x_n)$ is defined for $n$ sufficiently large. Applying asymptotic compactness, let $x^* \in \omega(U)$ be a subsequential limit point of $\{\psi^{t_n - t} x_n\}$. Again by continuity, we have that $\psi^t x^* = \lim_n \psi^{t_n} x_n = x$, hence $x \in \psi^t \omega(U)$.

To show (a), recall that $A$ is nonempty by the definition of asymptotic compactness. It remains to prove that $A$ is sequentially compact. To see this, let $\{x_n\} \subset A$ be any infinite sequence, and let $t_n \to \infty$ be arbitrary. Writing $ x_n' = \psi^{- t_n} x_n$, using (c) to do so, it follows that $\{\psi^{t_n} x_n'\} = \{x_n\}$ possesses a subsequential limit in $\omega(U) = A$.
\end{proof}

We now prove the following useful characterization of asymptotic compactness for semiflows on metric spaces.

\begin{lem}\label{lem:asympCompact2}
Let $(A, U)$ be an attractor pair. Then, the following are equivalent.
\begin{itemize}
\item[(a)] $(A, U)$ is asymptotically compact.
\item[(b)] $A$ is nonempty, compact, and for any $\e > 0$ there exists $T > 0$ such that $\overline{\psi([T, \infty) \times U )} \subset B_\e(A)$.
\end{itemize}
\end{lem}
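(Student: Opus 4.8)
The plan is to prove the equivalence by showing (b) $\Rightarrow$ (a) directly and (a) $\Rightarrow$ (b) by a contradiction argument, using the characterization $A = \omega(U)$ together with Lemma \ref{lem:asympCompact}, which already gives us nonemptiness and compactness of $A$ under asymptotic compactness.

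\begin{proof}[Proof plan]
\textbf{(b) $\Rightarrow$ (a).} Suppose $A$ is nonempty, compact, and for every $\e > 0$ there is $T > 0$ with $\overline{\psi([T,\infty)\times U)} \subset B_\e(A)$. Let $t_n \to \infty$ and $\{x_n\} \subset U$; I want a convergent subsequence of $\{\psi^{t_n} x_n\}$. For each $k$, choose $T_k$ so that $\overline{\psi([T_k,\infty)\times U)} \subset B_{1/k}(A)$; then for $n$ large (depending on $k$) we have $d_X(\psi^{t_n} x_n, A) < 1/k$. So for each $k$ I can pick $y_k := \psi^{t_{n_k}} x_{n_k}$ (along a subsequence, which I can further diagonalize so that $n_k$ is strictly increasing) with $d_X(y_k, A) < 1/k$, hence a point $a_k \in A$ with $d_X(y_k, a_k) < 1/k$. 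Since $A$ is compact, $\{a_k\}$ has a subsequence converging to some $a \in A$, and along that subsequence $y_k \to a$ as well. This produces the required convergent subsequence of $\{\psi^{t_n} x_n\}$.

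\textbf{(a) $\Rightarrow$ (b).} Assume $(A,U)$ is asymptotically compact. By Lemma \ref{lem:asympCompact}(a), $A$ is nonempty and compact, so only the approximation statement remains. Suppose it fails: there is $\e > 0$ such that for every $T > 0$, $\overline{\psi([T,\infty)\times U)} \not\subset B_\e(A)$. Taking $T = n$, I can choose $t_n \geq n$ and $x_n \in U$ with $d_X(\psi^{t_n} x_n, A) \geq \e/2$ (using that the closure lies within $\e/2$ of the image itself, or simply picking the point in the image directly). Then $t_n \to \infty$, so by asymptotic compactness $\{\psi^{t_n} x_n\}$ has a subsequence converging to some point $z \in X$; by definition $z \in \omega(U) = A$. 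But $d_X(z, A) \geq \e/2 > 0$, contradicting $z \in A$. Hence the approximation statement holds, completing the proof.
\end{proof}

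The main obstacle, such as it is, is bookkeeping in the (b) $\Rightarrow$ (a) direction: passing from ``eventually within $1/k$'' to an honest subsequence requires a small diagonal extraction to ensure the indices $n_k$ are increasing, and one must be slightly careful that the closure in the hypothesis of (b) does not cause trouble (it only helps, since the image is contained in its closure). Everything else is a routine application of compactness of $A$ and the identity $A = \omega(U)$.
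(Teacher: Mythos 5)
Your proof is correct and follows essentially the same approach as the paper: both directions argue by compactness of $A$ and the identity $A = \omega(U)$, with (a)$\Rightarrow$(b) proved by contradiction via asymptotic compactness. Your treatment of the closure in (a)$\Rightarrow$(b) — picking an image point within $\e/2$ of the offending closure point — is a mild streamlining of the paper's double-indexed diagonal extraction, but the argument is the same in substance.
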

\begin{proof}
\noindent {\bf (a) $\Rightarrow$ (b). } That $A$ is nonempty and compact was established in Lemma \ref{lem:asympCompact}. Assume now the following contradiction hypothesis: there exists some $\e > 0$ such that for any $T > 0$, we have that $\overline{\psi([T, \infty) \times U )} \setminus B_\e(A) \neq \emptyset$.

For each $T \in \N$, fix sequences $x_n^{(T)} \subset U$ and $t_n^{(T)} \geq T$ converging to an element of $\overline{\psi([T, \infty) \times U)} \setminus B_\e(A)$. Then, there is an $N^{(T)} \in \N$ sufficiently large so that $\psi^{t_n^{(T)}} x_n^{(T)} \notin B_{\e / 2}(A)$ for all $n \geq N^{(T)}$. Define now the diagonal subsequences $\tilde t_L = t_{N^{(L)}}, \tilde x_L = x_{N^{(L)}}^{(L)}$ and note that $\tilde t_L \to \infty$ as $L \to \infty$. On the other hand, the limit points of $\{\psi^{\tilde t_L} \tilde x_L\}$ (of which there is at least one, by asymptotic compactness) are at distance $\geq \e/2$ from $A$, which contradicts the definition $A = \omega(U)$. Thus (b) holds.

\noindent{\bf (b) $\Rightarrow$ (a). } Let $\{x_n\} \subset U$ and $t_n \to \infty$. Using (b), it follows that for any $\e > 0$ there exists $N = N(\e)$ such that for any $n \geq N$, we have that $d_X(\psi^{t_n} x_n , A) < \e$ for all $n \geq N$. Define the subsequence $n_i = N(1/i)$, and for each $i$ let $\hat x_i \in A$ be such that $d_X(\psi^{t_{n_i}} x_{n_i}, \hat x_i) < 2 / i$. Then, by compactness the sequence $\{\hat x_i\}_i$ has a convergent subsequence, which by construction is a cluster point of $\{\psi^{t_{n_i}} x_{n_i}\}$, hence of $\{\psi^{t_n} x_n\}$. This completes the proof.
\end{proof}

{ Lemma \ref{lem:asympCompact2} has the following consequence: given an asymptotically compact attractor pair $(A, U)$, there exists $\e > 0$ sufficiently small so that $B_\e(A)$ is a preattractor for $A$ for which $(A, B_\e(A))$ is asymptotically compact (cf. Example \ref{ex:badAsympCompact}). Thus we obtain the following `intrinsic' formulation of the asymptotic compactness property.

\begin{defn}\label{defn:asymptoticallyCompact2}
A compact, forward invariant subset $A \subset X$ is an asymptotically compact attractor if for some (hence all sufficiently small) $\e > 0$ we have that $(A, B_\e(A))$ is an asymptotically compact attractor pair.
\end{defn}
}

\subsection{Repellers and attractor-repeller duals}\label{subsec:repellerTheory}

Here we discuss repellers and repeller-duals in our noncompact, noninvertible setting. To the best of the authors' knowledge, the material in \S \ref{subsec:repellerTheory} 
does not appear elsewhere in the literature. For the closest alternative approach, we refer to the book of Rybakowski \cite{rybakowski2012homotopy}, where the attract-repeller theory is recovered for semiflows restricted to compact invariant sets. In comparison, we present here an attractor-repeller theory that does not restrict to compact invariant sets, and instead is carried out on the entire space $X$. 

\begin{defn}\label{defn:prerepeller}
A \emph{prerepeller} is a nonempty open set $V \subset X$ with the property that for some $T > 0$, we have that $\overline{\cup_{t \geq T} (\psi^t)^{-1}(V)} \subset V$. The repeller $R$ associated to a prerepeller $V$ is defined to be
\[
R = \bigcap_{t \geq 0} \overline{\bigcup_{s \geq t} (\psi^s)^{-1}(V)} \, .
\]
Above, $(\psi^s)^{-1}(V)$ refers to the preimage of $V$. We call $(R, V)$ a \emph{repeller pair}. Note that $R$ may be empty.
\end{defn}

{ We give an alternative \emph{limit set} characterization of $R$ as follows. Let us abuse notation and write $\psi^{-t} x \in X$ for the preimage $(\psi^{t})^{-1} \{ x \}$; by injectivity, $\psi^{-t} x \in X$ is defined when it exists. Then,
\[
R = \omega^*(V) := \bigg\{ \begin{array}{c} \text{limits of the form } \lim_{n \to \infty} \psi^{-t_n} x_n \, , \text{ where } \{ x_n \} \subset V, \\ \, t_n \to \infty \, , \text{ and } \psi^{-t_n} x_n \text{ exists for each } n \end{array} \bigg\} \, .
\]
}
\begin{lem}\label{lem:repellerInvariant}
Let $(R, V)$ be a repeller pair. Then, $R$ is a closed, possibly empty, set for which $\psi^t R \subset R$ for all $t \geq 0$.
\end{lem}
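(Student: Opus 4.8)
The plan is to verify the two assertions in Lemma~\ref{lem:repellerInvariant} directly from the definition of $R$ as an intersection of closed sets, together with the limit-set characterization $R = \omega^*(V)$.

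\medskip

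\textbf{Closedness.} For each $t \geq 0$ the set $\overline{\bigcup_{s \geq t} (\psi^s)^{-1}(V)}$ is closed by construction, so $R$, being an intersection of closed sets, is automatically closed. (That $R$ may be empty is already asserted in Definition~\ref{defn:prerepeller}, and no argument is needed.) This step is immediate.

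\medskip

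\textbf{Forward invariance $\psi^t R \subset R$.} Fix $t \geq 0$ and let $x \in R$. Using the $\omega^*$-characterization, write $x = \lim_{n \to \infty} \psi^{-t_n} x_n$ with $\{x_n\} \subset V$, $t_n \to \infty$, and $\psi^{-t_n} x_n$ defined for each $n$. I would then show $\psi^t x = \omega^*$-limit of an admissible sequence. For $n$ large enough that $t_n \geq t$, the point $\psi^{-(t_n - t)} x_n$ exists — indeed $\psi^{-(t_n-t)} x_n = \psi^{t}\big(\psi^{-t_n} x_n\big)$, since applying $\psi^{t_n-t}$ to the right-hand side gives $\psi^{t_n - t}\psi^t \psi^{-t_n} x_n = \psi^{t_n}\psi^{-t_n}x_n = x_n$, and injectivity of $\psi^{t_n - t}$ makes the preimage unique. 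Hence by continuity of $\psi^t$,
\[
\psi^t x = \psi^t\Big(\lim_{n\to\infty}\psi^{-t_n}x_n\Big) = \lim_{n\to\infty}\psi^t\psi^{-t_n}x_n = \lim_{n\to\infty}\psi^{-(t_n-t)}x_n \, .
\]
Since $x_n \in V$, $t_n - t \to \infty$, and each $\psi^{-(t_n-t)}x_n$ exists, the right-hand side exhibits $\psi^t x$ as an element of $\omega^*(V) = R$. This proves $\psi^t R \subset R$.

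\medskip

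Alternatively, if one prefers to avoid relying on the (parenthetically stated) limit-set characterization, the same conclusion follows from the nested-intersection definition: one checks that $(\psi^t)^{-1}$ of the preimage sets behaves well, or more directly uses that $\psi^{-t_n} x_n = \psi^t(\psi^{-t_n}x_n)$-type identities to move a point of $\overline{\bigcup_{s \geq t'}(\psi^s)^{-1}(V)}$ forward. The only mild subtlety — the ``main obstacle'', such as it is — is the bookkeeping of preimages under the non-invertible semiflow: one must use injectivity of the time-$t$ maps to guarantee that $\psi^{-s}x$ is well-defined when it exists and that $\psi^t \psi^{-t_n} = \psi^{-(t_n-t)}$ on the relevant points, and one must take care that continuity of $\psi^t$ is applied only after passing to indices with $t_n \geq t$. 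No compactness or asymptotic-compactness hypothesis is needed here; this lemma is purely formal.
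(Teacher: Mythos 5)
Your proof is correct, and it takes a mildly different route from the paper's. You argue pointwise from the limit-set characterization $R = \omega^*(V)$: given $x = \lim_n \psi^{-t_n}x_n \in R$, you use injectivity to identify $\psi^t\psi^{-t_n}x_n = \psi^{-(t_n-t)}x_n$ for $n$ large, then pass to the limit by continuity to exhibit $\psi^t x \in \omega^*(V) = R$. The bookkeeping you flag (only applying continuity and the preimage identities after passing to indices with $t_n \geq t$, and using injectivity to make $\psi^{-s}$ well-defined) is exactly the right subtlety. The paper instead works directly with the nested-intersection definition via a one-line set-calculus computation: it observes
\[
\psi^t(R) \;=\; \psi^t\bigg(\bigcap_{T\geq t}\overline{\textstyle\bigcup_{s\geq T}(\psi^s)^{-1}(V)}\,\bigg) \;\subset\; \bigcap_{T\geq t}\overline{\textstyle\bigcup_{s\geq T}(\psi^{s-t})^{-1}(V)} \;=\; R\,,
\]
using $\psi^t\big((\psi^s)^{-1}(V)\big)\subset(\psi^{s-t})^{-1}(V)$ and the fact that continuity gives $\psi^t(\overline{Y})\subset\overline{\psi^t(Y)}$. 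The two arguments are essentially the sequence-level and set-level formulations of the same idea; the paper's version is slightly more self-contained, since it does not presuppose the equivalence $R = \omega^*(V)$ (which the paper states without proof as an ``alternative characterization''), while yours arguably makes the mechanism of forward invariance more transparent. You already correctly gesture at the set-calculus route as your alternative, so both options are in your proposal.
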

\begin{proof}
We compute
\begin{align*}
\psi^t(R) & = \psi^t \bigg(\bigcap_{T \geq 0} \overline{\bigcup_{s \geq T} (\psi^s)^{-1}(V)}  \bigg) = \psi^t \bigg(\bigcap_{T \geq t} \overline{\bigcup_{s \geq T} (\psi^s)^{-1}(V)}  \bigg)  \\
& \subset \bigcap_{T \geq t} \psi^t \bigg( \overline{\bigcup_{s \geq T} (\psi^s)^{-1}(V)} \bigg)  
 \subset \bigcap_{T \geq t} \overline{\bigcup_{s \geq T} (\psi^{s - t})^{-1} (V) } = R \, , 
\end{align*}
having used the continuity of the time-$t$ map $\psi^t$ to deduce that $\psi^t(\overline Y) \subset \overline{\psi^t(Y)}$ for any subset $Y \subset X$.
\end{proof}
\noindent Note that the inclusion in Lemma \ref{lem:repellerInvariant} may be strict (contrast with Lemma \ref{lem:asympCompact}).

\medskip

{ We now turn our attention to the duality between attractors and repellers in our setting, assuming asymptotic compactness of the attractor.
}{ \begin{defn}
Let $C \subset X$. We define the \emph{dual} $C^*$ of $C$ to be
\[
C^* = \{ x \in X : \omega(x) \cap C = \emptyset \} \, .
\]
\end{defn}
}
\begin{lem}\label{lem:dual}
Let $(A, U)$ be an asymptotically compact attractor pair. Then, $A^*$ is the repeller corresponding to the prerepeller { $V$ defined by
\begin{align}\label{eq:definePrerepellerV}
V := X \setminus \overline{\psi([T, \infty) \times U)} \, ,
\end{align}
where $T \geq 0$ is as in the definition of preattractor for $U$ (i.e., $\overline{\psi([T, \infty) \times U)} \subset U$).} In particular, $A^*$ is closed. Moreover we have $A \cap A^* = \emptyset$.
\end{lem}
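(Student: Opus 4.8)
The plan is to verify four things: (i) the set $V$ in \eqref{eq:definePrerepellerV} is a genuine prerepeller; (ii) its associated repeller $R$ equals $A^*$; (iii) $A^*$ is closed; and (iv) $A \cap A^* = \emptyset$. I would start with (iv) and (iii) since they are quick: if $x \in A$ then $\omega(x)$ is nonempty (by Lemma \ref{lem:asympCompact}(c), $A = \psi^t A$ for all $t$, so picking preimages $\psi^{-t_n}x$ and applying asymptotic compactness of $(A,U)$ forces a limit point, which lies in $A$ by invariance and closedness), hence $\omega(x) \cap A \supset \omega(x) \cap A \ni$ that limit point, so $\omega(x)\cap A \neq \emptyset$ and $x \notin A^*$; this gives $A \cap A^* = \emptyset$. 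Closedness of $A^*$ then follows for free once (ii) is established, since repellers are closed by Lemma \ref{lem:repellerInvariant} — though I would note it also follows directly from Lemma \ref{lem:attractor1}: if $x_n \to x$ with each $x_n \in A^*$ but $x \notin A^*$, then $\omega(x) \cap A \neq \emptyset$, so some $\psi^{t}x \in U$; by continuity $\psi^t x_n \in U$ for large $n$, whence $\omega(x_n) = \omega(\psi^t x_n) \subset \omega(U) = A$, contradicting $x_n \in A^*$.

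For (i), I would use Lemma \ref{lem:asympCompact2}: pick $\e > 0$ so that $B_\e(A) \subset U$ (possible since $\overline{\psi([T,\infty)\times U)} \subset U$ and $A$ is compact, or simply because $A \subset U$), then choose $T' \geq T$ with $\overline{\psi([T',\infty)\times U)} \subset B_\e(A)$. The point is to show $\overline{\cup_{t \geq S}(\psi^t)^{-1}(V)} \subset V$ for a suitable $S$. Writing $W = \overline{\psi([T,\infty)\times U)}$ so that $V = X \setminus W$, one has $(\psi^t)^{-1}(V) = X \setminus (\psi^t)^{-1}(W) \supset X \setminus (\psi^t)^{-1}(U)$... the cleanest route: show that for $t$ large, $(\psi^t)^{-1}(V) \subset V$ with room to spare. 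If $y \notin V$, i.e. $y \in W$, then $y = \lim \psi^{s_n} u_n$ with $s_n \geq T$, $u_n \in U$; then $\psi^t y = \lim \psi^{t+s_n}u_n \in W$, and in fact for $t$ large $\psi^t y \in B_{\e}(A) \subset U$ and moreover $\psi^{t}y = \lim \psi^{t + s_n} u_n$ with $t + s_n \geq T$ exhibits $\psi^t y \in W$; iterating and using Lemma \ref{lem:asympCompact2} gives $\psi^{2t}y \in \overline{\psi([T,\infty)\times U)}$, so $\psi^{2t} y \notin V$, which is exactly $y \notin (\psi^{2t})^{-1}(V)$. Taking closures: one must check $\overline{\cup_{t\geq S}(\psi^t)^{-1}(V)}$ still misses an open neighborhood of $W$; this is where the $\e$-margin from Lemma \ref{lem:asympCompact2} is used, so I would phrase the argument as: for $t$ large, $(\psi^t)^{-1}(V) \subset X \setminus \overline{\psi([T,\infty)\times B_{\e/2}(A))}$, the latter being an open set whose closure is still contained in $V$ by compactness of $A$ and the attraction estimate.

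For (ii), the inclusion $R \subset A^*$: if $x = \lim \psi^{-t_n}x_n$ with $x_n \in V$, $t_n \to \infty$, suppose for contradiction $x \notin A^*$, so $\omega(x) \cap A \neq \emptyset$; by Lemma \ref{lem:attractor1} then $\omega(x) \subset A$, so $\psi^s x \in U$ for some $s \geq 0$, hence (taking $s$ a bit larger) $\psi^s x \in \overline{\psi([T,\infty)\times U)}$ with margin, so $\psi^{-t_n}x_n$ is close to $x$ forces $\psi^{s}\psi^{-t_n}x_n = \psi^{-(t_n - s)}x_n$ to lie in $\overline{\psi([T,\infty)\times U)}$ for large $n$; pushing forward by $t_n - s$ (which is legitimate as it is a genuine preimage) recovers $x_n \in \psi([t_n - s + T,\infty)\times U)$... more precisely $\psi^{t_n - s}(\psi^{-(t_n-s)}x_n) = x_n$ lands in $\overline{\psi([T,\infty)\times U)} = X \setminus V$, contradicting $x_n \in V$ — here I need $t_n - s + T \geq T$, fine. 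Conversely $A^* \subset R$: given $x$ with $\omega(x) \cap A = \emptyset$, I want to produce preimages drifting into $V$. Since $\omega(x)$ avoids $A$ and (by asymptotic compactness applied along the forward orbit, if it exists) ... actually the subtlety is that $x$ need not have preimages. The resolution: one shows $\psi^t x \notin U$ for all large $t$ — otherwise $\psi^t x \in U$ gives $\omega(x) \subset \omega(U) = A$, contradiction — hence $\psi^t x \in V$ for all large $t$, and then writing $y_t := \psi^t x$ we have $\psi^{-t}y_t = x$ exists trivially, so taking $t_n \to \infty$, $x_n := y_{t_n} \in V$, we get $x = \psi^{-t_n}x_n \to x$, exhibiting $x \in \omega^*(V) = R$. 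This last step is clean and I expect the genuine obstacle to be (i) and the forward direction of (ii): carefully managing the closures and the $\e$-margins so that one does not merely get $(\psi^t)^{-1}(V) \subset \overline{V}$ but the strict containment defining a prerepeller, and ensuring the "drift into $U$ after finite time" dichotomy ($\psi^t x \in U$ eventually, or $\psi^t x \in V$ eventually — which is exactly what $\omega(x) \cap A = \emptyset$ versus $\neq \emptyset$ delivers via Lemma \ref{lem:attractor1} and Lemma \ref{lem:asympCompact2}). I would organize the write-up around the forward-invariant/limit-set characterizations $A = \omega(U)$ and $R = \omega^*(V)$ to keep the bookkeeping transparent.
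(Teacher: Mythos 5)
Your handling of parts (ii), (iii), and (iv) is sound and close in spirit to the paper's proof: showing $A^* \subset R$ by observing $\psi^t x \notin U$ for all $t$, and showing $R \subset A^*$ by deriving a contradiction from $\omega(x)\cap A \neq \emptyset$, are exactly the paper's steps (the paper uses the forward invariance $\psi^t R \subset R \subset V$ from Lemma \ref{lem:repellerInvariant} where you use the representation $x = \lim \psi^{-t_n}x_n$, but both work). However, your proof of part (i) --- that $V = X \setminus W$ with $W := \overline{\psi([T,\infty)\times U)}$ is a prerepeller --- contains a genuine error. You claim that $O := X \setminus \overline{\psi([T,\infty)\times B_{\e/2}(A))}$ has $\overline{O}\subset V$, ``by compactness of $A$ and the attraction estimate.'' This is false. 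Take $X = \R$, $\psi^t(x) = e^{-t}x$, $U = (-1,1)$, $T=1$, so $A = \{0\}$, $W = [-e^{-1},e^{-1}]$, and $V = \{|x| > e^{-1}\}$. Then $O = \{|x| > e^{-1}\e/2\}$, and $\overline{O}$ contains the point $e^{-1}\e/2$, which is not in $V$ whenever $\e < 2$. The trouble is that $\overline{\psi([T,\infty)\times B_{\e/2}(A))}$ is much smaller than $W$, so its complement is larger than $V$ --- shrinking $U$ to $B_{\e/2}(A)$ was the wrong move.

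The idea you need is simpler and you actually have all the ingredients: since $W\subset U$ by the preattractor hypothesis, we have $X = U\cup V$ with $U$ open, so $X\setminus U$ is a \emph{closed} subset of $V$. Now for $t\geq T$ one has $(\psi^t)^{-1}(V)\subset X\setminus U$ (if $x\in U$ then $\psi^t x\in W = X\setminus V$), and hence $\overline{\cup_{t\geq T}(\psi^t)^{-1}(V)}\subset X\setminus U\subset V$. Equivalently, as in the paper: suppose $v_n\to v$ with $v_n\in(\psi^{t_n})^{-1}(V)$, $t_n\geq T$; if $v\notin V$ then $v\in U$, and $U$ open forces $v_n\in U$ eventually, whence $\psi^{t_n}v_n\in\psi([T,\infty)\times U)\subset W = X\setminus V$, contradicting $\psi^{t_n}v_n\in V$. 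Your instinct to exhibit an ``intermediate'' set containing all $(\psi^t)^{-1}(V)$ whose closure lies in $V$ is exactly right; the correct choice is simply $X\setminus U$, not a shrunken tube around $A$. A smaller point: your argument for $A\cap A^*=\emptyset$ via ``picking preimages $\psi^{-t_n}x$'' conflates backward and forward limit sets; the clean version is that $x\in A\subset U$ immediately gives $\omega(x)\neq\emptyset$ and $\omega(x)\subset A$ by Lemma \ref{lem:asympCompact}(b), so $x\notin A^*$. (The paper instead notes $A\subset W$ and $A^*\subset V$ directly, which is slicker still.)
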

\noindent In light of Lemma \ref{lem:dual}, we are justified in referring to $A^*$ as the {\bf repeller dual} of $A$.
\begin{proof}
Note that $V$ is open and $X = U \cup V$. We claim that $V$ is a prerepeller with repeller $A^*$.

We first show that $V$ is a prerepeller; it suffices to show that $\overline{\cup_{t \geq T} (\psi^t)^{-1}(V)} \subset V$, where $T$ is as above. For this, let $\{v_n\} \subset \cup_{t \geq T} (\psi^t)^{-1}(V)$ be a sequence converging to a point $v \in \overline{\cup_{t \geq T} (\psi^t)^{-1} (V)}$; for each $n$ let $t_n \geq T$ be such that $\psi^{t_n} v_n \in V$. 

If $v \notin V$, then $v \in U$, and so $v_n \in U$ for $n$ sufficiently large. But then $\psi^{t_n} v_n \in \psi^{t_n} U \subset \overline{\psi([T, \infty) \times U)}$, contradicting the assumption that $\psi^{t_n} v_n \in V$ for all $n$. Thus all such limit points $v$ belong to $V$, and we conclude that $V$ is a prerepeller.

Let $R$ be the repeller corresponding to $V$; we now show that $R = A^*$. To show $R \subset A^*$, let $v \in R$ and assume for the sake of contradiction that $\omega(v) \cap A \neq \emptyset$. Then there is a sequence of times $t_n \to \infty$ for which $\psi^{t_n} v \to x^*$ for some $x^* \in A$. In particular, $\psi^{t_n} v \in U$ for $n$ sufficiently large, and so $\psi^{t_n} v \in \overline{\psi([T, \infty) \times U)}$ on taking $n$ sufficiently larger. We conclude that $\psi^{t_n} v \notin V$ for such $n$. On the other hand, by Lemma \ref{lem:repellerInvariant}, $\psi^t v \in R \subset V$ for all $t$, and so we have a contradiction. Thus $R \subset A^*$.

To show $A^* \subset R$, let $v \in A^*$. Observe, then, that $\psi^t v \notin U$ for any $t \geq 0$ by asymptotic compactness-- otherwise, $\omega(v) \cap A$ would be nonempty by asymptotic compactness. It follows that $\psi^t v \in V$ for all $t \geq 0$, i.e., $v \in (\psi^t)^{-1}(V)$ for all $t \geq 0$. Thus $v \in R$ by construction; this completes the proof of $A^* = R$.

The fact that $A \cap A^* = \emptyset$ follows from the fact that $A^* \subset V$ as above and that $A \subset \overline{\psi([T, \infty) \times U)} = X \setminus V$.
\end{proof}

\subsubsection*{Properties of the repeller dual}

Although $A^*$ may be empty, the exterior of a neighborhood of $A$ always `attracts' trajectories in backwards time in the sense of preimages.
\begin{lem}\label{lem:repellersRepel}
Let $(A, U)$ be an asymptotically compact attractor pair, and let $\e > 0$ be sufficiently small. Define $V_\e = \{x \in X : d(x, A) > \e\}$. Then, $V_\e$ is a prerepeller, and $(A^*, V_\e)$ is a repeller pair.

\end{lem}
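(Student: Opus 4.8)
The plan is to reduce to Lemma \ref{lem:dual} by comparing the two candidate prerepellers $V_\e = \{x : d(x,A) > \e\}$ and $V := X \setminus \overline{\psi([T,\infty) \times U)}$, where $T$ is as in the definition of preattractor for $U$. First I would invoke Lemma \ref{lem:asympCompact2}: since $(A,U)$ is asymptotically compact, for every $\e > 0$ there is $T_\e > T$ such that $\overline{\psi([T_\e, \infty) \times U)} \subset B_\e(A)$, and hence $V_\e \subset X \setminus B_\e(A) \subset X \setminus \overline{\psi([T_\e,\infty) \times U)}$. Symmetrically, for $\e$ small enough that $\overline{B_\e(A)} \subset U$ (possible since $A$ is compact and contained in the open set $U$), we also get the reverse-type containment $\overline{\psi([T,\infty)\times U)} \subset U$ does not directly give $V \subset V_\e$, so the two open sets are genuinely different; the point is only that they have the same associated repeller.

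Next I would verify directly that $V_\e$ is a prerepeller, mimicking the argument in Lemma \ref{lem:dual}. Fix $\e$ small enough that $\overline{B_{2\e}(A)} \subset U$. Using Lemma \ref{lem:asympCompact2}, choose $S > 0$ so that $\overline{\psi([S,\infty) \times U)} \subset B_\e(A)$. I claim $\overline{\cup_{t \geq S}(\psi^t)^{-1}(V_\e)} \subset V_\e$. Suppose $v_n \to v$ with $\psi^{t_n} v_n \in V_\e$ for some $t_n \geq S$; if $v \notin V_\e$ then $d(v,A) \leq \e$, so $v_n \in B_{2\e}(A) \subset U$ for large $n$, whence $\psi^{t_n} v_n \in \psi([S,\infty)\times U) \subset B_\e(A)$, contradicting $\psi^{t_n}v_n \in V_\e = X \setminus \overline{B_\e(A)}$. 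Wait — I must be careful that $V_\e = \{d(\cdot,A)>\e\}$ versus $B_\e(A)^c = \{d(\cdot,A) \geq \e\}$; replacing $B_\e(A)$ by $B_{\e/2}(A)$ in the choice of $S$ fixes this, giving $\psi^{t_n}v_n \in B_{\e/2}(A) \subset \{d(\cdot,A) \leq \e\}$, the desired contradiction. Hence $V_\e$ is a prerepeller.

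It then remains to show the repeller $R_\e$ associated with $V_\e$ equals $A^*$. By Lemma \ref{lem:dual}, $A^* = R$, the repeller associated with $V$. Using the $\omega^*$-characterization of repellers (or arguing as in Lemma \ref{lem:dual}), I would show $R_\e = \{v : \psi^t v \in V_\e \text{ for all } t \geq 0\} \cup (\text{appropriate limit points})$ and that this coincides with $A^* = \{v : \omega(v) \cap A = \emptyset\}$: if $\omega(v) \cap A = \emptyset$ then by asymptotic compactness $\psi^t v \notin U$ for all $t$ (as in Lemma \ref{lem:dual}, since otherwise $\psi^t v \in U$ forces a subsequential limit in $\omega(U) = A$), and $X \setminus U \subset X \setminus \overline{B_\e(A)} \subset V_\e$, so $v \in R_\e$; conversely if $v \in R_\e$ then $\psi^t v \in V_\e$ for all $t$ by Lemma \ref{lem:repellerInvariant}, so no forward limit point lies in $\overline{B_\e(A)} \supset A$, giving $\omega(v) \cap A = \emptyset$. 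Therefore $(A^*, V_\e) = (R_\e, V_\e)$ is a repeller pair. The main obstacle is bookkeeping the strict-versus-weak inequalities in the definitions of $V_\e$ and $B_\e(A)$ and choosing the auxiliary radii ($\e/2$ vs. $\e$ vs. $2\e$) and times ($T$, $S$, $T_\e$) consistently; the conceptual content is entirely carried by Lemmas \ref{lem:asympCompact2}, \ref{lem:repellerInvariant}, and \ref{lem:dual}.
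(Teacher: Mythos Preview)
Your proposal is correct and follows essentially the paper's approach: choose $\e$ small so that $B_{2\e}(A)\subset U$, use Lemma~\ref{lem:asympCompact2} to show $V_\e$ is a prerepeller, and then verify $R_\e=A^*$ by combining forward invariance with asymptotic compactness. Two minor remarks: the opening paragraph comparing $V_\e$ with the $V$ of Lemma~\ref{lem:dual} is never used and can be deleted; and your worry about strict versus weak inequalities is unfounded, since $\psi^{t_n}v_n\in B_\e(A)$ gives $d(\psi^{t_n}v_n,A)<\e$, which already contradicts $\psi^{t_n}v_n\in V_\e=\{d(\cdot,A)>\e\}$ without any adjustment of radii. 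Your $R_\e\subset A^*$ argument (via $\psi^t R_\e\subset R_\e\subset V_\e$, hence $d(\psi^t v,A)>\e$ for all $t$) is in fact slightly cleaner than the paper's, which instead uses the $\omega^*$-limit characterization of $R_\e$.
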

\begin{proof}
Fix $\e > 0$ sufficiently small so that $B_{2 \e} (A) \subset U$. To show that $V_\e$ is a prerepeller, assume not for the sake of contradiction: that is, for any $T > 0$, $\overline{\cup_{t \geq T} (\psi^t)^{-1} V_\e } \setminus V_\e \neq \emptyset$. It follows that{  $B_{2 \e}(A) \cap \cup_{t \geq T} (\psi^t)^{-1} V_\e \neq \emptyset$} for all $T$, and so there exists a sequence $t_n \to \infty$ and points $x_n \in B_{2 \e}(A) \subset U$ for which $\psi^{t_n} x_n \notin B_\e(A)$ for all $n$. This contradicts the asymptotic compactness of $(A, U)$.

{ We now check that the repeller 
\[
R_\e = \bigcap_{t \geq 0} \overline{\bigcup_{s \geq t} (\psi^s)^{-1} V_\e }
\]
does, indeed, coincide with $A^*$. To check $R_\e \subset A^*$,  assume that there exists an element $x \in R_\e \setminus A^*$. Let $x = \lim_n \psi^{-t_n} x_n$, where $\{ x_n \} \subset V_\e$ and $t_n \to \infty$. Since $x \notin A^*$, it follows that $\psi^t x \in B_{\e/2}(A)$ for some $t \geq 0$. Fixing such a $t$, we have for all $n$ sufficiently large that $d(\psi^t x, \psi^{t - t_n} x_n) < \e/2$ by continuity, hence $\psi^{t - t_n} x_n \in B_\e(A)$ holds for all such $n$. This is in contradiction to the fact that $V_\e$ is a prerepeller on taking $n$ is large enough so that $t_n - t \geq T$, where $T$ is as in the definition of prerepeller (Definition \ref{defn:prerepeller}). We conclude that $R_\e \subset A^*$.
}

For the other inclusion, let $x \in A^*$ and note that $\psi^t x \notin B_{2\e}(A)$ for all $t$ sufficiently large (since otherwise $\omega(x) \cap A \neq \emptyset$ by asymptotic compactness). Thus $x \in (\psi^t)^{-1} V_\e$ for all large $t$, and so $x \in R_\e$ follows.
\end{proof}

Although we do not assume invertibility of the time-$t$ maps, we do occasionally need to refer to negative trajectories when they do exist. 
\begin{defn}
Let $x \in X$; we say that $x$ admits a \emph{negative continuation} if $\psi^t x$ exists for all $t \leq 0$.
\end{defn}
\noindent By injectivity of the time-$t$ maps, a negative continuation is unique if it exists. 

When $x \in X$ has a negative continuation, we write $\omega^*(x)$ for the backwards limit set of $\{\psi^t x\}_{t \leq 0}$. The following is a consequence of Lemma \ref{lem:repellersRepel}.

\begin{lem}
Let $x \in X \setminus A$, and assume that $x$ has a negative continuation. Then, $\omega^*(x) \subset A^*$.
\end{lem}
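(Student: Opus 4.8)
The plan is to show that every point of $\omega^*(x)$ lies in the repeller dual $A^*$, using the repeller pair characterization $(A^*, V_\e)$ from Lemma \ref{lem:repellersRepel}. First I would fix $\e > 0$ small enough that $B_{2\e}(A) \subset U$ and so that the conclusion of Lemma \ref{lem:repellersRepel} applies, giving that $V_\e = \{z : d(z, A) > \e\}$ is a prerepeller with associated repeller $A^*$. The key preliminary observation is that since $x \notin A$ and $x$ admits a negative continuation, the whole negative trajectory $\{\psi^t x\}_{t \le 0}$ must eventually (in backward time) stay outside $B_{2\e}(A)$: indeed, if $\psi^{-t_n} x \in B_{2\e}(A) \subset U$ along some sequence $t_n \to \infty$, then by injectivity $\psi^{t_n}(\psi^{-t_n} x) = x$, so $x \in \omega(\psi^{-t_n}x$-orbit$)$, and asymptotic compactness of $(A,U)$ would force $\omega$-limit points of these orbit tails to accumulate — more directly, one shows $x \in \omega(U)= A$, contradicting $x \notin A$. (This is the same mechanism used in the proof of Lemma \ref{lem:repellersRepel}.) Hence there is $T_0 \le 0$ with $\psi^t x \in V_\e$ for all $t \le T_0$.

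Next I would take an arbitrary $y \in \omega^*(x)$, so $y = \lim_{k} \psi^{-s_k} x$ for some $s_k \to \infty$. Writing $y$ in the form required by the limit-set description of $R_\e = A^*$: for each $k$, set $x_k := \psi^{-s_k + T_0'} x$ for a suitable shift, where $T_0' \le 0$ is chosen with $T_0' \le T_0$; then $x_k \in V_\e$ and $\psi^{-(s_k + T_0')} x_k$... actually more cleanly, for large $k$ we have $-s_k \le T_0$, hence $\psi^{-s_k} x \in V_\e$ already, and $\psi^{-s_k} x = \psi^{-(s_k - T_0)} (\psi^{T_0} x)$ exhibits $\psi^{-s_k}x$ as a point whose backward iterate under time $s_k - T_0$ of the point $\psi^{T_0}x$... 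I would instead argue directly with Definition \ref{defn:prerepeller}: since $\psi^t x \in V_\e$ for all $t \le T_0$, we have $\psi^{T_0} x \in (\psi^s)^{-1}(V_\e)$ for every $s \ge 0$, so $\psi^{T_0} x \in \bigcap_{t \ge 0}\overline{\bigcup_{s \ge t}(\psi^s)^{-1}(V_\e)} = R_\e = A^*$. More importantly, each $\psi^{-s_k} x$ (for $k$ large) equals $\psi^{-s_k + T_0}(\psi^{T_0}x)$, i.e. it is a backward iterate of the point $\psi^{T_0}x \in X$; and since $\psi^{T_0}x \in V_\e$ too, each $\psi^{-s_k}x$ lies in $\bigcup_{s \ge t}(\psi^s)^{-1}(V_\e)$ for every fixed $t$ once $k$ is large. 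Therefore $y = \lim_k \psi^{-s_k} x$ belongs to $\overline{\bigcup_{s \ge t}(\psi^s)^{-1}(V_\e)}$ for every $t \ge 0$, hence $y \in R_\e = A^*$.

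Since $y \in \omega^*(x)$ was arbitrary, this gives $\omega^*(x) \subset A^*$, as claimed. I expect the main obstacle to be the bookkeeping in the first paragraph: cleanly deducing from asymptotic compactness that a point with a negative continuation lying outside $A$ cannot have its backward orbit return to $B_{2\e}(A)$ infinitely often. One must be careful that $\psi^{-t_n}x$ lies in the preattractor $U$ (which is why $\e$ is chosen with $B_{2\e}(A) \subset U$), and then invoke that $\psi^{t_n}(\psi^{-t_n}x) = x$ together with the fact that $\{\psi^{t_n} z_n\}$ has convergent subsequences for $z_n \in U$, $t_n \to \infty$ — but here the sequence is constant equal to $x$, so the contradiction is actually with $A = \omega(U)$: $x$ would be a limit of $\psi^{t_n} z_n$ with $z_n = \psi^{-t_n}x \in U$, forcing $x \in \omega(U) = A$. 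Everything else is a routine unwinding of the definitions of prerepeller and repeller together with Lemma \ref{lem:repellersRepel}.
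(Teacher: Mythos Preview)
Your argument is correct and follows the same approach as the paper: show that the backward orbit eventually lies in $V_\e$, then conclude from the identification $A^* = R_\e$ in Lemma \ref{lem:repellersRepel}. The paper streamlines the first step by taking $\e = \tfrac12 \dist(x,A)$, so that $x \in V_\e$ holds immediately and the prerepeller property of $V_\e$ yields $\{\psi^t x\}_{t \le -T} \subset V_\e$ directly, without the contradiction argument via asymptotic compactness.
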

\begin{proof}
Note that $\omega^*(x)$ may be empty. If it is not, then apply Lemma \ref{lem:repellersRepel} to $\e = \frac12 \dist(x, A)$ and observe that $\{ \psi^t x\}_{t \leq -T} \subset V_\e$, where $T$ is as in Definition \ref{defn:prerepeller} for $V = V_\e$. Consequently any limit point of $\{ \psi^{-t} x\}$ belongs to $R_\e$, which coincides with $A^*$ by Lemma \ref{lem:repellersRepel}.
\end{proof}

%
%
%
%
%
%

\subsection{Chains, chain recurrence and attactors}\label{subsec:chains}

We complete this section with a brief review of chains and chain recurrence.

Let $\e, T > 0$. For $x,y \in X$ we say that there is an $(\e, T)$-chain from $x$ to $y$ if there is a sequence $x_1, x_2, \cdots, x_{n} \in X$ and times $T_0, T_2, \cdots, T_n \in [T, \infty)$ such that, on setting $x_0 = x, x_{n+1} = y$, we have that $d_X(\psi^{T_i} x_i, x_{i + 1}) < \e$ for all $0 \leq i \leq n$. For a subset $Y \subset X$, we define
\[
\Omega(Y; \e, T) = \{x \in X : \text{ there exists an $(\e, T)$-chain from $y$ to $x$ for some $y \in Y$}\}
\]
and 
\[
\Omega(Y) = \bigcap_{\e >0, T > 0} \Omega(Y; \e, T) \, .
\]


\begin{lem}\label{lem:chain1}
Let $Y \subset X$ be a subset for which $Y \subset U$, where $(A, U)$ is an asymptotically compact attractor pair. Then $\Omega(Y) \subset A$. 
\end{lem}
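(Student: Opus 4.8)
The plan is to show that $\Omega(Y; \e, T) \subset B_\delta(A)$ for a suitable $\delta = \delta(\e) \to 0$ as $\e \to 0$; taking the intersection over all $\e, T > 0$ then gives $\Omega(Y) \subset \overline{A} = A$. The key tool is Lemma \ref{lem:asympCompact2}(b): given $\eta > 0$, choose $T_0 = T_0(\eta)$ so that $\overline{\psi([T_0, \infty) \times U)} \subset B_\eta(A)$. This says that after time $T_0$, any trajectory starting in $U$ lands in the $\eta$-neighborhood of $A$, and since $B_\eta(A) \subset U$ for $\eta$ small (as in the other lemmas of this subsection), once a trajectory enters $B_\eta(A)$ it stays in $U$ and hence keeps being pulled back toward $A$.

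First I would fix $\eta > 0$ small enough that $B_{2\eta}(A) \subset U$, and pick $T_0$ as above for this $\eta$. Now consider an $(\e, T)$-chain $x_0 = y, x_1, \dots, x_{n+1} = x$ with jump times $T_i \geq T$, where we also require $T \geq T_0$ (this is harmless since $\Omega(Y)$ is an intersection over all $T$). Since $x_0 = y \in Y \subset U$ and $T_0 \leq T_1$, we get $\psi^{T_0}x_0 \in B_\eta(A)$, and then $\psi^{T_0 + s} x_0$ remains in $B_\eta(A) \subset U$ for all $s \geq 0$ by forward invariance of $U$ together with Lemma \ref{lem:asympCompact2}(b) applied again; in particular $\psi^{T_0}(x_0) \in U$, so we may treat $\psi^{T_0}(x_0)$ as a new starting point in $U$. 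Hence $\psi^{T_0}x_0 \in B_\eta(A)$, and by continuity of $\psi^{T_0 - T_0} = \psi^{0}$... more precisely: $\psi^{T_0}x_0 \in B_\eta(A)$, and $d_X(\psi^{T_0}x_0, x_1) \leq d_X(\psi^{T_0}x_0, \psi^{T_0}x_0)$ — let me restate. The clean inductive claim is: \emph{if $x_i \in B_\eta(A)$ then $x_{i+1} \in B_{\eta + \e}(A)$}, since $\psi^{T_i}x_i \in B_\eta(A)$ (as $x_i \in B_\eta(A) \subset U$ and $T_i \geq T_0$) and $d_X(\psi^{T_i}x_i, x_{i+1}) < \e$. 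Thus provided $\eta + \e \leq 2\eta$, i.e. $\e \leq \eta$, each $x_{i+1}$ lies in $B_{2\eta}(A) \subset U$, and the induction continues: starting from $x_0 \in U$ we get $\psi^{T_0}x_0 \in B_\eta(A)$, hence $x_1 \in B_{\eta+\e}(A) \subset B_{2\eta}(A) \subset U$, and so on. In particular the endpoint $x = x_{n+1}$ satisfies $x \in B_{\eta + \e}(A) \subset B_{2\eta}(A)$. Therefore, for every $\eta$ with $B_{2\eta}(A) \subset U$, taking $\e \leq \eta$ and $T \geq T_0(\eta)$ gives $\Omega(Y; \e, T) \subset B_{2\eta}(A)$, whence $\Omega(Y) \subset \bigcap_{\eta > 0} B_{2\eta}(A) = A$ by compactness of $A$.

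The main obstacle is the bookkeeping needed to ensure the induction stays inside the preattractor $U$: one must guarantee that each jump $x_{i+1}$ remains in $B_{2\eta}(A) \subset U$ so that Lemma \ref{lem:asympCompact2}(b) can be reapplied at the next step, which forces the constraint $\e \leq \eta$ and the requirement $T \geq T_0(\eta)$. Since $\Omega(Y)$ is defined as an intersection over \emph{all} $\e$ and $T$, both constraints are costless, and the argument closes. A minor point to handle carefully: the first jump leaves $x_0 = y$, which is in $U$ but possibly not in $B_\eta(A)$; this is exactly why we route through $\psi^{T_0}x_0 \in B_\eta(A)$ using $T_0 \leq T_1$ before starting the neighborhood-tracking induction.
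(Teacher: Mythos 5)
Your proof is correct and follows essentially the same route as the paper's: invoke Lemma \ref{lem:asympCompact2}(b) to get a time $T_0$ after which trajectories from $U$ land in a small neighborhood of $A$, then track an $(\e,T)$-chain step by step to see it never leaves that neighborhood. You simply make explicit the bookkeeping (the constraints $\e \le \eta$, $B_{2\eta}(A)\subset U$, and the inductive verification that each $x_i$ stays in $U$) that the paper's one-line argument leaves implicit.
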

\begin{proof}
We will show the following: for any $\e > 0$ and for $T$ sufficiently large (in terms of $\e$), $\Omega(Y; \e, T) \subset B_{2\e}(A)$. To see this, fix $\e > 0$ and let $T^*$ be sufficiently large so that $\overline{\psi([T, \infty) \times U)} \subset B_\e(A)$ for all $T \geq T^*$ (Lemma \ref{lem:asympCompact2}). Let now $x \in U$ be arbitrary-- it now follows that any finite $(\e, T)$ chain initiated at $x$ will terminate in a point $y \in B_{2 \e}(A)$. In particular, $\Omega(Y; \e, T) \subset B_{2 \e}(A)$ for any $Y \subset U$. This completes the proof.
\end{proof}

\begin{rmk}
In \cite{hurley1991chain, hurley1992noncompact, hurley1995chain, choi2002chain}, a more general definition of chain is used. This broadened definition was designed for use in the non-locally-compact setting, and gives rise to equivalent notions of chain recurrence and chain transitivity in the compact setting. We use the `classical' definition here because we only ever consider the chain transitivity of compact subsets.
\end{rmk}


\section{Banach space preliminaries}\label{sec:banachPrelim}

Here we recall some technical preliminaries on Banach space geometry, in particular the `local' Banach space geometry of finite dimensional and finite codimensional subspaces. 
%

\medskip

\noindent {\bf Notation.} Throughout this section, $\Bc$ is a Banach space with norm $|\cdot|$.  The Grassmanian $\Gc(\Bc)$ is defined to be the set of nontrivial closed subspaces of $\Bc$. When $E, F \in \Gc(\Bc)$ and $\Bc = E \oplus F$ is a splitting, we write $\pi_{E \ds F}$ for the projection onto $E$ parallel to $F$ (i.e., $F = \ker (\pi_{E \ds F}), E = \operatorname{Range} (\pi_{E \ds F})$). We say that $E, F$ are complements in $\Bc$.

Note that $\pi_{E \ds F}$ is always a bounded linear operator when $E, F$ are closed and $\Bc = E \oplus F$ (by the Closed Graph Theorem).

\subsection{Grassmanian of closed subspaces}\label{subsec:grassmanian}

The Grassmanian $\Gc(\Bc)$ is endowed with a metric, the Hausdorff distance $d_H$, which for $E, F \in \Gc(\Bc)$ is defined by
\[
d_H(E, F) = \max\big\{ \sup_{e \in S_E} d(e, S_F),  \sup_{f \in S_F} d(f, S_E) \big\} \, ;
\]
here we have written $S_E = \{e \in E : |e| = 1\}$ and analogously for $S_F$.

\medskip

For $d \in \N$, write $\Gc_d(\Bc)$ for the subset of $d$-dimensional subspaces, and $\Gc^d(\Bc)$ for the subset of closed $d$-codimensional subspaces.

\begin{lem}[\cite{kato2013perturbation}]
The metric $d_H$ is a complete metric for $\Gc(\Bc)$. The subsets $\Gc_d(\Bc)$ and $\Gc^d(\Bc)$ are closed in $(\Gc(\Bc), d_H)$ for any $d \in \N$.
\end{lem}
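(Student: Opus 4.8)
The plan is to prove the two halves separately. For completeness of $(\Gc(\Bc), d_H)$, the essential point is that $\Gc(\Bc)$ can be identified with a closed subset of a complete metric space. First I would recall the standard fact (see Kato, \cite{kato2013perturbation}, Ch.~IV) that if $\{E_n\}$ is a $d_H$-Cauchy sequence in $\Gc(\Bc)$, then the set $E := \{ v \in \Bc : \exists v_n \in E_n, v_n \to v\}$ is a closed linear subspace: linearity follows because for $v, w \in E$ with approximants $v_n, w_n$ one has $\alpha v_n + \beta w_n \in E_n$ and $\alpha v_n + \beta w_n \to \alpha v + \beta w$; closedness follows by a diagonal argument. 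Then I would verify that $E$ is nontrivial (so $E \in \Gc(\Bc)$) using that $S_{E_n}$ is nonempty and, by the Cauchy property, the unit spheres do not ``escape to zero''. Finally one checks $d_H(E_n, E) \to 0$: the bound $\sup_{e \in S_E} d(e, S_{E_n}) \to 0$ follows by approximating a unit vector of $E$ by elements of $E_n$ and renormalizing, and $\sup_{f \in S_{E_n}} d(f, S_E) \to 0$ follows from an $\epsilon/3$-type argument using $d_H(E_n, E_m) < \epsilon$ for large $m$ together with a subsequence extraction producing limit points in $E$. I expect this limit-space construction to be the main obstacle, primarily because one must carefully control unit spheres rather than subspaces themselves and guard against degeneracy of the limit.

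For closedness of $\Gc_d(\Bc)$, I would argue that the dimension function is lower semicontinuous and, on $\Gc_d$, also upper semicontinuous along convergent sequences. Concretely, suppose $E_n \in \Gc_d(\Bc)$ and $E_n \to E$ in $d_H$. Picking a normalized basis $\{e_1^{(n)}, \dots, e_d^{(n)}\}$ of $E_n$ which is ``uniformly independent'' — i.e., with a lower bound on the $d$-dimensional parallelepiped volume or, equivalently, with the associated coordinate projections uniformly bounded — one passes to a subsequence so that $e_i^{(n)} \to e_i \in E$ for each $i$; the uniform independence (which survives taking limits) shows $\{e_1, \dots, e_d\}$ is linearly independent, so $\dim E \geq d$. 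Conversely, $\dim E \leq d$ because any $d+1$ unit vectors in $E$ are $d_H$-approximated by vectors in $E_n$, which must be dependent, and a limiting argument forces an approximate dependence relation in $E$; making this rigorous requires the uniform-independence normalization again, so that the ``approximate'' relation does not degenerate. The key technical input here — that a $d_H$-convergent sequence of $d$-dimensional spaces admits uniformly independent bases — is exactly the kind of local Banach-space geometry the section is built to provide, and I would cite or prove it as a small lemma.

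For closedness of $\Gc^d(\Bc)$, the cleanest route is duality: the annihilator map $F \mapsto F^\perp \subset \Bc^*$ sends $\Gc^d(\Bc)$ into $\Gc_d(\Bc^*)$, and one shows this map is continuous (indeed a uniform homeomorphism onto its image) with respect to the Hausdorff metrics on $\Gc(\Bc)$ and $\Gc(\Bc^*)$; then closedness of $\Gc^d(\Bc)$ follows from closedness of $\Gc_d(\Bc^*)$ together with the fact that a $d_H$-limit of closed subspaces is closed (already established in the completeness part). Alternatively, and perhaps more in the spirit of the paper, one can argue directly: if $F_n \in \Gc^d(\Bc)$ with complements $G_n$ of dimension $d$ and $F_n \to F$, then by compactness of $\Gc_d$ (finite-dimensional Grassmannians of a fixed $d$-dimensional model, via the uniform-basis argument) one may assume $G_n \to G \in \Gc_d(\Bc)$, and a quantitative version of the openness of the ``complementary pairs'' condition shows $\Bc = F \oplus G$, whence $\codim F = d$. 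I would present whichever of these is shortest given the lemmas already available in \S\ref{subsec:grassmanian}, and I do not expect this last part to present real difficulty once the finite-dimensional case is in hand.
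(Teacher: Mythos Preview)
The paper does not give a proof of this lemma at all: it is simply quoted from Kato \cite{kato2013perturbation} and used as a black box. So there is no ``paper's proof'' to compare against, and your proposal is necessarily a different route in that it supplies an argument where the paper supplies a citation.

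Your outline for completeness and for closedness of $\Gc_d(\Bc)$ is along standard lines and is essentially the argument in Kato; the upper bound $\dim E \leq d$ is more cleanly obtained by the contrapositive (if $\dim E \geq d+1$, approximate $d+1$ independent unit vectors of $E$ by vectors of $E_n$ and use stability of linear independence under small perturbations), but your version can also be made to work. For $\Gc^d(\Bc)$, your duality approach via $F \mapsto F^\perp$ is correct: this map is an isometry for the gap (Kato, IV.2), so $F_n \to F$ forces $F_n^\perp \to F^\perp$ in $\Gc_d(\Bc^*)$, and closedness of $\Gc_d(\Bc^*)$ gives $\dim F^\perp = d$, hence $\codim F = d$.

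However, your ``alternative'' direct approach for $\Gc^d(\Bc)$ has a real gap. You invoke ``compactness of $\Gc_d$'' to extract a convergent subsequence of the complements $G_n$, but $\Gc_d(\Bc)$ is \emph{not} compact when $\Bc$ is infinite-dimensional (e.g.\ in $\ell^2$ the sequence $\operatorname{Span}\{e_n\}$ has no convergent subsequence). The uniform bound $|\pi_{F_n \ds G_n}| \leq \sqrt{d}+1$ from Lemma \ref{lem:compExist} controls the angle between $F_n$ and $G_n$ but gives no compactness for $\{G_n\}$ on its own. If you want a direct argument, you must instead build the complement to $F$ intrinsically (e.g.\ choose $G$ complementing $F$ with $|\pi_{F \ds G}|$ bounded, then show $G$ also complements $F_n$ for large $n$ via Lemma \ref{lem:compOpen}, forcing $\codim F_n = \dim G$); but as written, the subsequence extraction fails. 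Stick with the annihilator argument.
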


\medskip

For computations it is simpler to work with the \emph{gap} between subspaces, defined by
\[
\Gap(E, F) = \sup_{e \in S_E} d(e, F) \, ;
\]
then, 
\begin{align}\label{eq:gapEquiv}
\frac12 d_H(E, F) \leq \max\{\Gap(E, F), \Gap(F, E)\} \leq d_H(E, F) \, .
\end{align}
For proof, see \cite{kato2013perturbation}.

The following Lemma makes computations involving $\Gap$ simpler when one works with finite dimensional or codimensional subspaces.

\begin{lem}[Lemma 2.6 in \cite{Blumenthal20162377}] \label{lem:symmCloseness} Let $d \in \N$.
\begin{itemize}
\item[(a)] Let $E, E' \in \Gc_d(\Bc)$. Then \[ \Gap(E', E) \leq \frac{d \Gap(E, E')}{ 1- d \Gap(E, E')}\] whenever the denominator of the right-hand side is positive.
\item[(b)] Let $F, F' \in \Gc^d(\Bc)$. Then \[ \Gap(F', F) \leq \frac{d \Gap(F, F')}{ 1- d \Gap(F, F')}\] whenever the denominator of the right-hand side is positive.
\end{itemize}
\end{lem}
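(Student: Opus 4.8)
\textbf{Proof proposal for Lemma~\ref{lem:symmCloseness}.}
The plan is to prove the two statements by essentially the same argument; by duality it actually suffices to prove one of them, but let me describe the direct approach for part~(a) and then explain the translation for part~(b). Fix $E, E' \in \Gc_d(\Bc)$ and set $\delta = \Gap(E, E')$, so that for every unit vector $e \in E$ there is a vector $\pi(e) \in E'$ with $|e - \pi(e)| \leq \delta$. The idea is to estimate $\Gap(E', E)$ by taking an arbitrary unit vector $e' \in E'$, expressing it (or approximating it) in terms of a near-orthonormal system built from $E$, and then perturbing that system back into $E$ at a controlled cost. Concretely, I would pick an Auerbach (or just a well-conditioned) basis $\{e_1, \dots, e_d\}$ of $E$ — unit vectors whose associated coordinate functionals on $E$ have norm $1$ — and push it into $E'$ via the near-projection to get vectors $e_j' = \pi(e_j)$ with $|e_j - e_j'| \leq \delta$. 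The span of $\{e_1', \dots, e_d'\}$ is contained in $E'$, and if $d\delta < 1$ the perturbation is small enough that these remain linearly independent, hence span all of $E'$ (both spaces being $d$-dimensional).

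Given a unit $e' \in E'$, write $e' = \sum_j c_j e_j'$ and compare with the vector $\sum_j c_j e_j \in E$. The error is $\big| \sum_j c_j(e_j - e_j')\big| \leq \delta \sum_j |c_j|$, so the whole scheme hinges on bounding $\sum_j |c_j|$ in terms of $|e'| = 1$. Here is the step I expect to be the only place requiring care: one controls the coefficients by first noting $e' = \sum_j c_j e_j + \big(\sum_j c_j(e_j' - e_j)\big)$, applying the $j$-th coordinate functional $\lambda_j$ of the Auerbach basis (extended to $\Bc$ with norm $1$ by Hahn--Banach) to both sides, and getting $|c_j| \leq |\lambda_j(e')| + \delta \sum_k |c_k| \leq 1 + \delta \sum_k |c_k|$. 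Summing over $j$ gives $\sum_j |c_j| \leq d + d\delta \sum_k |c_k|$, whence $\sum_j |c_j| \leq d/(1 - d\delta)$ provided $d\delta < 1$. Plugging this back in, $d(e', E) \leq \delta \sum_j |c_j| \leq d\delta/(1 - d\delta)$, and taking the supremum over unit $e' \in E'$ yields exactly $\Gap(E', E) \leq d\Gap(E,E')/(1 - d\Gap(E,E'))$, as claimed.

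For part~(b), the cleanest route is to reduce to part~(a) by passing to annihilators in $\Bc^*$: if $F \in \Gc^d(\Bc)$ then $F^\perp \in \Gc_d(\Bc^*)$, and there is a standard comparison (see \cite{kato2013perturbation}) relating $\Gap(F, F')$ in $\Bc$ to $\Gap(F'^\perp, F^\perp)$ in $\Bc^*$ up to the constants already absorbed in \eqref{eq:gapEquiv}-type estimates. Alternatively, one can repeat the finite-dimensional bookkeeping argument above directly, working with the $d$-dimensional quotients $\Bc/F$ and $\Bc/F'$: a unit functional vanishing on $F'$ is approximated by a functional vanishing on $F$ with the same kind of coefficient estimate, since $\Gc^d$-closeness means the finite-dimensional annihilators are Hausdorff-close. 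Either way the combinatorial heart is the same $\sum_j|c_j| \leq d/(1-d\delta)$ bound. The main obstacle throughout is simply keeping the constant sharp — a naive argument gives a worse dependence on $d$ — and this is exactly what the Auerbach basis plus Hahn--Banach coordinate functionals are designed to fix; everything else is routine, and in any case this is quoted from \cite{Blumenthal20162377} so I would cite the computation there rather than reproduce it in full.
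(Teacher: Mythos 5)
The paper does not prove this lemma; it is quoted verbatim from Lemma 2.6 of \cite{Blumenthal20162377}, so there is no internal proof to compare against. Your reconstruction of part (a) is correct: the Auerbach basis together with Hahn--Banach--extended coordinate functionals $\lambda_j$ of norm $1$ gives exactly the estimate $|c_k| \leq 1 + \delta\sum_j|c_j|$, hence $\sum_j|c_j|\leq d/(1-d\delta)$, and the same inequality also justifies the linear-independence claim (if $\sum c_j e_j' = 0$ with $\max_j|c_j|=1$, applying the corresponding $\lambda_k$ forces $1\leq d\delta$, a contradiction). This is a clean argument and gives precisely the claimed bound.

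For part (b), your hedge about duality is the one place that deserves tightening. You write that the annihilator comparison holds ``up to the constants already absorbed in \eqref{eq:gapEquiv}-type estimates,'' but if any constants entered, your reduction would \emph{not} recover the stated fractional-linear bound with the sharp factor $d$. Fortunately no constants are needed: for closed subspaces $M,N\subset\Bc$ one has the exact identity $\Gap(M,N)=\Gap(N^\perp,M^\perp)$, which follows from two applications of Hahn--Banach,
\[
\Gap(M,N)=\sup_{u\in S_M}\dist(u,N)=\sup_{u\in S_M}\sup_{f\in N^\perp,\,|f|\leq1}|f(u)|=\sup_{f\in N^\perp,\,|f|\leq 1}\|f|_M\|_{M^*}=\Gap(N^\perp,M^\perp),
\]
where the last equality uses $\dist(f,M^\perp)=\|f|_M\|_{M^*}$. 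With this identity, applying part (a) to $F'^\perp,F^\perp\in\Gc_d(\Bc^*)$ immediately yields part (b) with the same constant. So your outline is right and the constant is sharp; you should just state the exact duality rather than leave it as ``up to constants,'' since the sharpness of $d$ is the whole point of the lemma.
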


\subsubsection{Complementation in $\Gc_d(\Bc), \Gc^d(\Bc)$; angles between subspaces}

Not every closed subspace of a Banach space possesses a closed complement. However, for finite dimensional and closed finite codimenisonal subspaces, we have the following.
\begin{lem}[III.B.10 and III.B.11 in \cite{wojtaszczyk1996banach}] \label{lem:compExist} Let $d \in \N$.
\begin{itemize}
\item For any $E \in \Gc_d(\Bc)$, there exists a subspace $F \in \Gc^d(\Bc)$ complementing $E$ for which $|\pi_{E \ds F}| \leq \sqrt d$.
\item For any $F \in \Gc^d(\Bc)$, there exists $E \in \Gc_d(\Bc)$ complementing $F$ for which $|\pi_{F \ds E}| \leq \sqrt d + 1$.
\end{itemize}
\end{lem}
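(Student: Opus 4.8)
Both statements are instances of the Kadec--Snobar theorem: every $d$-dimensional subspace $Y$ of a Banach space $Z$ is the range of a bounded projection $Z \to Y$ of norm at most $\sqrt d$ (see \cite{wojtaszczyk1996banach}). The plan is to recall this fact and then read off each part. Equip $Y$ with the Euclidean norm $\|\cdot\|_2$ whose unit ball is the John ellipsoid of $\{y \in Y : |y| \le 1\}$, so that $|y| \le \|y\|_2 \le \sqrt d\,|y|$ for all $y \in Y$. John's decomposition of the identity yields contact points $y_1, \dots, y_m \in Y$ with $|y_j| = \|y_j\|_2 = 1$ and weights $c_j > 0$ with $\sum_j c_j = d$ and $\sum_j c_j \langle \cdot, y_j\rangle_2\, y_j = \Id_Y$. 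At each contact point the supporting functional of the unit ball of $Y$ is forced to coincide with $\langle \cdot, y_j\rangle_2 \in Y^*$, which has $|\cdot|$-operator norm $1$; extend it by Hahn--Banach to $\phi_j \in Z^*$ with $|\phi_j| \le 1$. Then $P := \sum_j c_j \phi_j(\cdot)\, y_j$ restricts to $\Id_Y$, and since $\phi_j|_Y = \langle \cdot, y_j\rangle_2$, a Cauchy--Schwarz estimate in the weights $c_j$ gives $\|Pz\|_2^2 = \sum_j c_j \phi_j(z)\,\phi_j(Pz) \le \big(\sum_j c_j \phi_j(z)^2\big)^{1/2}\|Pz\|_2 \le \sqrt d\,|z|\,\|Pz\|_2$, whence $|Pz| \le \|Pz\|_2 \le \sqrt d\,|z|$. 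For part (a), apply this with $Z = \Bc$, $Y = E$, set $F := \ker P \in \Gc^d(\Bc)$, and note $P = \pi_{E \ds F}$.

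For part (b), fix $F \in \Gc^d(\Bc)$ and let $q : \Bc \to \Bc/F$ be the quotient map. Since $\pi_{F \ds E} = \Id - \pi_{E \ds F}$, it suffices to produce a complement $E \in \Gc_d(\Bc)$ of $F$ with $|\pi_{E \ds F}|$ at most (arbitrarily close to) $\sqrt d$. The idea is to run the John construction not in $\Bc$ but in the dual: $F^\perp \subset \Bc^*$ is $d$-dimensional and, because $\Bc/F$ is finite-dimensional hence reflexive, one has the natural identification $(F^\perp)^* = (\Bc/F)^{**} = \Bc/F$. Applying the construction above to $Y = F^\perp \subset Z = \Bc^*$ gives contact points $h_1, \dots, h_m \in F^\perp$ with $|h_j|_{\Bc^*} = \|h_j\|_2 = 1$, weights $c_j > 0$ with $\sum_j c_j = d$ and $\sum_j c_j \langle \cdot, h_j\rangle_2\, h_j = \Id_{F^\perp}$, where now each supporting functional $\langle \cdot, h_j\rangle_2$ is a genuine element of $(F^\perp)^* = \Bc/F$ of quotient norm $1$. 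Given $\e > 0$, lift it to $u_j \in \Bc$ with $q u_j = \langle \cdot, h_j\rangle_2$ and $|u_j|_\Bc \le 1 + \e$, and set $P := \sum_j c_j h_j(\cdot)\, u_j : \Bc \to \Bc$. Using the decomposition of the identity on $F^\perp$ one checks $q \circ P = q$, from which $P^2 = P$ and $\ker P = F$ follow, so $P = \pi_{E \ds F}$ for $E := \operatorname{Range} P \in \Gc_d(\Bc)$. For the norm, duality and Cauchy--Schwarz in the weights give $|Px| \le \big(\sum_j c_j h_j(x)^2\big)^{1/2}\sup_{|\psi| \le 1}\big(\sum_j c_j \psi(u_j)^2\big)^{1/2}$; the dual form of John's identity and the inclusion of the ellipsoid in the unit ball of $F^\perp$ bound the first factor by $\|qx\|_{\Bc/F} \le |x|$, while $\sum_j c_j \psi(u_j)^2 \le (1+\e)^2\sum_j c_j = (1+\e)^2 d$. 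Hence $|\pi_{E \ds F}| \le (1+\e)\sqrt d$ and $|\pi_{F \ds E}| = |\Id - P| \le 1 + (1+\e)\sqrt d$; letting $\e \downarrow 0$ (replacing the fixed contact data, which do not depend on $\e$, and passing to a limiting complement, or simply choosing $\e$ small) yields the bound $\sqrt d + 1$.

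The main obstacle is entirely in part (b): obtaining a projection on $\Bc$ itself — rather than merely on $\Bc^*$ — carrying a $\sqrt d$-type estimate. This is exactly where the finite-dimensionality of $F^\perp$ is used, since it is what lets us realize the John contact functionals of $F^\perp$ as honest elements of $\Bc/F$ that can be lifted into $\Bc$ (equivalently, it is what makes the dual projection weak-$*$ continuous, hence an adjoint). A secondary technicality is that the quotient norm on $\Bc/F$ need not be attained, which is why the natural construction produces the constant $(1+\e)\sqrt d$ for every $\e > 0$ and is responsible for the extra summand in the bound $\sqrt d + 1$ for $|\pi_{F \ds E}|$.
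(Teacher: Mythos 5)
The paper itself does not prove this lemma --- it is cited from Wojtaszczyk (III.B.10, III.B.11) --- so there is no in-text proof to compare against; what follows evaluates your argument on its own merits. Your proof of the first bullet is the standard Kadec--Snobar argument via John's ellipsoid, and it is correct: the inscribed ellipsoid gives $|\cdot| \le \|\cdot\|_2 \le \sqrt d\, |\cdot|$ on $E$, the decomposition of the identity produces contact points and weights, the contact functionals are norm-one on $E$ and extend by Hahn--Banach, and the weighted Cauchy--Schwarz estimate yields $|P| \le \sqrt d$; setting $F := \ker P \in \Gc^d(\Bc)$ finishes it.

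Your proof of the second bullet is set up correctly and the algebra checks out (the identification $(F^\perp)^* = \Bc/F$, the verification $q \circ P = q$ hence $P^2 = P$ and $\ker P = F$, and the two Cauchy--Schwarz factors), but the argument as written establishes only: for every $\e > 0$ there is a complement $E_\e$ of $F$ with $|\pi_{E_\e \ds F}| \le (1+\e)\sqrt d$, hence $|\pi_{F \ds E_\e}| \le 1 + (1+\e)\sqrt d$. The claimed removal of $\e$ by ``passing to a limiting complement'' is a genuine gap: the lifts $u_j^\e$ are merely bounded in $\Bc$, and bounded sets in a general separable Banach space have no compactness in any topology strong enough to pass the rank-$d$ projections to a limit (separability gives weak-$*$ sequential compactness in $\Bc^*$, not weak compactness in $\Bc$); and the quotient-norm infimum that governs $\e$ is typically not attained, for instance with $\Bc = c_0$, $\phi = (2^{-n})_n \in \ell^1$, $F = \ker\phi$. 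Nor does ``simply choosing $\e$ small'' help, since $1 + (1+\e)\sqrt d > \sqrt d + 1$ for every $\e > 0$. A small misstatement in your closing remarks: the ``$+1$'' in $\sqrt d + 1$ comes from $|\Id - P| \le 1 + |P|$, not from non-attainment of the quotient infimum --- the latter is what produces the multiplicative factor $(1+\e)$. For the paper's actual uses of Lemma~\ref{lem:compExist} (a uniform basis constant $C_d$ as in Lemma~\ref{lem:compactSubbundle}, and the complements in Lemma~\ref{lem:repellerComplements}) the extra $\e$ is immaterial, so your argument is fit for purpose there; but as a proof of the stated bound $\sqrt d + 1$ the final limiting step is not justified.
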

\noindent Lemma \ref{lem:compExist} can be used to produce `good' bases of finite-dimensional spaces: for any $d \in \N$, there is a constant $C_d > 0$ such that for any $d$-dimensional subspace $E \subset \Bc$, there is a basis $v_1, \cdots, v_d$ of unit vectors for which
\[
N[v_1, \cdots, v_d] := \sum_{i = 1}^d |\pi_{\langle v_i \rangle \ds \langle v_j : j \neq i \rangle }| 
\]
satisfies $N[v_1, \cdots, v_d] \leq C_d$.

\smallskip

It is sometimes useful to consider an analogue of the notion of \emph{angle} between subspaces of a Banach space. The following is a standard construction.

\begin{defn}\label{defn:minAngle}
Let $E, F \in \Gc(\Bc)$. The \emph{minimal angle} $\theta(E, F) \in [0, \pi/2]$ between $E, F$ is defined by
\[
\sin \theta(E, F) = \min\{|e - f| : e \in E, |e| = 1, f \in F\} \, .
\]
\end{defn}

\noindent A quick computation (see, e.g., \cite{Blumenthal20162377}) shows that when $E, F \in \Gc(\Bc)$ are complements, we have that
\begin{align}\label{eq:minAngle}
\sin \theta(E, F) = |\pi_{E \ds F}|^{-1} \, .
\end{align}

Complementation is an open condition.
\begin{lem}\label{lem:compOpen}
Let $E, F \in \Gc(\Bc)$ be complements. Then, $E', F$ are complements for any $E' \in \Gc(\Bc)$ with $d_H(E, E') < \sin \theta(E, F)$. Additionally, we have the estimates
\[
|\pi_{E' \ds F}| \leq {|\pi_{E \ds F}| \over 1 - d_H(E, E') |\pi_{E \ds F}| } \, 
\]
and
\[
|\pi_{F \ds E'}|_E| \leq 2 |\pi_{E' \ds F}| \, d_H (E, E') \, .
\]
\end{lem}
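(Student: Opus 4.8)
The plan is to reduce everything to the restricted operator $P|_{E'} \colon E' \to E$, where $P := \pi_{E \ds F}$ and I abbreviate $q := d_H(E, E')$; note that by \eqref{eq:minAngle} the hypothesis reads $|P| \, q < 1$. I will use two elementary facts. First, if $P|_{E'}$ is a bijection of $E'$ onto $E$, then $\Bc = E' \oplus F$: injectivity forces $E' \cap F = E' \cap \ker P = \{0\}$, while for $y \in \Bc$ the vector $e' := (P|_{E'})^{-1}(P y) \in E'$ satisfies $P(y - e') = 0$, i.e. $y - e' \in F$; moreover in that case $\pi_{E' \ds F} = (P|_{E'})^{-1} \circ P$, since $\pi_{E' \ds F} x \in E'$ and $P x = P(\pi_{E' \ds F} x)$ for every $x \in \Bc$. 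Second, a nonzero bounded projection $\Pi$ satisfies $|\Pi| = |\Pi^2| \le |\Pi|^2$, hence $|\Pi| \ge 1$.

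First I would show $P|_{E'}$ is bounded below. For $e' \in S_{E'}$, using $d(e', S_E) \le d_H(E, E') = q$, choose $e \in S_E$ with $|e' - e| \le q + \e$; then $P e' = e + P(e' - e)$ gives $|P e'| \ge 1 - |P|(q + \e)$, and letting $\e \downarrow 0$ yields $|P e'| \ge (1 - |P| q) |e'|$ for all $e' \in E'$. Since $E'$ is complete, $P|_{E'}$ is injective with closed range and $\|(P|_{E'})^{-1}\| \le (1 - |P| q)^{-1}$ on that range. To see the range is all of $E$: for $y \in S_E$, using $d(y, S_{E'}) \le q$, choose $\tilde e \in S_{E'}$ with $|y - \tilde e| \le q + \e$; then $P \tilde e = y + P(\tilde e - y)$ shows $d(y, \operatorname{Range}(P|_{E'})) \le |P|(q + \e)$, so $\sup_{y \in S_E} d(y, \operatorname{Range}(P|_{E'})) \le |P| q < 1$. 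As $\operatorname{Range}(P|_{E'})$ is a closed subspace of $E$ and this supremum equals $1$ whenever the subspace is proper (Riesz's lemma), we get $\operatorname{Range}(P|_{E'}) = E$. By the first fact this proves $\Bc = E' \oplus F$, and combining $\pi_{E' \ds F} = (P|_{E'})^{-1} P$ with the norm bound on $(P|_{E'})^{-1}$ gives $|\pi_{E' \ds F}| \le |P| / (1 - |P| q)$, which is the first estimate.

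For the second estimate, write $P' := \pi_{E' \ds F}$ and $Q' := \pi_{F \ds E'} = \Id - P'$ (bounded, by the Closed Graph Theorem, since $E', F$ are closed complements). Given $e \in S_E$, choose $\tilde e \in E'$ with $|e - \tilde e| \le q + \e$; since $Q' \tilde e = 0$ we have $Q' e = Q'(e - \tilde e)$, so $|Q' e| \le |Q'|(q + \e)$, and letting $\e \downarrow 0$ gives $|\pi_{F \ds E'}|_E| \le |Q'|\, q = |\Id - P'|\, q$. Since $P'$ is a nonzero bounded projection, $|P'| \ge 1$ by the second fact, so $|\Id - P'| \le 1 + |P'| \le 2 |P'|$, and therefore $|\pi_{F \ds E'}|_E| \le 2 |\pi_{E' \ds F}| \, d_H(E, E')$.

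The only genuinely nontrivial step is establishing surjectivity of $P|_{E'}$ onto $E$: in finite dimensions it would follow from injectivity by counting dimensions, but in the Banach setting one needs the Riesz-lemma argument above to exclude a proper closed range. Everything else is routine bookkeeping, with the minor care that the infima and suprema defining $d_H$ and distances to sets need not be attained — hence the auxiliary $\e > 0$, which is sent to zero at each use.
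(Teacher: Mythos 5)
Your proof is correct. The paper itself delegates the proof of Lemma \ref{lem:compOpen} to the Appendix of \cite{Blumenthal2016}, so there is no in-text argument to compare against; however, your strategy---showing $\pi_{E\ds F}|_{E'}$ is a bounded-below linear bijection $E' \to E$ (with Riesz's lemma ruling out a proper closed range, the step that genuinely requires care in infinite dimensions), so that $\pi_{E'\ds F} = (\pi_{E\ds F}|_{E'})^{-1}\circ\pi_{E\ds F}$, and then using $|\pi_{E'\ds F}|\geq 1$ to turn $|\Id - \pi_{E'\ds F}|\leq 1 + |\pi_{E'\ds F}|$ into the factor $2$---is the standard route to this lemma and almost certainly matches the cited reference in substance. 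The $\epsilon$-bookkeeping around non-attained infima in $d_H$ is handled correctly throughout.
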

For a proof, see the Appendix of \cite{Blumenthal2016}.

\begin{lem}\label{lem:projectionContinuity}
Let $E, F \in \Gc(\Bc)$ be complements. Then, there are open neighborhoods $\Nc_E, \Nc_F \subset \Gc(\Bc)$ of $E, F$, respectively, such that (i) for any $E' \in \Nc_E, F' \in \Nc_F$, we have that $E', F'$ are complements, and (ii) the map $(E' , F') \mapsto \pi_{E' \ds F'}$ is continuous on $\Nc_E \times \Nc_F$ in the operator norm.
\end{lem}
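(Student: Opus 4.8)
The plan is to leverage Lemma \ref{lem:compOpen} twice, once to move $E$ inside a small Hausdorff ball and once to move $F$, and then to combine the two resulting quantitative estimates with the triangle inequality in operator norm. First I would fix $E, F \in \Gc(\Bc)$ complements and set $\delta = \sin\theta(E,F) = |\pi_{E\ds F}|^{-1}$ via \eqref{eq:minAngle}. I would then define $\Nc_E$ to be the Hausdorff ball of radius, say, $\delta/4$ around $E$ and $\Nc_F$ the Hausdorff ball of radius $\delta/4$ around $F$. The first task is to check (i): for $E' \in \Nc_E$ and $F' \in \Nc_F$, I want $E', F'$ to be complements. One clean route is to first apply Lemma \ref{lem:compOpen} (with the roles $E \rightsquigarrow E'$, i.e. perturbing $E$) to conclude $E'$ and $F$ are complements with $|\pi_{E'\ds F}| \leq |\pi_{E\ds F}|/(1 - (\delta/4)|\pi_{E\ds F}|) = |\pi_{E\ds F}|/(1 - 1/4) = \tfrac43 |\pi_{E\ds F}|$, so $\sin\theta(E',F) \geq \tfrac34 \delta$; then apply Lemma \ref{lem:compOpen} a second time, now perturbing $F$ away from the pair $(E', F)$, which works provided $d_H(F,F') < \sin\theta(E',F)$, and $\delta/4 < \tfrac34\delta$ holds, giving that $E', F'$ are complements with a uniform bound $|\pi_{E'\ds F'}| \leq \tfrac43|\pi_{E'\ds F}| \leq \tfrac{16}{9}|\pi_{E\ds F}|$ after shrinking the radii slightly if needed. (If a cleaner constant is wanted, shrink the balls; the precise numbers do not matter.)

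For (ii), continuity, I would argue that $(E',F') \mapsto \pi_{E'\ds F'}$ is continuous at the base point $(E,F)$ and, by the same argument applied at any nearby pair (using that the whole neighborhood consists of complements with uniformly bounded projections), continuous throughout $\Nc_E \times \Nc_F$. The key computation: given $(E',F')$ close to $(E,F)$, I want to bound $|\pi_{E'\ds F'} - \pi_{E\ds F}|$ in terms of $d_H(E,E')$ and $d_H(F,F')$. I would split the estimate through the intermediate projection $\pi_{E'\ds F}$:
\[
|\pi_{E'\ds F'} - \pi_{E\ds F}| \leq |\pi_{E'\ds F'} - \pi_{E'\ds F}| + |\pi_{E'\ds F} - \pi_{E\ds F}| \, .
\]
The second term is controlled by the second estimate of Lemma \ref{lem:compOpen}: writing $x = \pi_{E\ds F} x + \pi_{F\ds E} x$ and comparing with $\pi_{E'\ds F}x$, one sees $\pi_{E'\ds F} - \pi_{E\ds F}$ factors through the correction term $\pi_{F\ds E'}|_E$ composed with $\pi_{E\ds F}$ (this is exactly the kind of bookkeeping behind $|\pi_{F\ds E'}|_E| \leq 2|\pi_{E'\ds F}|\, d_H(E,E')$), so it is $O(d_H(E,E'))$ with constant depending only on $|\pi_{E\ds F}|$. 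The first term is handled symmetrically: keeping the range $E'$ fixed and varying the kernel from $F$ to $F'$, the difference $\pi_{E'\ds F'} - \pi_{E'\ds F}$ is $O(d_H(F,F'))$ with constant depending on the uniform projection bound over $\Nc_E \times \Nc_F$. Adding the two gives $|\pi_{E'\ds F'} - \pi_{E\ds F}| \to 0$ as $(E',F') \to (E,F)$, and the uniform bounds make this locally uniform, hence continuity on the whole product neighborhood.

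The main obstacle I anticipate is purely notational/bookkeeping: cleanly reducing the ``two subspaces both move'' situation to two applications of the ``one subspace moves'' Lemma \ref{lem:compOpen}, and making sure the quantitative constants from the first application feed correctly into the hypothesis of the second (so that the intermediate pair $(E',F)$ is still comfortably complemented). There is also a minor subtlety in that Lemma \ref{lem:compOpen}'s second estimate bounds $|\pi_{F\ds E'}|_E|$, the restriction to $E$ of a projection that is a priori only defined once $E'$ and $F$ are known to be complements — so the order of the two perturbation steps matters, and I would do the $E \to E'$ step first precisely so that $|\pi_{E'\ds F}|$ is available and uniformly bounded before perturbing $F$. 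None of this requires new ideas beyond Lemmas \ref{lem:minAngle}... \ref{lem:compOpen}; it is an exercise in chaining the estimates, which is why the paper states it as a lemma with the proof left essentially to the reader or deferred to \cite{Blumenthal2016}.
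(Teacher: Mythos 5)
Your proof is correct and follows essentially the same route as the paper's: the same Hausdorff-ball neighborhoods around $E$ and $F$, the same two sequential applications of Lemma \ref{lem:compOpen} to establish (i) (first $E\to E'$ against fixed $F$, then $F\to F'$ against the already-controlled pair), and for (ii) the same triangle-inequality split through the intermediate projection $\pi_{E'\ds F}$, with each of the two resulting difference terms factored so that the second estimate of Lemma \ref{lem:compOpen} (the bound on $|\pi_{F\ds E'}|_E|$) applies. One small point worth tidying: in your second application of Lemma \ref{lem:compOpen}, where $F$ is perturbed with $(F,E')$ as the base pair, the relevant threshold is $\sin\theta(F,E')=|\pi_{F\ds E'}|^{-1}$ rather than $\sin\theta(E',F)=|\pi_{E'\ds F}|^{-1}$; since $\pi_{E'\ds F}+\pi_{F\ds E'}=\Id$ these two quantities differ by at most a factor of $2$, so the discrepancy is harmless and is covered by your hedge about shrinking the radii.
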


\begin{proof}
With $E, F$ fixed, set $\Nc_E := \{ E' \in \Gc(\Bc) : d_H(E', E) < \frac12 |\pi_{E \ds F}|^{-1} \}$. For any $E' \in \Nc_E$, note that 
\[
|\pi_{F \ds E'}| \leq 2 |\pi_{E' \ds F}| \leq 4 |\pi_{E \ds F}| \leq 8 |\pi_{F \ds E}| \, ,
\]
and so $\Nc_F = \{ F' \in \Gc(\Bc) : d_H(F', F) < \frac{1}{10} |\pi_{F \ds E}| \}$ together with $\Nc_E$ satisfy item (i) by Lemma \ref{lem:compOpen}.

To prove continuity, let $E_1, E_2 \in \Nc_E, F_1, F_2 \in \Nc_F$. Then
\begin{align*}
\pi_{E_1 \ds F_1} - \pi_{E_2 \ds F_2} &= \big( \pi_{E_1 \ds F_1} - \pi_{E_1 \ds F_2}  \big) + \big( \pi_{E_1 \ds F_2} -  \pi_{E_2 \ds F_2} \big) \\
& = \big( \pi_{F_2 \ds E_1} - \pi_{F_1 \ds E_1} \big) + \big( \pi_{E_1 \ds F_2} -  \pi_{E_2 \ds F_2} \big) \, .
\end{align*}
The norm of the second parenthetical term can be estimated as
\[
(*) = |(\pi_{E_1 \ds F_2} -  \pi_{E_2 \ds F_2}) \circ (\pi_{E_1 \ds F_2} + \pi_{F_2 \ds E_1}) | \leq |\pi_{E_1 \ds F_2}| \cdot |\pi_{F_2 \ds E_2}|_{E_1} | \, .
\]
By Lemma \ref{lem:compOpen}, $|\pi_{E_1 \ds F_2}|$ and $|\pi_{E_2 \ds F_2}|$ are bounded independently of $E_1, E_2, F_2$, and so $(*)$ is bounded $\leq \text{Const. } d_H(E_1, E_2)$. Similar arguments yield the bound $|\pi_{F_2 \ds E_1} - \pi_{F_1 \ds E_1}| \leq \text{Const. } d_H(F_1, F_2)$. This completes the proof of (ii).
\end{proof}

\subsection{Continuous subbundles of a Banach bundle}

In this subsection, we let $(Z, d_Z)$ be a compact metric space and consider the Banach bundle $\mathcal Z = Z \times \Bc$ over $Z$. We sometimes abuse notation and regard $\mathcal Z_z = \{z \} \times \Bc$ as a vector space for $z \in Z$.
\begin{defn}
Let $\Cc \subset \mathcal Z$. We say that $\Cc$ is a continuous subbundle if the following holds: (i) for any $z \in Z$, $\Cc_z = \mathcal Z_z \cap \Cc$ is a closed subspace, and (ii) the assignment $z \mapsto \Cc_z$ is continuous as a map $(Z, d_Z) \rightarrow (\Gc(\Bc), d_H)$. 
\end{defn}

We now give criteria for checking when closed subsets of $\mathcal Z$ are continuous subbundles.

\begin{lem}\label{lem:compactSubbundle}
Let $\Cc \subset \mathcal Z$ be a closed subset for which $\Cc_z = \Cc \cap \{z \} \times \Bc$ is a finite dimensional subspace of finite dimension $d$ independent of $z$. Assume that the unit sphere $S_{\Cc} = \{(z, v) \in \Cc : |v| = 1\}$ of $\Cc$ is compact. Then, $\Cc$ is a continuous subbundle of $\mathcal Z$.
\end{lem}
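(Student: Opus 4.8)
The plan is to show that each of the two defining properties of a continuous subbundle follows from compactness of $S_\Cc$ together with the hypothesis that $\dim \Cc_z = d$ for all $z$. The first property, that each $\Cc_z$ is closed, is immediate since each $\Cc_z$ is finite dimensional. So the content is the continuity of $z \mapsto \Cc_z$ as a map into $(\Gc_d(\Bc), d_H)$. By \eqref{eq:gapEquiv}, it suffices to control $\Gap(\Cc_z, \Cc_{z'})$ and $\Gap(\Cc_{z'}, \Cc_z)$, and by Lemma \ref{lem:symmCloseness}(a) it is enough to bound $\Gap(\Cc_z, \Cc_{z'})$ in one direction — say, to show that for every $z \in Z$ and every $\e > 0$ there is $\d > 0$ such that $d_Z(z, z') < \d$ implies $\Gap(\Cc_z, \Cc_{z'}) < \e$.

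The key step is the following compactness argument. Suppose continuity fails at some $z_0 \in Z$: then there exist $\e_0 > 0$, a sequence $z_n \to z_0$, and unit vectors $v_n \in \Cc_{z_n}$ with $d(v_n, \Cc_{z_0}) \geq \e_0$ for all $n$. Each pair $(z_n, v_n)$ lies in $S_\Cc$, so by compactness of $S_\Cc$ we may pass to a subsequence along which $(z_n, v_n) \to (z_*, v_*) \in S_\Cc$. Since $z_n \to z_0$ we get $z_* = z_0$, so $v_* \in \Cc_{z_0}$ is a unit vector, and $|v_n - v_*| \to 0$ forces $d(v_n, \Cc_{z_0}) \to 0$, contradicting $d(v_n, \Cc_{z_0}) \geq \e_0$. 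This establishes that $\Gap(\Cc_{z'}, \Cc_{z_0}) \to 0$ as $z' \to z_0$ — that is, the "fibers cannot suddenly tilt away" from $\Cc_{z_0}$.

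It then remains to upgrade this one-sided gap control to the two-sided Hausdorff bound. Here I would invoke Lemma \ref{lem:symmCloseness}(a): once $\Gap(\Cc_{z'}, \Cc_{z_0})$ is smaller than $1/(d+1)$, say, the reverse gap $\Gap(\Cc_{z_0}, \Cc_{z'})$ is controlled by $d\Gap(\Cc_{z'}, \Cc_{z_0}) / (1 - d\Gap(\Cc_{z'}, \Cc_{z_0}))$, which also tends to $0$. (Both subspaces have the same dimension $d$, so the lemma applies with the same $d$.) Combining with \eqref{eq:gapEquiv} gives $d_H(\Cc_{z'}, \Cc_{z_0}) \to 0$ as $z' \to z_0$, which is exactly continuity of $z \mapsto \Cc_z$ at $z_0$. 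Since $z_0 \in Z$ was arbitrary, $\Cc$ is a continuous subbundle.

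I expect the main obstacle, such as it is, to be a purely notational one: making sure the compactness-of-$S_\Cc$ argument is phrased correctly as a statement about the gap (rather than, say, about Hausdorff distance directly, which would require arguing in both directions simultaneously), and being careful that the limit vector $v_*$ genuinely lands in the right fiber $\Cc_{z_0}$ — this uses only that $\Cc$ is closed and that the base coordinate of the limit is $z_0$. No delicate estimates are needed beyond the bookkeeping supplied by Lemmas \ref{lem:symmCloseness} and \eqref{eq:gapEquiv}; the finite-dimensionality hypothesis is what lets us avoid the pathologies of infinite-dimensional Grassmannians.
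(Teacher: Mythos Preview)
Your argument is correct. It differs from the paper's route: the paper picks, for each $z_n$, a ``good basis'' $v_n^1,\ldots,v_n^d$ of $\Cc_{z_n}$ with uniformly bounded projection norms (via Lemma \ref{lem:compExist}), uses compactness of $S_\Cc$ to pass to a subsequence along which each $v_{n_l}^j$ converges to some $\hat v^j \in \Cc_{z}$, and then argues (using Lemma \ref{lem:projectionContinuity}) that the limit vectors remain linearly independent, hence span $\Cc_{z}$, from which $d_H(\Cc_{z_{n_l}},\Cc_z)\to 0$ follows. Your approach sidesteps the basis bookkeeping entirely: compactness of $S_\Cc$ gives $\Gap(\Cc_{z'},\Cc_{z_0})\to 0$ directly by contradiction, and Lemma \ref{lem:symmCloseness}(a) symmetrizes. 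This is cleaner and uses less auxiliary machinery (no good bases, no projection-continuity lemma). The paper's route, on the other hand, has the mild advantage of exhibiting an explicit limiting basis, which can be useful elsewhere.

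One small wording point: the sentence ``Suppose continuity fails at some $z_0$: then there exist \ldots $v_n\in\Cc_{z_n}$ with $d(v_n,\Cc_{z_0})\ge\e_0$'' is not literally what failure of $d_H$-continuity gives you (it could instead be a unit vector in $\Cc_{z_0}$ far from $\Cc_{z_n}$). You clearly know this --- your third paragraph handles exactly that asymmetry via Lemma \ref{lem:symmCloseness} --- so just rephrase the second paragraph as a direct proof that $\Gap(\Cc_{z'},\Cc_{z_0})\to 0$, rather than as a contradiction of full continuity.
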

\begin{proof}[Proof of Lemma \ref{lem:compactSubbundle}]
Let $z_n \to z$ be a convergent sequence in $Z$. We will show that $C_{z_n} \to C_z$ in the Hausdorff distance $d_H$. It suffices to find a subsequence $\{n_i\}$ for which $d_H(C_{z_{n_i}}, C_z) \to 0$.

Let us fix some notation. For each $n$, let $v_n^1, \cdots, v_n^d$ denote a basis of $\Cc_{z_n}$ of unit vectors for which $N[v_n^1, \cdots, v_n^d] \leq C_d$, where $C_d$ depends only on $d$ (Lemma \ref{lem:compExist}).

Using the compactness of $S_{\Cc}$, we can pass to a subsequence $n_l$ along which $v_{n_l}^j$ converges to a unit vector $\hat v^j \in \Cc_z$ for each $1 \leq j \leq d$. This implies 
$\pi_{\langle v^i_{n_l} \rangle \ds \langle v_{n_l}^j : j \neq i \rangle } \to \pi_{\langle \hat v^i \rangle \ds \langle \hat v^j : j \neq i \rangle}$ in the operator norm (use, e.g., Lemma \ref{lem:projectionContinuity}). Since $N[v_n^1, \cdots, v_n^d] \leq C_d$ for all $n$, we conclude that the cluster point $\{ \hat v^1, \cdots, \hat v^d \} \in \Cc_z$ is a linearly independent set, hence a basis for $\Cc_z$. It is now simple to check that $d_H(\Cc_{z_{n_l}}, \Cc_z) \to 0$.
\end{proof}


\noindent We note that closed subsets of $\Zc$ with finite-dimensional fibers need not be compact, nor continuous subbundles:
\begin{ex}
Let $Z = \{1, \frac12, \frac13, \cdots\} \cup \{0\}$ with the usual metric, and let $\Bc = \ell^2(\N)$ with standard basis $e_1, e_2, \cdots$. Define $\Cc_{1/n} = \langle e_n \rangle$ and $\Cc_0 = \langle e_1 \rangle$. Then $\Cc$ is closed (albeit noncompact), has one-dimensional fibers, and yet is not a continuous subbundle.
\end{ex}

\subsection{Projectivization}

Let $\P \Bc$ denote the projective space of $\Bc$. Specifically, we define the equivalence relation $\sim$ on $\Bc \setminus \{0 \}$ by setting $v \sim w$ iff $v = \lambda w$ for some $\lambda \in \R \setminus \{0\}$; we write $\P v \in \P \Bc$ for the representative of $v$. For $v, w \in \Bc \setminus \{0\}$, we define the projective metric
\begin{align}\label{eq:projectiveMetric}
d_\P (\P v, \P w) = \min \bigg\{ \bigg|  \frac{v}{|v|_b} - \frac{w}{|w|_b} \bigg|  , \bigg| \frac{v}{|v|_b} + \frac{w}{|w|_b} \bigg| \bigg\} \, ,
\end{align}
where for $v \in \Bc \setminus \{0\}$ we write $\P v \in \P \Bc$ for the equivalence class of $v$. 

\medskip

The following estimate is frequently useful.

\begin{lem}\label{lem:projectiveEstimate}
Let $E \subset \Gc(\Bc)$ be a complemented subspace with complement $F \in \Gc(\Bc)$, and let $v \in \Bc \setminus \{0\}$ be a unit vector. Write $\P E = \{ \P e : e \in E \setminus \{0\}\}$. Then
\[
\frac{|\pi_{F \ds E} v|}{|\pi_{F \ds E}|} \leq d_\P (\P v, \P E) \leq 2 |\pi_{F \ds E} v| \, .
\]
Here, $d_\P(\P v, \P E) = \inf\{d_\P (\P v, \P e) : \P e \in \P E\}$.
\end{lem}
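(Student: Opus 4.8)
\textbf{Plan for the proof of Lemma \ref{lem:projectiveEstimate}.}

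The plan is to reduce everything to a direct computation with the representative vectors, exploiting the fact that $\pi_{F \ds E}$ annihilates $E$ and acts as the identity modulo $E$. First I would record the elementary scalar inequality that for any nonzero $a, b \in \Bc$ with $|a| = |b| = 1$, one has $d_\P(\P a, \P b) = \min\{|a-b|, |a+b|\}$, and that for a unit vector $v$ and an arbitrary nonzero $e \in E$, writing $\hat e = e/|e|$, the quantity $d_\P(\P v, \P e) = \min\{|v - \hat e|, |v + \hat e|\}$. Since $d_\P(\P v, \P E)$ is the infimum of this over all $\hat e \in S_E$, and since $\hat e$ ranges over $S_E$ exactly when $-\hat e$ does, the $\pm$ ambiguity is harmless and we may as well compute $d_\P(\P v, \P E) = \inf_{e \in E \setminus \{0\}} \min\{|v - \hat e|, |v + \hat e|\} = \operatorname{dist}(v, S_E \cup (-S_E)) = \operatorname{dist}(v, S_E)$ after absorbing the sign.

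For the \emph{upper} bound: since $v = \pi_{E \ds F} v + \pi_{F \ds E} v$ and $\pi_{E \ds F} v \in E$, the natural candidate is the normalized projection $e_0 := \pi_{E \ds F} v / |\pi_{E \ds F} v|$ (valid as long as $\pi_{E \ds F} v \neq 0$). Then $d_\P(\P v, \P E) \leq |v - e_0|$, and I would estimate $|v - e_0| \leq |v - \pi_{E \ds F} v| + |\pi_{E \ds F} v - e_0| = |\pi_{F \ds E} v| + \big| |\pi_{E \ds F} v| - 1 \big|$; the last term is controlled by $\big| |\pi_{E \ds F} v| - |v| \big| \leq |\pi_{E \ds F} v - v| = |\pi_{F \ds E} v|$, giving the bound $2|\pi_{F \ds E} v|$. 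The degenerate case $\pi_{E \ds F} v = 0$ (i.e.\ $v \in F$) forces $|\pi_{F \ds E} v| = |v| = 1$, and then the bound $d_\P(\P v, \P E) \leq \sqrt{2} \leq 2$ holds trivially from the definition of the projective metric since any unit vectors are within $\sqrt 2$ in the $\min$-with-$\pm$ metric; I would note this case separately.

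For the \emph{lower} bound: for any unit $\hat e \in E$, applying $\pi_{F \ds E}$ kills $\hat e$, so $\pi_{F \ds E}(v - \hat e) = \pi_{F \ds E} v$, whence $|\pi_{F \ds E} v| = |\pi_{F \ds E}(v \pm \hat e)| \leq |\pi_{F \ds E}| \cdot |v \pm \hat e|$; taking the minimum over the sign and the infimum over $\hat e$ yields $|\pi_{F \ds E} v| \leq |\pi_{F \ds E}| \cdot d_\P(\P v, \P E)$, which rearranges to the asserted left-hand inequality. I do not anticipate a serious obstacle here; the only mild subtlety is bookkeeping the $\pm$ in the projective metric and the degenerate case $v \in F$ in the upper bound, both of which are handled by the remarks above. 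The whole argument is a few lines once the reduction of $d_\P(\P v, \P E)$ to $\operatorname{dist}(v, S_E)$ (up to sign) is in place.
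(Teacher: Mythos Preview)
Your proposal is correct and follows essentially the same approach as the paper: the lower bound via $\pi_{F \ds E}(v - \hat e) = \pi_{F \ds E} v$ and the upper bound via the normalized $E$-projection with the triangle-inequality estimate $|\pi_{F \ds E} v| + \big||\pi_{E \ds F} v| - 1\big| \leq 2|\pi_{F \ds E} v|$. Your treatment is in fact slightly more careful than the paper's, which does not explicitly handle the degenerate case $\pi_{E \ds F} v = 0$ or the $\pm$ sign in the projective metric.
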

\begin{proof}
For the first inequality, fix $\a > 1$ and let $e \in E$ be a unit vector for which $|v - e| \leq \a d_\P(\P v, \P E)$. Then 
\[
|\pi_{F \ds E} v| = |\pi_{F \ds E} (v - e) | \leq |\pi_{F \ds E}| \cdot |v - e| \leq \a |\pi_{F \ds E}| \cdot d_{\P}(\P v, \P E) \, ,
\]
and so the desired inequality obtains on taking $\a \to 1$.

For the second inequality, let $e = \pi_{E \ds F} v, f = \pi_{F \ds E} v = v - e$, and note that 
\begin{align*}
d_\P(\P v, \P E) & \leq |v - |e|^{-1} e| = \big|(1 - |e|^{-1}) e + f \big| \leq |f| + \big| |e| - 1 \big| \\
& = |f| + \big| |e| - |v| \big| \leq |f| + |v - e| = 2 |f| = 2 |\pi_{F \ds E} v| \, .
\end{align*}
\end{proof}

%

%
%
%
%

\subsection{Induced volumes, determinants and Gelfand numbers}\label{subsec:volumesGelfand}

\begin{defn}
Let $E \subset \Bc$ be a finite-dimensional subspace. We write $m_E$ for the \emph{induced volume} on $E$, which is defined to be the Haar measure on $E$ normalized so that
\[
m_E\{v \in E : |v| \leq 1\} = \omega_{\dim E} \, .
\]
Here, $\omega_q$ denotes the volume of the $q$-dimensional Euclidean unit ball in $\R^q$.
\end{defn}

Determinants on finite dimensional subspaces can now be defined as volume ratios: given a linear operator $T : \Bc \to \Bc$ and a finite dimensional subspace $E \subset \Bc$, we define
\[
\det(T | E) = \begin{cases} \frac{m_{T E}(T B)}{m_E(B)} & T|_E \text{ is injective,} \\ 0 & \text{else.} \end{cases}
\]
Here $B \subset E$ is any Borel set with positive $m_E$ measure; that $\det(T|E)$ does not depend on the particular choice of $B$ follows from the uniqueness of Haar measure up to scaling.

\begin{lem}\label{lem:determinantAngleEst}
Let $E, F \subset \Bc$ be finite-dimensional subspaces, $\dim E = k, \dim F = l$, and let $T : \Bc \to \Bc$ be a bounded linear operator such that $T|_{E \oplus F}$ is injective. Write $E' = T E, F' = T F$. Then,
\[
C^{-1} \big( \sin \theta(E', F') \big)^k \leq \frac{\det(T | E \oplus F)}{\det (T | E) \det (T | F)} \leq C \big( \sin \theta(E, F) \big)^k \, ,
\]
where $C$ is a constant depending only on $q = k + l$.
\end{lem}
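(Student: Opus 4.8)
\textbf{Proof proposal for Lemma~\ref{lem:determinantAngleEst}.}

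The plan is to reduce the statement to two elementary facts: first, that $\det(T|E\oplus F)$ can be computed against a convenient fundamental domain adapted to the splitting $E\oplus F$, and second, that the induced volume of such a fundamental domain is controlled above and below by powers of $\sin\theta$ of the two factors. Concretely, fix bases of $E$ and $F$ and consider the parallelepiped $P = \{ \sum a_i e_i + \sum b_j f_j : a_i, b_j \in [0,1]\}$ spanned by a ``good'' orthonormal-like basis of $E$ and $F$ as furnished by Lemma~\ref{lem:compExist}. By the uniqueness of Haar measure, $\det(T|E\oplus F) = m_{E'\oplus F'}(TP)/m_{E\oplus F}(P)$, and $TP$ is the parallelepiped spanned by $Te_i, Tf_j$. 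The key point is then a ``Fubini-type'' estimate: for a parallelepiped $P$ spanned by subspace-adapted blocks $U \subset E$, $W \subset F$, one has
\begin{align*}
m_{E\oplus F}(P) = m_E(U)\cdot m_F(W) \cdot (\text{correction depending on the angle between } E, F)\, ,
\end{align*}
where the correction lies between $c_q (\sin\theta(E,F))^{\min(k,l)}$ and $1$ — more precisely, slicing $P$ by cosets of $F$ and integrating, each slice has $F$-volume equal to $m_F(W)$ up to a bounded factor, and the ``height'' of the stack in the $E$-direction picks up exactly a factor comparable to $(\sin\theta(E,F))^k$ because projecting $E$ along $F$ onto a complement of $F$ contracts $k$-volume by $\det(\pi_{\cdot\ds F}|_E)$, which is comparable to $(\sin\theta(E,F))^k$ by \eqref{eq:minAngle} together with a bounded distortion coming from the good basis.

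First I would make precise the comparison $\det(\pi|_E) \asymp_q (\sin\theta(E,F))^k$ for a projection $\pi$ with kernel $F$: the upper bound $\det(\pi|_E)\le |\pi|^k \le (\sin\theta(E,F))^{-k}$ is immediate, and the matching lower bound follows by choosing the good basis of $E$ (with $N[v_1,\dots,v_k]\le C_k$) and estimating the Gram-type volume from below, using that $|\pi v| \ge \sin\theta(E,F)$ for unit $v\in E$ is false in general but $|\pi v|\ge |v| - |\pi_{F\ds E}v|$ and a combinatorial argument over the good basis yields $\det(\pi|E) \ge c_q (\sin\theta(E,F))^k$. Then I would assemble: $\det(T|E\oplus F) = m_{E'\oplus F'}(TP)/m_{E\oplus F}(P)$, apply the Fubini estimate in both the numerator (with angle $\theta(E',F')$) and the denominator (with angle $\theta(E,F)$), and recognize $m_{E'}(TU)/m_E(U) = \det(T|E)$ up to the bounded basis-distortion factors, similarly for $F$. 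Collecting the bounded constants into a single $C = C(q)$ gives the claimed two-sided bound, with $(\sin\theta(E',F'))^k$ on the left from the numerator's angle correction and $(\sin\theta(E,F))^k$ on the right from the denominator's. One must be slightly careful about which factor — $E$ or $F$, i.e.\ exponent $k$ versus $l$ — the angle correction attaches to; since $\sin\theta\le 1$, replacing $\min(k,l)$ by $k$ only weakens the bound, so the asymmetric statement as written is safe.

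The main obstacle I anticipate is the bounded-distortion bookkeeping between the abstract induced Haar volume $m_E$ and the ``coordinate'' volume computed in the good basis — i.e.\ showing that $m_E(P_E)$ for the unit parallelepiped $P_E$ of a good basis of $E$ is bounded above and below by constants depending only on $k$. This is where Lemma~\ref{lem:compExist} is essential: a basis of unit vectors with $N[v_1,\dots,v_k]\le C_k$ has all pairwise ``angles'' bounded away from $0$ in a quantitative way, so the parallelepiped it spans is comparable to the unit cube, hence its $m_E$-volume is pinched between dimensional constants. Once that is in hand, every step is a volume-ratio identity or the elementary projection estimate above, and the constant $C(q)$ is simply the accumulated product of these dimensional constants. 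I would organize the write-up as: (i) good-basis volume comparison; (ii) $\det(\pi|E)\asymp_q (\sin\theta)^k$; (iii) the Fubini slicing estimate $m_{E\oplus F}(P)\asymp_q m_E\cdot m_F\cdot(\sin\theta(E,F))^k$; (iv) assembly.
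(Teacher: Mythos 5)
Your reduction to an angle-correction constant $c(E,F) := m_{E\oplus F}/(m_E\otimes m_F)$ (so that the determinant ratio is literally $c(E',F')/c(E,F)$, with $T$ mapping a product parallelepiped to a product parallelepiped) is the right mechanism, and your lower-bound half is essentially correct. But the centerpiece claim, the two-sided comparison $\det\bigl(\pi_{\cdot\ds F}|_E\bigr)\asymp_q \bigl(\sin\theta(E,F)\bigr)^k$, is false, and your own sketch already contains the contradiction: you write ``the upper bound $\det(\pi|_E)\le |\pi|^k\le (\sin\theta(E,F))^{-k}$'' and then call the lower bound ``matching.'' It does not match: $(\sin\theta)^{-k}$ and $(\sin\theta)^{k}$ are not comparable as $\sin\theta\to 0$. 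The truth is one-sided. In Hilbert space the orthogonal projection with kernel $F$ sends a principal-angle-adapted orthonormal basis of $E$ to mutually orthogonal vectors of lengths $\cos\alpha_1,\dots,\cos\alpha_{\min(k,l)},1,\dots,1$, so
\[
\det\bigl(\pi_{F^\perp}|_E\bigr) = \prod_{i\le \min(k,l)} \cos\alpha_i\,, \qquad \sin\theta(E,F)=\min_i\cos\alpha_i\,,
\]
which lies in $\bigl[(\sin\theta(E,F))^{\min(k,l)},\,1\bigr]$ and is \emph{not} pinned near $(\sin\theta(E,F))^{k}$: take $k=l=2$, $\alpha_1=0$, $\alpha_2\to\pi/2$, so the product is $\cos\alpha_2$ while $(\sin\theta)^{2}=\cos^2\alpha_2\ll\cos\alpha_2$. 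The correct statement is $c_q^{-1}\bigl(\sin\theta(E,F)\bigr)^{k}\le c(E,F)\le C_q$ (only the lower bound carries the angle; the upper bound is a pure dimensional constant).

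This has a second consequence that I think you should have noticed: the upper inequality in the lemma as printed is not true. With $T=\mathrm{Id}$, the middle quantity equals $1$ and $E'=E$, $F'=F$, so it would force $1\le C\bigl(\sin\theta(E,F)\bigr)^k$, which fails for $E,F$ nearly aligned. (Also, applying the lemma to $(T|_{E\oplus F})^{-1}$ and taking reciprocals would force $\sin\theta(E,F)\ge C^{-1/k}$ for all $E,F$, which is absurd.) The intended right-hand side is $C\bigl(\sin\theta(E,F)\bigr)^{-k}$, and it is exactly this that your own one-sided estimates would produce if you carried them through honestly: lower-bound the numerator by $c_q\bigl(\sin\theta(E',F')\bigr)^k\, m_{E'}\, m_{F'}$ and upper-bound the denominator by $C_q\, m_E\, m_F$ for the lower inequality; upper-bound the numerator by $C_q\, m_{E'}\, m_{F'}$ and lower-bound the denominator by $c_q\bigl(\sin\theta(E,F)\bigr)^k\, m_E\, m_F$ for the upper inequality, giving $C\bigl(\sin\theta(E,F)\bigr)^{-k}$. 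So: the slicing framework and the good-basis bookkeeping via Lemma~\ref{lem:compExist} are fine, but replace the false two-sided $\asymp$ by the correct one-sided bounds $c_q\bigl(\sin\theta(E,F)\bigr)^k\le c(E,F)\le C_q$, and recognize that this proves the lemma with the exponent on the right flipped to $-k$ (which is in fact the only half used in the paper, in the proof of Proposition~\ref{prop:moveDownSubspace}).
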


\begin{defn}
Let $q \in \N$. For a linear operator $T : \Bc \to \Bc$, the maximal $q$-dimensional volume growth $V_q(T)$ is defined by
\[
V_q(T) = \sup\{\det(T | E) : E \subset \Bc, \, \dim E = q \} \, .
\]
\end{defn}

For bounded linear operators $T$ of a Hilbert space, the quantity $V_q(T)$ is given by the product $\prod_{i = 1}^q \sigma_i(T)$, where $\sigma_i(T)$ denotes the $i$-th
singular value of $T$ (that is, the $i$-th eigenvalue, counted in descending order, of the positive semi-definite self-adjoint operator $T^* T$). 

For operators of a Banach space, there is no `canonical' definition of singular value. Instead one often works with one of a variety of surrogate notions, 
called $s$-numbers in the literature-- see, e.g., \cite{pietsch1986eigenvalues}. The following $s$-number is useful for our purposes.

\begin{defn}
Let $T : V \to V'$ be a bounded linear operator of Banach spaces $(V, |\cdot|), (V', |\cdot|')$. For $k \geq 1$, the $k$-th Gelfand number $c_k(T)$ is defined to be
\[
c_k(T) = \inf \{|T|_F| : F \subset V, \, \codim F = k - 1\} \, .
\]
\end{defn}
For bounded linear operators on Hilbert spaces, the Gelfand numbers coincide with singular values, hence $V_q(T) = \prod_{i = 1}^q c_i(T)$. In the Banach space setting, we can recover the following weaker relation.
\begin{lem}\label{lem:equivVolumeGelfand}
For each $q \in \N$ there is a constant $C \geq 1$, depending only on $q$, with the following property. For any bounded linear $T : \Bc \to \Bc$, we have that
\[
C^{-1} V_q(T) \leq \prod_{i = 1}^q c_i(T) \leq C V_q(T) \, .
\]
\end{lem}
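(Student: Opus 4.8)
The plan is to prove Lemma \ref{lem:equivVolumeGelfand} by relating both $V_q(T)$ and $\prod_{i=1}^q c_i(T)$ to a common auxiliary quantity built from a `good' $q$-dimensional subspace, exploiting the fact that on a $q$-dimensional space all notions of volume and singular value are equivalent up to constants depending only on $q$ (via John's theorem / Auerbach bases). First I would record the obvious observation that for an operator $S: W \to W'$ between spaces of dimension exactly $q$, the Euclidean structures furnished by Lemma \ref{lem:compExist} (good bases) give $\det(S|W) \asymp_q \prod_{i=1}^q c_i(S)$, since after choosing John ellipsoids on $W$ and $W'$ the operator is comparable to a Hilbert-space operator, for which the identity $V_q = \prod c_i$ is exact. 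This handles the `finite-dimensional core' where the two quantities genuinely agree.

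Next I would prove the lower bound $\prod_{i=1}^q c_i(T) \geq C^{-1} V_q(T)$. Pick $E$ nearly realizing the supremum defining $V_q(T)$, so $\dim E = q$ and $\det(T|E)$ is close to $V_q(T)$. Restricting $T$ to $E$, I want to say $\det(T|E) \leq C_q \prod_{i=1}^q c_i(T|_E)$ using the finite-dimensional equivalence above, and then observe that $c_i(T|_E) \leq c_i(T)$ for each $i$ because the infimum defining $c_i(T)$ ranges over a larger family of finite-codimension subspaces of $\Bc$ than the corresponding infimum for $T|_E$ (restricting and intersecting with $E$ can only make the norm smaller). Chaining these gives $V_q(T) \lesssim_q \prod_{i=1}^q c_i(T)$.

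For the upper bound $\prod_{i=1}^q c_i(T) \leq C\, V_q(T)$, I would build a subspace witnessing volume growth from the near-minimizers in the definitions of $c_1(T), \dots, c_q(T)$. Concretely, for each $k \leq q$ choose $F_k$ with $\codim F_k = k-1$ and $|T|_{F_k}| \leq (1+\e) c_k(T)$; set $F = F_1 \cap \dots \cap F_q$, which has codimension at most $q-1$, and pick any $q$-dimensional $E$; since $\codim F \le q-1 < q = \dim E$ one can instead proceed more carefully by selecting vectors inductively: choose unit $v_1 \in F_1$ with $|Tv_1|$ comparable to... — actually the cleaner route is the standard one: inductively pick unit vectors $v_k$ in $F_k$ minus the span of $v_1,\dots,v_{k-1}$ with controlled projection constants (using the good-basis constant $C_q$), giving a $q$-dimensional $E = \langle v_1, \dots, v_q\rangle$ with well-conditioned basis, and then $\det(T|E) \gtrsim_q \prod_{k=1}^q |T v_k|$ while each $|Tv_k|$ is $\gtrsim$ some $c_j(T)$ after reindexing — here one uses that the Gelfand numbers are decreasing and that $v_k$ avoiding a $(k-1)$-dimensional span while lying in $F_k$ forces $|Tv_k| \geq$ (up to the conditioning constant) $c_k(T)$ is false in general; instead one gets $|T v_k| \le |T|_{F_k}| \le (1+\e)c_k(T)$ as an upper bound, which is the wrong direction.

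The main obstacle is precisely this upper bound $\prod c_i(T) \le C V_q(T)$: unlike in Hilbert space there is no orthogonality to force the constructed vectors to have \emph{large} image, so one must instead run the standard `volume $\Rightarrow$ Gelfand' comparison contrapositively — if $\prod_{i=1}^q c_i(T)$ were much larger than $V_q(T)$, then every $q$-dimensional subspace would have small determinant, but picking the subspace spanned by a well-conditioned basis extracted from the flag witnessing the $c_i$'s contradicts this once one invokes Lemma \ref{lem:determinantAngleEst} to keep the angle factors under control and the finite-dimensional equivalence $\det \asymp_q \prod c_i$ on that subspace. I expect the cleanest writeup cites the corresponding statement from \cite{pietsch1986eigenvalues} or \cite{Blumenthal20162377} for the comparison of $V_q$ with the product of Gelfand numbers, reducing the work to assembling the finite-dimensional estimates of Lemmas \ref{lem:compExist} and \ref{lem:determinantAngleEst}; the genuinely new content is only bookkeeping of the $q$-dependent constants.
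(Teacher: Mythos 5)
Your finite-dimensional reduction and the lower bound $V_q(T) \leq C \prod_{i=1}^q c_i(T)$ are correct: picking $E$ near the supremum defining $V_q(T)$, applying the finite-dimensional equivalence $\det(T|E) \asymp_q \prod_{i=1}^q c_i(T|_E)$ via Auerbach/John, and using the monotonicity $c_i(T|_E) \leq c_i(T)$ (intersecting a codimension-$(i-1)$ subspace of $\Bc$ with $E$ gives a subspace of codimension at most $i-1$ in $E$ on which the restricted norm can only decrease) chains together correctly.

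The gap is exactly where you flag it, in the upper bound $\prod_{i=1}^q c_i(T) \leq C V_q(T)$, and the contrapositive re-phrasing does not escape the difficulty. To lower-bound $\det(T|E)$ for a constructed $q$-dimensional $E$ you need the \emph{image} vectors $Te_1, \ldots, Te_q$ to be uniformly spread out — a lower bound on $\sin\theta(T\langle e_1,\ldots,e_{k-1}\rangle, T\langle e_k\rangle)$, which is precisely the angle factor on the favorable side of Lemma \ref{lem:determinantAngleEst}. The near-minimizing flag $F_1 \supset F_2 \supset \cdots$ for the Gelfand numbers gives only the \emph{upper} bounds $|T|_{F_k}| \leq (1+\e) c_k(T)$; choosing $e_k \in F_k$ therefore controls $|Te_k|$ in the wrong direction and gives no information whatever about the image angles. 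Saying one "invokes Lemma \ref{lem:determinantAngleEst} to keep the angle factors under control" is begging the question — controlling those angle factors \emph{is} the missing content. Note also that your contrapositive framing ("every $q$-dimensional subspace would have small determinant") is logically identical to the direct statement; it does not supply a new mechanism for producing a subspace with large determinant.

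The paper itself states Lemma \ref{lem:equivVolumeGelfand} without proof, deferring to the $s$-number literature; it is established in \cite{Blumenthal20162377} via Pietsch-type comparisons of $s$-number sequences (see also \cite{pietsch1986eigenvalues}). Your instinct to cite those sources for the hard direction is therefore the right call here, but the sketch as written should not be presented as a proof of that direction.
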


\subsection{Exponential separations for Banach space cocycles}\label{subsec:exponentialSplittings}

Here we recall the definition of exponential separation and several related results we will need later on. Throughout \ref{subsec:exponentialSplittings}, $\Phi$ is a linear semiflow on $\Vc = B \times \Bc$ satisfying (H1) -- (H4) as in \S \ref{sec:introResults}. We note that Lemma \ref{lem:expSepUnique} and Proposition \ref{prop:moveDownSubspace} are used heavily in \S \ref{sec:cleanUp}.

\begin{defn}
Let $\Vc = \Ec \oplus \Fc$ be a Whitney splitting of $\Vc$ into continuously varying, forward invariant subbundles for which $\dim \Ec < \infty$. We say that $\Ec, \Fc$ are \emph{exponentially separated} if there exist constants $K , \gamma > 0$ with the following property: for any $t > 0$, we have that
\[
|\Phi^t_b|_{\Fc_b}| \leq K e^{- \gamma t} m(\Phi^t_b|_{\Ec_b}) \, .
\]
Here, for a linear operator $T$ on $\Bc$ and a subspace $E \subset \Bc$ we write $m(T|_E) = \inf \{ |T e| : e \in E, |e| =1 \}$ for the minimum norm of $T|_{E}$.
\end{defn}

Note that by injectivity and finite-dimensionality of $\Ec$, it holds automatically that $\Phi_b^t : \Ec_b \to \Ec_{\phi^t b}$ is an isomorphism for any $b \in B, t \geq 0$. In particular, all points of $\Ec$ possess negative continuation and $\Ec$ is backwards invariant.

\begin{defn}
We say that $\Phi$ has an \emph{exponential splitting of index $k$} if there is an exponential splitting $\Vc = \Ec \oplus \Fc$ for $\Phi$ for which $\dim \Ec = k$.
\end{defn}

{

\begin{lem}\label{lem:expSepUnique}
Let $k \in \N$. If $\Phi$ has an exponential splitting of index $k$ and $\Vc = \Ec \oplus \Fc = \Ec' \oplus \Fc'$ are two exponential splittings for $\Phi$ for which $\dim \Ec  = \dim \Ec' = k$, then $\Ec = \Ec'$ and $\Fc = \Fc'$.
\end{lem}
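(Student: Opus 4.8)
The plan is to show that the two exponentially separated splittings must coincide by exploiting the growth-rate gap inherent in the definition. First I would set up notation: suppose $\Vc = \Ec \oplus \Fc$ and $\Vc = \Ec' \oplus \Fc'$ are two exponential splittings of index $k$, with constants $K, \gamma > 0$ (I may take a common pair of constants valid for both). The key estimate is that for any $b \in B$ and any nonzero $v \in \Ec_b$, the forward orbit grows at least like $m(\Phi^t_b|_{\Ec_b}) \geq K^{-1} e^{\gamma t} |\Phi^t_b|_{\Fc_b}|$ in a suitable comparative sense, whereas vectors with a nonzero $\Fc'$-component grow comparatively slower.

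The main step is to prove $\Ec \subseteq \Ec'$ (and symmetrically $\Ec' \subseteq \Ec$, which then forces equality of both the $\Ec$'s and, since the splittings are direct, of the $\Fc$'s as well — here one needs that two exponentially separated splittings with the same finite-dimensional summand have the same complement, which follows because $\Fc_b = \{v : |\Phi^t_b v| \text{ grows slowly}\}$ is characterized dynamically). To show $\Ec_b \subseteq \Ec'_b$, take $0 \neq v \in \Ec_b$ and write $v = v' + w'$ with $v' = \pi_{\Ec'_b \ds \Fc'_b} v \in \Ec'_b$ and $w' = \pi_{\Fc'_b \ds \Ec'_b} v \in \Fc'_b$. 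Suppose for contradiction $w' \neq 0$. On one hand, since $v \in \Ec_b$, we have a lower bound $|\Phi^t_b v| \geq m(\Phi^t_b|_{\Ec_b}) |v|$ (finite-dimensionality and invertibility of $\Phi^t_b$ on $\Ec$). On the other hand, $\Phi^t_b v = \Phi^t_b v' + \Phi^t_b w'$, and applying the projection $\pi_{\Fc'_{\phi^t b} \ds \Ec'_{\phi^t b}}$ — whose norm is uniformly bounded over $b$ by continuity of the bundles, compactness of $B$, and Lemma \ref{lem:projectionContinuity} — gives $|\Phi^t_b w'| \leq C |\Phi^t_b v|$. Combining with the exponential separation estimate for the primed splitting, $|\Phi^t_b w'| \leq |\Phi^t_b|_{\Fc'_b}| |w'| \leq K e^{-\gamma t} m(\Phi^t_b|_{\Ec'_b}) |w'|$, and using that $m(\Phi^t_b|_{\Ec'_b})|v'| \leq |\Phi^t_b v'| \leq |\Phi^t_b v| + |\Phi^t_b w'| \leq (1 + C)|\Phi^t_b v|$, we can bound $m(\Phi^t_b|_{\Ec'_b})$ from below and derive that $|\Phi^t_b v|$ both dominates and is dominated in a way that, after sending $t \to \infty$, forces $w' = 0$. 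One has to be a little careful chaining these inequalities, but the upshot is a contradiction between the fast growth of $v$ (as an element of $\Ec_b$) relative to $\Phi^t_b|_{\Fc_b}$ and the slow growth of its $\Fc'$-component; more precisely, the $\Fc'$-component would have to grow at rate comparable to $v$, but exponential separation for the primed splitting caps it strictly below, once we note $v' \in \Ec'_b$ cannot itself grow arbitrarily faster than $m(\Phi^t_b|_{\Ec'_b})$.

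I would organize the final comparison as follows: from $w' \neq 0$ and the bounds above, one gets $m(\Phi^t_b|_{\Ec'_b}) \leq \frac{C'}{|w'|} e^{-\gamma t} |\Phi^t_b v| \cdot \frac{1}{m(\Phi^t_b|_{\Ec'_b})} \cdot (\text{something})$ — rather than push symbolic manipulations here, the clean route is: $|w'| \, m(\Phi^t_b|_{\Ec'_b})^{-1} \cdot |\Phi^t_b w'| \le \ldots$ Let me instead phrase the contradiction directly: exponential separation of $\Ec, \Fc$ says vectors in $\Ec$ grow with exponent $> \gamma$ above the top exponent on $\Fc$; symmetrically for the primed splitting. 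A vector $v \in \Ec_b$ with nonzero $\Fc'_b$-component would, projected to $\Fc'$, realize a growth exponent no larger than the top $\Fc'$-exponent, yet $v \in \Ec_b$ forces its full growth (and hence, via the bounded projection $\pi_{\Fc'\ds\Ec'}$, the growth of its $\Fc'$-part as measured after renormalizing) to exceed that — contradiction. The cleanest technical packaging uses the quantity $\log|\Phi^t_b v|$ and the two-sided exponential-separation inequalities to squeeze.

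The main obstacle I anticipate is getting \emph{uniform} (in $b$) control of the projection norms $|\pi_{\Ec'_b \ds \Fc'_b}|$ and $|\pi_{\Fc'_b \ds \Ec'_b}|$: this is where compactness of $B$, continuity of the subbundles in the Hausdorff metric, and Lemma \ref{lem:projectionContinuity} (together with \eqref{eq:minAngle}) are essential, since without uniform bounds the constant $C$ above could degenerate and the $t \to \infty$ argument would fail. A secondary subtlety is the passage from "same $\Ec$" to "same $\Fc$": one argues that $\Fc_b$ must contain every vector whose forward growth is strictly slower than $m(\Phi^t_b|_{\Ec_b})$ by more than the rate $\gamma$, and since $\Vc_b = \Ec_b \oplus \Fc_b = \Ec_b \oplus \Fc'_b$ with $\dim \Ec_b = k$, a dimension/codimension count plus the growth characterization pins down $\Fc_b = \Fc'_b$.
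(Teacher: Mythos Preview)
Your overall strategy---comparing growth rates along the two splittings---is the right one, and the part where you deduce $\Fc = \Fc'$ from $\Ec = \Ec'$ is essentially correct. But the core step, showing $\Ec_b \subseteq \Ec'_b$ by a purely forward-time argument, has a genuine gap that you yourself flag (``rather than push symbolic manipulations here\ldots''). The problem is this: from $v \in \Ec_b$ you only know that $|\Phi^t_b v|$ dominates $|\Phi^t_b|_{\Fc_b}|$ by a factor $K^{-1} e^{\gamma t}$. You want to contradict the fact that the $\Fc'$-component $w'$ satisfies $|\Phi^t_b w'| \leq K e^{-\gamma t} m(\Phi^t_b|_{\Ec'_b})|w'|$. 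But these two inequalities live in different splittings: nothing you have written compares $|\Phi^t_b|_{\Fc_b}|$ with $|\Phi^t_b|_{\Fc'_b}|$, or $m(\Phi^t_b|_{\Ec_b})$ with $m(\Phi^t_b|_{\Ec'_b})$. In the case $v' \neq 0, w' \neq 0$ there is simply no contradiction to extract from your ingredients, and in the degenerate case $v' = 0$ (i.e.\ $v \in \Ec_b \cap \Fc'_b$) your argument is silent.

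The paper closes both gaps. First, it proves as a separate Claim that $\Ec'_b \cap \Fc_b = \{0\}$ by a trick that uses \emph{both} splittings at once: take a unit vector $\hat e$ in the putative intersection and a vector $f' \in \Fc'_b$ with nonzero $\Ec_b$-component $e$; then chain the primed separation ($|\Phi^t f'| \leq K e^{-\gamma t}|\Phi^t \hat e|$), the fact $\hat e \in \Fc_b$ ($|\Phi^t \hat e| \leq |\Phi^t|_{\Fc_b}|$), and a lower bound $|\Phi^t f'| \geq m(\Phi^t|_{\Ec_b})|e| - |\Phi^t|_{\Fc_b}||f|$ to force $m(\Phi^t|_{\Ec_b})/|\Phi^t|_{\Fc_b}|$ bounded, contradicting the unprimed separation. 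Second---and this is the key idea you are missing---to prove $\Ec' \subseteq \Ec$ the paper uses \emph{backward} continuation: since $\dim \Ec' < \infty$ and $\Phi$ is injective, every $e' \in \Ec'_b$ has a well-defined preimage $e'_{-t} \in \Ec'_{\phi^{-t} b}$, and one decomposes $e'_{-t}$ along $\Ec_{\phi^{-t}b} \oplus \Fc_{\phi^{-t}b}$ and pushes forward. The Claim supplies the uniform angle bound $\inf_b \sin\theta(\Ec'_b,\Fc_b) > 0$ needed to control $|e'_{-t}|$, and sending $t \to \infty$ kills the $\Fc$-component of $e'$. Your forward-only argument cannot substitute for this step without importing an external comparison such as \eqref{eq:gelfandAchieved}.
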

\begin{proof}
Let $\Vc = \Ec \oplus \Fc = \Ec' \oplus \Fc'$ be two exponential splittings for $\Phi$ for which $\dim \Ec = \dim \Ec'$. Let $K, \gamma > 0$ be such that
\begin{align}\label{eq:expSplitUniqueEstimate}
|\Phi^t_{\hat b}|_{\Fc_{\hat b}^{(')}}| \leq K e^{- \gamma t} m(\Phi^t_{\hat b}|_{\Ec^{(')}_{\hat b}})
\end{align}
for all $\hat b \in B, t \geq 0$.

We first show the following.
{ \begin{cla}\label{cla:complementUniqueness}
For any $b \in B$, we have that $\Ec_b' \cap \Fc_b = \{0\}$, hence $\Vc_b = \Ec_b' \oplus \Fc_b$.
\end{cla}
\noindent The Claim implies
\begin{align}\label{eq:anglePositiveLowerBound}
\inf_{b \in B} \sin \theta(\Ec_b', \Fc_{b}) > 0 \, .
\end{align}
To deduce \eqref{eq:anglePositiveLowerBound} from Claim \ref{cla:complementUniqueness}, observe that $b \mapsto \pi_{\Ec'_b \ds \Fc_b}$ is continuous in the operator norm (Lemma \ref{lem:projectionContinuity}), and so $\sup_{b \in B} |\pi_{\Ec'_b \ds \Fc_b}| = (\inf_{b \in B} \sin \theta(\Ec_b' , \Fc_b) )^{-1} < \infty$.
}
\begin{proof}[Proof of Claim]
For the sake of contradiction, assume that $\Ec_b' \cap \Fc_b \neq \{0\}$ for some $b \in B$. Without loss we may assume $\Fc_b\setminus \Fc_b' \neq \emptyset$, since otherwise $\Fc_b = \Fc_b'$. It follows that there is some unit vector $f' \in \Fc'_b$ for which $e = \pi_{\Ec_b \ds \Fc_b} f' \neq 0$. Write $f = f' - e = \pi_{\Fc_b \ds \Ec_b} f'$.

 Let now $\hat e \in \Ec_b' \cap \Fc_b$ be a unit vector. Since $\hat e \in \Ec_b', f' \in \Fc_b'$, we have $|\Phi^t_b f'| \leq K e^{- \gamma t} |\Phi^t_b \hat e|$. Using $\hat e \in \Fc_b$, we now estimate
 \[
 K e^{- \gamma t} |\Phi^t_b |_{\Fc_b}| \geq K e^{- \gamma t} |\Phi^t_b \hat e| \geq |\Phi^t_b f'| \geq |\Phi^t_b e| - |\Phi^t_b f| \geq m(\Phi^t_b|_{\Ec_b}) \cdot|e| - |\Phi^t_b|_{\Fc_b}| \cdot |f|\, .
 \]
Rearranging, one obtains that the ratio $m(\Phi^t_b|_{\Ec_b}) / |\Phi^t_b|_{\Fc_b}|$ is bounded by a constant independent of time-- this contradicts the exponential separation of $\Ec, \Fc$, hence a contradiction.
\end{proof}

Let us now return to the proof of Lemma \ref{lem:expSepUnique}.

\noindent {\bf Proving $\Ec = \Ec'$. } Let $b \in B$ and $e' \in \Ec'$ be a unit vector, decomposed as $e' = e + f$ according to the splitting $\Vc_b = \Ec_b \oplus \Fc_b$. We will show $f = 0$, hence $\Ec_b' \subset \Ec_b$ for all $b$; equality follows on recalling that $\dim \Ec = \dim \Ec'$ by assumption.

For each $t > 0$, let $e_{- t}'$ be such that $\Phi^t_{\phi^{-t} b} e_{- t}' = e'$, and write $e'_{- t} = e_{-t} + f_{-t}$ according to the splitting $\Vc_{\phi^{-t} b} = \Ec_{\phi^{-t} b} \oplus \Fc_{\phi^{-t} b}$. Note that by equicontinuity of $\Ec, \Fc$, we have that $\Phi^t_{\phi^{-t} b} e_{-t} = e, \Phi^t_{\phi^{-t} b} f_{-t} = f$. 

To begin, observe that 
\[
|f| \leq |\Phi^t_{\phi^{-t} b}|_{\Fc_{\phi^{-t} b}}| \cdot |f_{-t} | \leq C'  |\Phi^t_{\phi^{-t} b}|_{\Fc_{\phi^{-t} b}}|  \cdot |e_{-t}'| \, ,
\]
where $C' = \sup_{\hat b} |\pi_{\Fc_{\hat b} \ds \Ec_{\hat b} }|$. We now estimate $|e_{-t}'|$:
\[
1 = |e'| = |\Phi^t_{\phi^{-t} b} e_{-t}'| \geq |\Phi^t_{\phi^{-t} b} e_{-t}| - |\Phi^t_{\phi^{-t} b} f_{-t}| \geq m(\Phi^t_{\phi^{-t} b}|_{\Ec_{\phi^{-t} b}}) |e_{-t}| - |\Phi^t_{\phi^{-t} b}|_{\Fc_{\phi^{-t} b}}| \cdot |f_{-t}|
\]
From \eqref{eq:anglePositiveLowerBound}, we have that $d(\hat e', \Fc_{ b}) \geq c := \inf_{\hat b \in B} \sin \theta(\Ec_{\hat b}', \Fc_{\hat b}) > 0$ for all $ \hat e' \in \Ec_{\hat b}', |\hat e'| = 1$. In particular, $|\pi_{\Ec_{\hat b} \ds \Fc_{\hat b}} \hat e'| = |\hat e' - \pi_{\Fc_{\hat b} \ds \Ec_{\hat b}} \hat e'| \geq d(\hat e', \Fc_{\hat b}) \geq c$. Applying to $\hat e' = e_{-t}' / |e_{-t}'|$, we obtain that $|e_{-t}| \geq c |e_{-t} '|$. In conjunction with the estimate $|f_{-t}| \leq C' |e_{-t}'|$, we conclude that
\begin{gather*}
|e'_{-t}| \leq \big( c m(\Phi^t_{\phi^{-t} b}|_{\Ec_{\phi^{-t} b}}) - C' |\Phi^t_{\phi^{-t} b}|_{\Fc_{\phi^{-t} b}}| \big)^{-1} \, ,  \quad \text{hence} \\
|f| \leq \frac{C'  |\Phi^t_{\phi^{-t} b}|_{\Fc_{\phi^{-t} b}}|}{c m(\Phi^t_{\phi^{-t} b}|_{\Ec_{\phi^{-t} b}}) - C' |\Phi^t_{\phi^{-t} b}|_{\Fc_{\phi^{-t} b}}|} \, .
\end{gather*}
Applying \eqref{eq:expSplitUniqueEstimate} and taking $t \to \infty$, we conclude that $f =0$, as desired.

\medskip

\noindent {\bf Proving $\Fc = \Fc'$. } As before, it suffices to check $\Fc \subseteq \Fc'$. For the sake of contradiction, let $f \in \Fc_b, b \in B$ be such that $f = e' + f'$ according to the splitting $\Ec_b' \oplus \Fc_b'$ and assume $e' \neq 0$. Writing $f_t = \Phi^t_b f, e_t' = \Phi^t_b e', f_t' = \Phi^t_b f'$, observe that
\[
d\bigg(\frac{f_t}{|f_t|}, \Ec_{\phi^t b}'\bigg) \leq \bigg| \pi_{\Fc_{\phi^t b}' \ds \Ec_{\phi^t b}'} \frac{f_t}{|f_t|} \bigg| = \frac{|f_t'|}{|f_t|} \leq \frac{|f_t'|}{|e_t'| - |f_t'|} \, .
\]
The right-hand ratio goes to zero by \eqref{eq:expSplitUniqueEstimate} since $e' \neq 0$, and so we obtain that $\inf_{b \in B} d(\Fc_b, \Ec_b') = 0$. By compactness, the infimum is attained-- this contradicts Claim \ref{cla:complementUniqueness}, however, and so we conclude $e' = 0$. Thus we have shown $f \in \Fc_b'$, as desired.
\end{proof}

}

The following is a characterization of exponential separation in terms of exponential growth rates of Gelfand numbers-- it generalizes a similar criterion developed by Bochi and Gourmelon for finite-dimensional linear cocycles \cite{bochi2009some}.

\begin{thm}[\cite{blumenthal2015characterization}]\label{thm:characterizeExpSep}
The following are equivalent.
\begin{itemize}
\item $\Phi$ has an exponential splitting of index $k$ for some $k \in \N$.
\item The inequality
\[
\sup_{\e \in [0,1]} c_{k + 1}(\Phi^t_{\phi^\e b}) \leq K e^{- \gamma t} c_k(\Phi^{t + 1}_b)
\]
holds for all $t \geq 0, b \in B$, where $K, \gamma > 0$ are constants.
\end{itemize}
Moreover, the exponential splitting $\Vc = \Ec \oplus \Fc$ of index $k$ satisfies
\begin{align}\label{eq:volumeAchieved}
\hat C^{-1} V_k(\Phi^t_b) \leq \det(\Phi^t_b | \Ec_b) \leq \hat C  V_k(\Phi^t_b)
\end{align}
and 
\begin{align}\label{eq:gelfandAchieved}
\hat C^{-1} c_{k + 1}(\Phi^t_b) \leq |\Phi^t_b |_{\Fc_b}| \leq \hat C c_{k + 1}(\Phi^t_b) \, .
\end{align}
for all $t \geq 0, b \in B$, where $\hat C \geq 1$ is a constant.
\end{thm}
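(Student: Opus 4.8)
The plan is to reduce everything to the equivalence of volume/Gelfand growth ratios via Lemmas \ref{lem:determinantAngleEst} and \ref{lem:equivVolumeGelfand}, together with the angle bounds of \S\ref{sec:banachPrelim}. First I would prove the ``forward'' implication: assume $\Phi$ has an exponential splitting $\Vc = \Ec \oplus \Fc$ of index $k$ with constants $K, \gamma$. The key point is to establish \eqref{eq:volumeAchieved}, i.e.\ that the $k$-dimensional subspace $\Ec_b$ nearly achieves the maximal volume growth $V_k(\Phi^t_b)$. Given any other $k$-dimensional subspace $G \subset \Bc$, I would split $G$ relative to $\Ec_b \oplus \Fc_b$ — more precisely, pick a filtration/basis of $G$ adapted to the decomposition $\Vc_b = \Ec_b \oplus \Fc_b$ so that $G$ is close to a subspace spanned by $j$ vectors in $\Ec_b$ and $k-j$ vectors in $\Fc_b$; then Lemma \ref{lem:determinantAngleEst} lets me factor $\det(\Phi^t_b|G)$ as (up to a $q$-dependent constant and a sine-of-angle factor) the product $\det(\Phi^t_b|\text{part in }\Ec)\cdot\det(\Phi^t_b|\text{part in }\Fc)$. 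Each factor coming from $\Fc$ is bounded by $\|\Phi^t_b|_{\Fc_b}\|^{k-j} \leq (Ke^{-\gamma t})^{k-j} m(\Phi^t_b|_{\Ec_b})^{k-j}$, and each factor from $\Ec$ by $\det(\Phi^t_b|\Ec_b)$ divided by a power of $m(\Phi^t_b|_{\Ec_b})$; combining, one sees that when $j < k$ the contribution is exponentially smaller than $\det(\Phi^t_b|\Ec_b)$, so for $t$ large $\Ec_b$ dominates, giving \eqref{eq:volumeAchieved}. (One should first handle the uniform lower bound $\inf_b \sin\theta(\Ec_b,\Fc_b) > 0$, which follows from continuity of $b \mapsto \pi_{\Ec_b\ds\Fc_b}$ and compactness of $B$, exactly as in the proof of Lemma \ref{lem:expSepUnique}.) Then \eqref{eq:gelfandAchieved} follows: the upper bound on $\|\Phi^t_b|_{\Fc_b}\|$ in terms of $c_{k+1}$ uses $\codim \Fc_b = k$ and the definition of $c_{k+1}$ directly, while the lower bound $c_{k+1}(\Phi^t_b) \leq \hat C \|\Phi^t_b|_{\Fc_b}\|$ comes from the variational characterization of Gelfand numbers plus the uniform angle bound (any codimension-$k$ subspace $F'$ meets $\Fc_b$ in something of dimension $\geq$ something controllable, so $\|\Phi^t_b|_{F'}\|$ cannot be much smaller than $\|\Phi^t_b|_{\Fc_b}\|$). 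Finally, feeding \eqref{eq:volumeAchieved} and Lemma \ref{lem:equivVolumeGelfand} into each other yields $c_k(\Phi^t_b) \asymp V_k(\Phi^t_b)/V_{k-1}(\Phi^t_b) \asymp m(\Phi^t_b|_{\Ec_b})$ up to constants, and then the Gelfand-number inequality in the statement is just a rewriting of $\|\Phi^t_b|_{\Fc_b}\| \leq Ke^{-\gamma t} m(\Phi^t_b|_{\Ec_b})$ together with the semiflow/continuity bounds that let one pass between $\Phi^t_{\phi^\e b}$, $\Phi^t_b$, and $\Phi^{t+1}_b$ (here (H3)--(H4) and compactness of $B$ give uniform bounds on $\|\Phi^s_b\|$ and on $m(\Phi^s_b|_{\Ec_b})$ for $s \in [0,1]$).

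For the ``reverse'' implication — the Gelfand inequality implies existence of an exponential splitting of index $k$ — this is precisely the content of the cited work \cite{blumenthal2015characterization}, so I would simply invoke it, or sketch it: the inequality $\sup_{\e}c_{k+1}(\Phi^t_{\phi^\e b}) \leq Ke^{-\gamma t}c_k(\Phi^{t+1}_b)$ is a uniform exponential gap in the $s$-number spectrum at the $k$-th place, and the standard construction produces $\Ec_b$ as the ``fast'' $k$-dimensional subspace (a graph transform / cone-field argument, using that the adapted subspaces achieving $V_k$ form a Cauchy sequence under pushforward because of the spectral gap) and $\Fc_b$ as the ``slow'' complement; continuity and forward invariance are then checked from the continuity hypotheses (H3)--(H4).

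The main obstacle I anticipate is the careful bookkeeping of constants in the first implication — specifically, showing that the subspace $\Ec_b$ \emph{uniformly} (in $b$ and $t$) nearly realizes $V_k(\Phi^t_b)$. The delicate point is that an arbitrary competitor $k$-plane $G$ need not split cleanly along $\Ec_b \oplus \Fc_b$; one must choose an adapted basis of $G$ (via Lemma \ref{lem:compExist} and the uniform angle bound), control how the sine-of-angle factors in Lemma \ref{lem:determinantAngleEst} degrade under the pushforward, and ensure the ``mixed'' terms with $j<k$ really are beaten by the exponential factor $Ke^{-\gamma t}$ uniformly. The rest — translating between volumes, determinants, Gelfand numbers, and minimum norms — is routine given the lemmas of \S\ref{sec:banachPrelim}, as is the shift between $\Phi^t_{\phi^\e b}$ and $\Phi^{t+1}_b$ via the compactness of $B$ and the continuity hypotheses.
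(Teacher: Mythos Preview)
The paper does not give its own proof of this theorem: it is quoted wholesale from \cite{blumenthal2015characterization}, as the bracketed citation in the theorem header indicates, and is used only as a black box (chiefly in the proof of Proposition \ref{prop:moveDownSubspace} and in \S\ref{sec:cleanUp}). So there is no in-paper argument to compare your sketch against; the ``paper's own proof'' is simply the citation.

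That said, a remark on your sketch of the forward direction: you have swapped the two halves of \eqref{eq:gelfandAchieved}. The inequality $c_{k+1}(\Phi^t_b) \leq |\Phi^t_b|_{\Fc_b}|$ is immediate from the definition of $c_{k+1}$ since $\codim \Fc_b = k$ --- this is the \emph{easy} direction, not the one requiring ``any codimension-$k$ subspace $F'$ meets $\Fc_b$ in something controllable''. It is the reverse inequality $|\Phi^t_b|_{\Fc_b}| \leq \hat C\, c_{k+1}(\Phi^t_b)$ that needs an argument showing $\Fc_b$ is nearly optimal among all codimension-$k$ subspaces, and your parenthetical reasoning belongs there. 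Your outline for \eqref{eq:volumeAchieved} via a case split on the number $j$ of basis vectors falling in $\Ec_b$ is the right shape, though in the Banach setting the step ``pick a basis of $G$ adapted to $\Ec_b \oplus \Fc_b$'' carries most of the weight and is not as routine as your phrasing suggests.
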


Lastly, we record the following consequence of Theorem \ref{thm:characterizeExpSep}, which will be used in \S \ref{sec:cleanUp} as part of an inductive procedure.

\begin{prop}\label{prop:moveDownSubspace}
Let $\Vc = \Ec \oplus \Fc$ be any exponentially separated splitting, and let $k > \dim \Ec$. Then $\Phi$ has an exponential splitting of index $k$ if and only if $\Phi|_\Fc$ has an exponential splitting of index $k - \dim \Ec$.
\end{prop}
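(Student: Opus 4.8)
The plan is to reduce both directions to the Gelfand-number criterion of Theorem \ref{thm:characterizeExpSep}. Write $j = \dim \Ec$, so that we must compare an index-$k$ exponential splitting of $\Phi$ with an index-$(k-j)$ exponential splitting of $\Phi|_\Fc$. The first step is to record how Gelfand numbers of $\Phi^t_b$ restricted to $\Fc_b$ relate to those of $\Phi^t_b$ on the whole fiber. Because $\Vc = \Ec \oplus \Fc$ is exponentially separated with $\dim \Ec = j$, the bottom $j$ "volume directions" of $\Phi^t_b$ are captured by $\Ec_b$ (this is exactly \eqref{eq:volumeAchieved}: $\det(\Phi^t_b|\Ec_b) \asymp V_j(\Phi^t_b)$), while the behaviour of $\Phi^t_b$ transverse to $\Ec_b$ is controlled by $\Fc_b$. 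Quantitatively I expect to prove, for all $m \geq 0$,
\[
\hat C^{-1} \, V_j(\Phi^t_b) \cdot V_m(\Phi^t_b|_{\Fc_b}) \leq V_{j + m}(\Phi^t_b) \leq \hat C \, V_j(\Phi^t_b) \cdot V_m(\Phi^t_b|_{\Fc_b}) \, ,
\]
with $\hat C$ depending only on $j + m$ and on the uniform angle bound $\inf_b \sin\theta(\Ec_b,\Fc_b) > 0$ (which holds by continuity and compactness of $B$). The upper bound is immediate from $V_{j+m}(\Phi^t_b) = \sup\{\det(\Phi^t_b|G): \dim G = j + m\}$ together with Lemma \ref{lem:determinantAngleEst} applied to $G$ decomposed along $\Ec_b$ and a transversal; the lower bound comes from choosing $G = \Ec_b \oplus G'$ with $G' \subset \Fc_b$ realizing $V_m(\Phi^t_b|_{\Fc_b})$ and again invoking Lemma \ref{lem:determinantAngleEst} plus the uniform angle bound. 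Combining this with Lemma \ref{lem:equivVolumeGelfand} (applied both on $\Bc$ and on the subspace $\Fc_b$, via the natural projection $\pi_{\Fc_b \ds \Ec_b}$ whose norm is uniformly bounded) yields, up to a uniform multiplicative constant and a bounded shift in the time parameter,
\[
c_{n}(\Phi^t_b|_{\Fc_b}) \asymp \frac{c_{n + j}(\Phi^t_b)}{c_j(\Phi^t_b)} \qquad (n \geq 1) \, ,
\]
and in particular $c_{k-j}(\Phi^{t+1}_b|_{\Fc_b}) \asymp c_k(\Phi^{t+1}_b)/c_j(\Phi^{t+1}_b)$ and $c_{k-j+1}(\Phi^t_{\phi^\e b}|_{\Fc_{\phi^\e b}}) \asymp c_{k+1}(\Phi^t_{\phi^\e b})/c_j(\Phi^t_{\phi^\e b})$.

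The second step is to feed these comparisons into the criterion of Theorem \ref{thm:characterizeExpSep}. Suppose $\Phi$ has an exponential splitting of index $k$; then
\[
\sup_{\e \in [0,1]} c_{k+1}(\Phi^t_{\phi^\e b}) \leq K e^{-\gamma t} c_k(\Phi^{t+1}_b)
\]
for all $t \geq 0, b \in B$. Since $\Ec$ is itself index-$j$ exponentially separated, the quantity $c_j(\Phi^t_b) \asymp \det(\Phi^t_b|\Ec_b) = m_{\Ec}$-volume growth is, up to uniform constants, submultiplicative-by-cocycle-comparison: precisely, using the cocycle identity $\Phi^{t+1}_b = \Phi^t_{\phi b} \circ \Phi^1_b$ restricted to the (isomorphically acted-on) subbundle $\Ec$, one gets $c_j(\Phi^{t+1}_b) \asymp c_j(\Phi^t_{\phi b}) \cdot c_j(\Phi^1_b)$ with the second factor bounded above and below uniformly in $b$ by (H3), compactness of $B$, and injectivity. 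Dividing the displayed inequality through by $c_j$ of the appropriate argument and absorbing the uniformly bounded correction factors into $K$ (and absorbing the $O(1)$ change of base point $\phi b$ versus $\phi^\e b$ using (H3)–(H4) and uniform continuity on the compact set $[0,1] \times B$), I obtain
\[
\sup_{\e \in [0,1]} c_{k-j+1}(\Phi^t_{\phi^\e b}|_{\Fc_{\phi^\e b}}) \leq K' e^{-\gamma t} c_{k-j}(\Phi^{t+1}_b|_{\Fc_b}) \, ,
\]
which is exactly the criterion of Theorem \ref{thm:characterizeExpSep} for the linear semiflow $\Phi|_\Fc$ on the bundle $\Fc$ at index $k - j$; hence $\Phi|_\Fc$ has an exponential splitting of index $k-j$. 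The converse direction runs the same computation backwards: an index-$(k-j)$ splitting of $\Phi|_\Fc$ gives the Gelfand inequality for $\Phi|_\Fc$, and multiplying back through by $c_j(\Phi^t_b) \asymp \det(\Phi^t_b|\Ec_b)$ (again using near-multiplicativity of this quantity, valid because $\Ec$ is exponentially separated) restores the Gelfand inequality for $\Phi$ at index $k$. Throughout I use that $\Fc$ is a continuously-varying subbundle of finite codimension-modification type so that Remark \ref{rmk:subbundleFormulation} applies and Theorem \ref{thm:characterizeExpSep} is legitimately available for $\Phi|_\Fc$ — strictly speaking $\Fc$ need not be finite-codimensional, but one may realize $\Fc$ inside $\Bc$ via the uniformly bounded family of projections $\pi_{\Fc_b \ds \Ec_b}$ and transport the whole setup to $\Fc$, which is again a Banach bundle over $B$ satisfying (H1)–(H4); I will spell this identification out.

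The main obstacle I anticipate is the first step: proving the two-sided volume splitting estimate $V_{j+m}(\Phi^t_b) \asymp V_j(\Phi^t_b) \cdot V_m(\Phi^t_b|_{\Fc_b})$ with constants that are genuinely uniform in $t, b$ and $m \leq k$. The subtlety is that the supremum defining $V_{j+m}$ is over arbitrary $(j+m)$-dimensional subspaces $G$, which need not respect the splitting $\Ec_b \oplus \Fc_b$ at all; one must argue that the "worst" such $G$ can be replaced, at bounded cost, by one of the form $\Ec_b \oplus G'$ with $G' \subset \Fc_b$, and this is where the uniform angle bound $\inf_b \sin\theta(\Ec_b,\Fc_b) > 0$ and Lemma \ref{lem:determinantAngleEst} do the real work, together with the fact (from \eqref{eq:volumeAchieved}) that $\Ec_b$ already realizes the bottom $j$ volume directions up to a uniform constant, so that "adding $\Ec_b$" to any $G'$ is essentially free. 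A secondary, more bookkeeping-type obstacle is tracking the $O(1)$ shifts in the time variable and base point produced by the $t \mapsto t+1$ and $\e \in [0,1]$ features of the criterion in Theorem \ref{thm:characterizeExpSep}, together with the near-multiplicativity of $c_j(\Phi^t_b)$; these are handled by (H3)–(H4), compactness of $B$ and $[0,1]$, and injectivity of the time-one maps, but must be stated carefully to be sure all constants stay uniform.
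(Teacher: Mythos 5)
Your overall plan --- reduce to the Gelfand-number criterion of Theorem~\ref{thm:characterizeExpSep} by relating $c_n(\Phi^t_b|_{\Fc_b})$ to $c_{n+j}(\Phi^t_b)$ via the volume/determinant machinery of \S\ref{subsec:volumesGelfand} --- is exactly the paper's, and the one-sided volume inequality you propose ($V_{j+m}(\Phi^t_b) \gtrsim V_j(\Phi^t_b)\,V_m(\Phi^t_b|_{\Fc_b})$, via Lemma~\ref{lem:determinantAngleEst}, the uniform angle bound, and \eqref{eq:volumeAchieved}) is precisely the paper's central estimate. However, the key displayed relation you extract is wrong: you claim
$c_n(\Phi^t_b|_{\Fc_b}) \asymp c_{n+j}(\Phi^t_b)/c_j(\Phi^t_b)$,
but the correct relation, and what actually follows from $V_{j+m} \asymp V_j\cdot V_m(\,\cdot\,|_{\Fc})$ together with $V_q \asymp \prod_{i\le q} c_i$, is
$c_n(\Phi^t_b|_{\Fc_b}) \asymp c_{n+j}(\Phi^t_b)$ --- the factor $c_j$ cancels. (Sanity check in Hilbert space: if $T = \mathrm{diag}(\sigma_1,\sigma_2,\dots)$ and $\Ec=\langle e_1,\dots,e_j\rangle$, then $c_n(T|_{\Fc}) = \sigma_{j+n} = c_{n+j}(T)$, not $\sigma_{j+n}/\sigma_j$.) Because of this slip, your subsequent paragraph --- the near-multiplicativity of $c_j(\Phi^t_b)$, dividing the Gelfand inequality by $c_j$ of the ``appropriate'' argument, and shuffling base points via (H3)--(H4) --- is not only unnecessary but is effectively a compensating error: it reintroduces the stray $c_j$ and cancels it, so the conclusion comes out right for the wrong reason. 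With the correct two-sided relation $c_n(\Phi^t_b|_{\Fc_b}) \asymp c_{n+j}(\Phi^t_b)$, the displayed Gelfand inequality at index $k$ translates directly into the one at index $k-j$ for $\Phi|_\Fc$, with no near-multiplicativity or base-point bookkeeping needed.

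You also flag, correctly, that the ``hard direction'' $V_{j+m}(\Phi^t_b) \lesssim V_j(\Phi^t_b)\,V_m(\Phi^t_b|_{\Fc_b})$ is subtle because the optimizing $(j+m)$-plane $G$ need not respect the splitting. The paper sidesteps this entirely: the lower bound $c_{l+k}(\Phi^t_b) \leq c_l(\Phi^t_b|_{\Fc_b})$ is immediate from the definition of Gelfand numbers (the infimum defining $c_l(\Phi^t_b|_{\Fc_b})$ is over the strictly smaller class of subspaces $F\subset\Fc_b$ of codimension $l+k-1$ in $\Bc$, hence dominates the unrestricted infimum $c_{l+k}(\Phi^t_b)$), and the upper bound is then obtained from the single one-sided volume inequality $V_{l+k}(\Phi^t_b) \gtrsim V_k(\Phi^t_b)\,V_l(\Phi^t_b|_{\Fc_b})$ combined with Lemma~\ref{lem:equivVolumeGelfand} and the already-proven lower bounds at smaller indices. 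This is both simpler and avoids the uniformity-of-constants issue that you correctly worry about. I'd recommend replacing the $c_j$-division machinery with the direct inclusion argument for the lower bound, after which the rest of your outline goes through cleanly.
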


\begin{proof}
By Theorem \ref{thm:characterizeExpSep}, it suffices to establish the following. Let $\Vc = \Ec \oplus \Fc$ be an exponential splitting and let $k = \dim \Ec$. Then, for every $l \geq 1$ there is a constant $\hat C_l$ such that for any $b \in B, t \geq 0$, we have
\[
c_{l + k} (\Phi^t_b) \leq  c_l(\Phi^t_b|_{\Fc_b}) \leq \hat C_l c_{l + k} (\Phi^t_b) \, .
\]

To start, observe that
\[
c_l(\Phi^t_b|_{\Fc_b}) = \inf\{ |\Phi^t_b|_F| : F \subset \Fc_b, \codim F = l + k - 1 \} \geq \inf \{|\Phi^t_b|_F| : \codim F = l + k - 1\} = c_{l + k}(\Phi^t_b)
\]
for every $l \geq 1$. Thus it suffices to prove the upper bound on $c_l(\Phi^t_b|_{\Fc_b})$.

Let $\hat F \subset \Fc_b$ be a $l$-dimensional subspace for which $\det(\Phi^t_b | \hat F) \geq \frac12 V_l(\Phi^t_b|_{\Fc_b})$. Using Lemma \ref{lem:determinantAngleEst}, we estimate
\begin{align*}
V_{l + k}(\Phi^t_b) & \geq \det(\Phi^t_b |\Ec_b  \oplus \hat F  ) \geq C^{-1} |\pi_{\Ec_{\phi^t b} \ds \Fc_{\phi^t b}}|^{-k} \det(\Phi^t_b | \Ec_b) \cdot \det(\Phi^t_b | \hat F) \\ 
& \geq C^{-1} V_k(\Phi^t_b) \cdot V_l(\Phi^t_b |_{\Fc_b}) \, ,
\end{align*}
where $C > 0$ is a generic constant independent of $b, t$. In the last line we have used \eqref{eq:volumeAchieved} and that $\sup_b |\pi_{\Ec_b \ds \Fc_b}| < \infty$.

We now apply Lemma \ref{lem:equivVolumeGelfand} to the left and right hand sides, obtaining
\[
C \cdot \prod_{i = 1}^{l + k} c_i(\Phi^t_b) \geq \prod_{i = 1}^k c_i(\Phi^t_b) \cdot \prod_{i = 1}^l c_i(\Phi^t_b|_{\Fc_b}) \geq \bigg( \prod_{i = 1}^{k + l - 1} c_i(\Phi^t_b) \bigg)  \cdot c_{k + l} (\Phi^t_b|_{\Fc_b}) 
\]
on applying the lower bound on $c_{l'}(\Phi^t_b)$ for $1 \leq l' < l$. On canceling out we conclude the desired upper bound on $c_l(\Phi^t_b|_{\Fc_b})$.

\end{proof}

\section{Asymptotically compact attractors and splittings}\label{sec:hard}







Our goal in \S \ref{sec:hard} is to prove the following `main' proposition.

\begin{prop}\label{prop:compSubspace}
Let $\phi$ be a chain-transitive flow on a compact metric space $B$, $\Bc$ a separable Banach space, and $\Phi$ a linear semiflow on $\Vc = B \times \Bc$ satisfying (H1) -- (H4) in \S \ref{sec:introResults}. Let $\Ac$ be an asymptotically compact attractor for $\Phi$. Then, $\Ec = \P^{-1}\Ac, \Fc = \P^{-1} \Ac^*$ are continuous, complementary subbundles of $\Vc$ of finite dimension and codimension, respectively. 
\end{prop}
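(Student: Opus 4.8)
The plan is to show first that $\Ec = \P^{-1}\Ac$ has finite-dimensional fibers of constant dimension, then that it is a continuous subbundle, then that $\Fc = \P^{-1}\Ac^*$ is a continuous complementary subbundle, and finally assemble the pieces. For the finite-dimensionality claim, the key point is that $\Ac$ is compact in $\P\Vc$, so its unit-sphere lift $S_\Ec = \{(b,v)\in\Ec : |v|=1\}$ is compact (it is a closed subset of the compact set $\pi_B^{-1}(B)\cap\{$unit vectors projecting into $\Ac\}$, compactness coming from asymptotic compactness of $\Ac$ together with $\Phi^t(\Ec)=\Ec$ by Lemma \ref{lem:asympCompact}(c)). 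A compact unit sphere forces $\Ec_b$ to be finite-dimensional for every $b$; since $\Phi^t_b:\Ec_b\to\Ec_{\phi^t b}$ is an injective linear map and, by invariance $\P\Phi^t\Ac=\Ac$, also surjective, the function $b\mapsto\dim\Ec_b$ is locally constant along orbits, and I would use chain transitivity of $\phi$ on $B$ (more precisely, that $B$ is connected, which follows from chain transitivity of a flow on a compact metric space — or directly a semicontinuity-plus-invariance argument) to conclude $\dim\Ec_b \equiv d$ for a fixed finite $d$. With constant finite fiber dimension and compact unit sphere, Lemma \ref{lem:compactSubbundle} immediately gives that $\Ec$ is a continuous subbundle of $\Vc$.

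Next I would analyze $\Fc = \P^{-1}\Ac^*$. By Lemma \ref{lem:dual}, $\Ac^*$ is the repeller dual of $\Ac$ and is closed, hence $\Fc$ is a closed subset of $\Vc$ with $\Fc_b$ a (closed) linear subspace for each $b$ — the linearity of $\Fc_b$ needs checking, which I would do via the characterization $\Ac^* = \{x : \omega(x)\cap\Ac=\emptyset\}$ together with the linear structure of $\Phi$: if $\P v, \P w \in \Ac^*_b$ then no positive combination can develop an $\omega$-limit meeting $\Ac$, because $\Ac$ being exponentially-separated-type (equivalently, having a complementary invariant direction) means the projection onto $\Ec$ would have to grow dominantly if it were nonzero. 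More carefully, the natural route is to first establish the complementarity $\Vc_b = \Ec_b \oplus \Fc_b$ at each $b$, and deduce the subspace and continuity properties of $\Fc$ from that. To get complementarity: a vector $v\in\Vc_b\setminus\{0\}$ either has $\P$-orbit with $\omega$-limit meeting $\Ac$ or not. Using the attractor structure one shows $\P v$ has $\omega(\P v)\cap\Ac\ne\emptyset$ iff $\pi_{\Ec_b\ds \Fc_b}v\ne 0$ — but this presupposes the splitting. The cleaner logical order: show $\Ec_b \cap \Fc_b = \{0\}$ directly (a nonzero vector in $\Ec_b$ has all forward orbit in $\Ec$, hence $\omega$-limit in $\Ac$, so it cannot lie in $\Ac^*$), and then show $\Ec_b + \Fc_b = \Vc_b$ by a dimension/codimension count: produce a $d$-codimensional complement using Lemma \ref{lem:compExist}, and show any $v$ is forced into $\Ec_b \oplus \Fc_b$ by examining whether $\P\Phi^t(\P v)$ stays away from $\Ac$ eventually. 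This is where I expect the main work.

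The main obstacle, as anticipated by the authors' own remarks, is precisely showing that $\Fc$ has \emph{finite codimension} and that $\Vc = \Ec \oplus \Fc$ — i.e., that $\Ac^*$ is genuinely the projectivization of a complementary subbundle, not something smaller. Over the base point $b$, one must show: for every $v\in\Vc_b$, writing (once a complement is known to exist) $v = e+f$ with $e\in\Ec_b$, if $e\ne 0$ then $\P v \notin \Ac^*_b$, and if $e = 0$ then $\P v\in\Ac^*_b$. The first implication should follow from the attractor property: the $\Ec$-component dominates asymptotically — but proving the $\Ec$-component of a general vector dominates requires an exponential-separation-type estimate that we do not yet have at this stage (that is Theorem \ref{thm:equiv}(a)'s conclusion, being bootstrapped here). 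The resolution is to use the attractor/repeller \emph{dynamics} rather than growth rates: since $\Ac$ attracts an open neighborhood $U\supset\Ac$ in $\P\Vc$ and $\Ac^*$ is the repeller of $V = \P\Vc\setminus\overline{\P\Phi([T,\infty)\times U)}$, I would fix $v$ with nonzero $\Ec$-component $e$, show $\P(\Phi^t_b v)$ converges in the projective metric towards $\P\Ec$ (using Lemma \ref{lem:projectiveEstimate} and that the $\Ec$-direction is finite-dimensional, invertible, and its orbit eventually enters $U$), conclude $\omega(\P v)\cap\Ac\ne\emptyset$, hence $\P v\notin\Ac^*$. The requisite convergence $d_\P(\P\Phi^t_b v, \P\Ec) \to 0$ is the technical heart: here one invokes asymptotic compactness of $\Ac$ to extract limits, uses injectivity and the isomorphism property of $\Phi^t|_\Ec$ to control the $\Ec$-component from below, and uses that any $\omega$-limit of $\{\Phi^t_b v/|\Phi^t_b v|\}$ whose projective class meets $U$ must have its limit in $\Ac$. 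For codimension-finiteness of $\Fc$, once $\Ec_b\cap\Fc_b=\{0\}$ and $\Ec_b+\Fc_b = \Vc_b$ are both established, $\Fc_b$ has codimension exactly $d$; continuity of $b\mapsto\Fc_b$ then follows from continuity of $b\mapsto\Ec_b$, closedness of $\Fc$, Lemma \ref{lem:projectionContinuity}, and Lemma \ref{lem:compOpen} (complementarity is an open, and here globally valid, condition). Finally, uniform boundedness of $\sup_b|\pi_{\Ec_b\ds\Fc_b}|$ — needed for the Whitney splitting to be genuine — follows from compactness of $B$ and continuity of the projections.
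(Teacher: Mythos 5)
There are substantive gaps, the two most serious being (i) you never establish that the fibers $\P^{-1}\Ac\cap\Vc_b$ and $\P^{-1}\Ac^*\cap\Vc_b$ are actually linear subspaces, and (ii) you are missing the key idea the paper uses to prove that $\Fc_b$ genuinely complements $\Ec_b$.

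On (i): your argument for $\Ec$ treats ``$\Ec_b$ finite-dimensional'' and its sequel as if $\Ec_b$ were already a subspace, but $\P^{-1}\Ac\cap\Vc_b$ is, a priori, only a closed cone. That it is a subspace is precisely what Lemma~\ref{lem:attractorSubspace} establishes, and the proof there is not trivial: it works inside an arbitrary two-dimensional section $L\subset\Vc_b$, shows (via the estimate~\eqref{eq:projectiveNormComp}, the backward orbit furnished by Lemma~\ref{lem:asympCompact}(c), and asymptotic compactness) that a boundary point of $\Ac\cap\P L$ in $\P L$ must satisfy the decay~\eqref{eq:weakDecayAttractor}, and then deduces $\Ac\cap\P L$ is either $\emptyset$, a single point, or all of $\P L$. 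The same issue recurs for $\Fc_b$, where Lemma~\ref{lem:repellerSubspace} carries out the analogous argument. Your proposal notices the circularity risk for $\Fc$ (``but this presupposes the splitting'') but does not resolve it.

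On (ii): the heart of the paper's proof is Lemma~\ref{lem:repellerComplements}. To show $\P^{-1}\Ac^*_b$ is a complement to $\Ec_b$ rather than something strictly smaller, the paper pulls back a uniformly-angled complement $F_n'$ to $\Ec_{\phi^n b}$ along $\Phi^n_b$, checks that $\P F_n := \P(\Phi^n_b)^{-1}F_n'\subset V_\e$ for large $n$ (so the projection norms $|\pi_n|$ are uniformly bounded by Lemma~\ref{lem:projectiveEstimate}), realizes each $F_n$ as the graph of a uniformly bounded operator $G_n:F\to\Ec_b\cong\R^{\dim\Ec}$, and then extracts a strong-operator-topology limit $G$ of a subsequence using Lemma~\ref{lem:arzela} (Banach--Alaoglu applied coordinate-by-coordinate, which is where separability of $\Bc$ enters). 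The graph of $G$ is shown to coincide with $\P^{-1}\Ac^*_b$. Your proposed substitute---``produce a $d$-codimensional complement using Lemma~\ref{lem:compExist}, and show any $v$ is forced into $\Ec_b\oplus\Fc_b$''---is circular: the dimension/codimension count presupposes that $\Fc_b$ is already a closed subspace of codimension $d$, which is what needs to be proved. Your later paragraph (``fix $v$ with nonzero $\Ec$-component $e$\ldots'') again assumes a decomposition $v=e+f$ with $f\in\Fc_b$ that has not yet been produced.

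Finally, on constancy of $\dim\Ec_b$: invoking connectedness of $B$ plus local constancy along orbits does not suffice, because $b\mapsto\dim\Ec_b$ is not known to be semicontinuous at this stage. The paper instead proves $\dim\Ec_b\le\dim\Ec_{b'}$ for \emph{all} $b,b'$ by transporting a nearly-orthonormalized basis of $\Ec_b$ along an explicit $(\e,T)$-chain from $b$ to $b'$ (using $\Omega(\Ac_b)\subset\Ac$ from Lemma~\ref{lem:chain1}), and then recognizing a $d$-dimensional subspace inside $\P^{-1}\Ac_{b'}$ via the uniform $N[\cdot]$ bound of Lemma~\ref{lem:compExist}; only after constancy of the fiber dimension does Lemma~\ref{lem:compactSubbundle} yield continuity. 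So the chain-transitivity hypothesis is used directly, not by way of connectedness.
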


We assume without further mention all the hypotheses of Proposition \ref{prop:compSubspace} for the remainder of \S \ref{sec:hard}. The following is an outline of the proof.

\begin{enumerate}
\item In \S \ref{subsubsec:attractor}, we show that when $\Ac$ is an asymptotically compact attractor for $\Phi$, we have that $\Ec := \P^{-1} \Ac$ is a continuous finite-dimensional subbundle of $\Vc$ (Lemma \ref{lem:attractorCtsSubbundle}).
\item In \S \ref{subsubsec:dualRepeller}, we show that the dual repeller $\Ac^*$ is of the form $\Fc = \P^{-1} \Ac^*$, where $\Fc \subset \Vc$ is a closed subset which meets each fiber $\Vc_b$ in a subspace complementary to $\Ec_b = \Vc_b \cap \Ec$. At this point, we have not yet shown that $\Fc$ is a continuous subbundle.
\item In \S \ref{subsubsec:exponentialSep}, we deduce that $\Ec, \Fc$ are exponentially separated with uniform estimates across all of $\Vc$.
\item In \S \ref{subsubsec:repellerContinuity}, we deduce the continuity of $\Fc$ from the exponential separation of $\Ec, \Fc$.
\end{enumerate}

\subsection{Attractors for linear semiflows}\label{subsubsec:attractor}

We first study attractors for the projectivized semiflow on $\P \Vc$. The proofs in this section follow of \cite{salamon1988flows} and Chapter 5 of \cite{colonius2012dynamics}.

Let $(\Ac, U)$ be an asymptotically compact attractor pair. Note that any $v \in \Vc$ for which $\P v \in \Ac$ has a negative continuation by Lemma \ref{lem:asympCompact}.

\begin{lem}\label{lem:attractorSubspace}
Let $(\Ac, U)$ be an asymptotically compact attractor pair for $\P \Phi$. Write $\Ac_b = \Ac \cap \P \Vc_b$ for $b \in B$.
\begin{itemize}
\item[(a)] For each $b \in B$, we have that $\P^{-1} \Ac_b \subset \Vc_b$ is a finite-dimensional linear subspace.
\item[(b)] For any $v, v' \in \Vc_b \setminus B \times \{0\}$, $\P v \in \Ac, \P v' \notin \Ac$ for which $v'$ has a negative continuation, we have that
\begin{align}\label{eq:weakDecayAttractor}
\lim_{t \to - \infty} \frac{|\Phi^t_b v|}{|\Phi^t_b v'|} = 0 \, .
\end{align}
\end{itemize}
\end{lem}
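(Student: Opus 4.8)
\textbf{Proof plan for Lemma \ref{lem:attractorSubspace}.}

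The plan is to exploit the linearity of $\Phi^t_b$ together with the attractor/repeller structure, following the finite-dimensional template in \cite{salamon1988flows} and Chapter 5 of \cite{colonius2012dynamics}, while being careful about negative continuations (which exist on $\Ec$ by Lemma \ref{lem:asympCompact}(c), since $\Ac$ is forward invariant and injectivity gives uniqueness). For part (a), I would first show that $\P^{-1}\Ac_b \cup \{0\}$ is closed under scalar multiplication (trivial, since $\P$ forgets scalars) and then that it is closed under addition. To prove closure under addition, take $v, w \in \Vc_b$ with $\P v, \P w \in \Ac$; both have negative continuations. Consider the projective trajectory of $\P(v+w)$ in backward time. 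The key observation is that the repeller dual $\Ac^*$ attracts backward trajectories that start off $\Ac$ (via Lemma \ref{lem:repellersRepel} and the associated $\omega^*$-characterization): if $\P(v+w)\notin\Ac$, then running backward, $\P\Phi^{-t}_b(v+w) = \P(\Phi^{-t}_b v + \Phi^{-t}_b w)$ would have to approach $\Ac^*$, which is disjoint from $\Ac$ (Lemma \ref{lem:dual}). But I want to argue that the backward trajectory of $\P(v+w)$ stays "trapped" near $\Ac$: since $\P v, \P w$ both lie in $\Ac$ and $\Ac$ is compact and backward invariant inside $\P\Vc_b$ (a copy of $\P\Bc$), the span of the negative trajectories is controlled. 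The cleanest route: show $\Ec_b := \P^{-1}\Ac_b\cup\{0\}$ has the property that $\Phi^t_b$ maps it into $\Ec_{\phi^t b}$ for all $t\in\R$, then use a dimension/compactness argument — if $\Ec_b$ were not a subspace, one could find two elements whose sum escapes $\Ac$, contradicting that the backward orbit of such a sum must both accumulate on $\Ac^*$ (since it's off $\Ac$) and be expressible via the bounded backward dynamics on $\Ac$. Finite-dimensionality then follows because $\Ac$ is compact while $\P\Bc$ is not locally compact: an infinite-dimensional linear subspace would have a non-precompact projective unit sphere, contradicting compactness of $\Ac_b$ (a closed subset of the compact $\Ac$).

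For part (b), the strategy is the standard "attractor beats repeller in backward time" estimate. Given $\P v\in\Ac$ and $\P v'\notin\Ac$ with $v'$ admitting a negative continuation, set $w_t = \Phi^t_b v$, $w_t' = \Phi^t_b v'$ for $t\le 0$. Decompose $v' = v'_{\Ec} + v'_{\Fc}$ according to a complementary splitting at $b$ (once we know, from part (2)/(3) of the outline or directly here, that $\P^{-1}\Ac_b$ has a closed complement meeting each fiber — though for this lemma I may only need the weaker fact that $v'\notin\P^{-1}\Ac_b$, so its backward orbit leaves every neighborhood of $\Ac$). The point is: $\P w'_t \to \Ac^*$ as $t\to-\infty$ while $\P w_t \in \Ac$ for all $t$, and $\Ac, \Ac^*$ are at positive projective distance on the compact set $\Ac\cup\Ac^*$... but actually $\Ac^*$ need not be compact, so instead I use: for $\e = \tfrac12\dist(\P v', \Ac)$ appropriately, the backward orbit of $\P v'$ eventually enters and stays in $V_\e = \{x : d(x,\Ac) > \e\}$ (Lemma \ref{lem:repellersRepel}), so $\P w'_t$ stays uniformly away from $\P w_t \in \Ac$ in the projective metric for all $t\le -T$. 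Translating a uniform lower bound on $d_\P(\P w'_t, \P w_t)$ into a statement about norms via Lemma \ref{lem:projectiveEstimate}: writing $w_t = \alpha_t(\text{unit rep of }\P w_t)$, the projective distance being bounded below forces $|w'_t - \beta w_t|/|w'_t| \ge c$ for all scalars $\beta$, i.e., the component of $w'_t$ "parallel to $w_t$" cannot dominate; more to the point, combining this with the fact that $\P v\in\Ac$ so $\P w_t\in\Ac$ and hence $w_t$ lies in the finite-dimensional backward-invariant bundle $\Ec$, while a genuine exponential gap must separate $\Ec$ from the direction of $w'_t$. The cleanest argument: by asymptotic compactness, $\{\P w'_t\}$ does not accumulate on $\Ac$ in backward time, and $\{\P w_t\}\subset\Ac$; if the ratio $|w_t|/|w'_t|$ did not tend to $0$ along some sequence $t_n\to-\infty$, then $\P(w_{t_n} + c_n w'_{t_n})$ (suitable $c_n$) would accumulate somewhere in $\Ac$ (by compactness, rescaling, and that $w_{t_n}$ dominates or is comparable), but this point also equals $\P\Phi^{t_n}_b(v + c_n' v')$ whose forward orbit under $\P\Phi$ returns to $\P v$ or $\P v'$ — leading to $\P v'\in\Ac$ by Lemma \ref{lem:attractor1}, a contradiction. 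I would formalize this rescaling-and-contradiction argument carefully.

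The main obstacle I anticipate is the rescaling/compactness argument in part (b): in finite dimensions one extracts convergent subsequences of unit vectors freely, but here the projective bundle $\P\Vc$ is not locally compact, so I must route every limit extraction through either (i) the asymptotic compactness of the pair $(\Ac, U)$ — which only gives precompactness of $\{\psi^{t_n}x_n\}$ for $\{x_n\}\subset U$ and $t_n\to+\infty$, the \emph{forward} direction — or (ii) the compactness of $\Ac$ itself and backward invariance of $\Ec$. Reconciling the backward-time nature of \eqref{eq:weakDecayAttractor} with the forward-time nature of asymptotic compactness is the delicate point: the resolution is to note that $w_t = \Phi^t_b v$ for $t\le 0$ lives in $\Ec$, which is finite-dimensional (part (a)), so backward-time precompactness of the $\P w_t$ is automatic there, whereas $\P w'_t$ is controlled by Lemma \ref{lem:repellersRepel}. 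So part (a) genuinely must come first, and the finite-dimensionality it provides is what rescues the compactness arguments in part (b). A secondary technical point is verifying that $\P^{-1}\Ac_b \cup \{0\}$ is closed — this follows from $\Ac$ being compact (hence closed) and $\P$ being continuous with the convention that we adjoin $0$.
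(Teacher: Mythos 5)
There is a genuine gap in your plan for part (a), and a related misdirection in part (b).

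For part (a), your stated mechanism is: if $\P(v+w)\notin\Ac$, then the backward orbit of $\P(v+w)$ must accumulate on $\Ac^*$, yet ``the span of the negative trajectories is controlled'' by the fact that $\P\Phi^{-t}_b v$ and $\P\Phi^{-t}_b w$ both stay in $\Ac$. But the position of $\P(\Phi^{-t}_b v + \Phi^{-t}_b w)$ inside the two-dimensional plane $\Phi^{-t}_b L$ is governed by the relative magnitudes $|\Phi^{-t}_b v|$ vs.\ $|\Phi^{-t}_b w|$, which are a priori uncontrolled, so there is no reason this point stays near $\Ac$; and $\omega^*$ of a point off $\Ac$ may simply be empty in the non--locally-compact setting. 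The paper's proof does not try to track $\P(v+w)$ at all. Instead it fixes a two-dimensional plane $L\subset\Vc_b$ and proves that $\P L\cap\Ac$ is either empty, a single point, or all of $\P L$; linearity of $\P^{-1}\Ac_b$ is then an immediate consequence (any two independent elements of $\P^{-1}\Ac_b$ span an $L$ with $\P L\cap\Ac$ containing two points, hence $\P L\subset\Ac$). The engine for that trichotomy is the \emph{boundary-point} version of \eqref{eq:weakDecayAttractor}: one first proves the decay assuming $\P v$ is a boundary point of $\P L\cap\Ac$ relative to $\P L$. This framing is what your plan is missing and what makes the contradiction close.

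This also surfaces in part (b): you aim for the contradiction ``$\P v'\in\Ac$,'' but the rescaling estimate does not give that. What it gives, under the contradiction hypothesis $|\Phi^{-t_n}_b v'|\le K|\Phi^{-t_n}_b v|$, is the projective bound $d_\P(\P\Phi^{-t_n}_b(v+cv'),\P\Phi^{-t_n}_b v)\le 2K|c|$ (the explicit computation \eqref{eq:projectiveNormComp}); for small $|c|$ this puts $\P\Phi^{-t_n}_b(v+cv')\in U$, and then forward asymptotic compactness applied to the constant sequence $\P\Phi^{t_n}_{\phi^{-t_n}b}\big(\P\Phi^{-t_n}_b(v+cv')\big)\equiv\P(v+cv')$ gives $\P(v+cv')\in\Ac$ for all small $c$. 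That contradicts \emph{only} the hypothesis that $\P v$ is a boundary point of $\P L\cap\Ac$ in $\P L$ --- it does not directly yield $\P v'\in\Ac$. The general case of (b) is then obtained \emph{after} (a): once $\P^{-1}\Ac_b$ is known to be a subspace, $\P L\cap\Ac=\{\P v\}$ for the plane $L$ spanned by $v,v'$, so $\P v$ is automatically a boundary point. You correctly identified that forward asymptotic compactness (running backward and then forward to recover a constant) is the right way to extract limits, and that (a) and (b) are intertwined, but the trichotomy for $\P L\cap\Ac$ and the boundary-point hypothesis are the missing structural pieces that make the argument go through.
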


\begin{proof}
Without loss, let $v \in \P^{-1} \Ac_b, v' \in \Vc_b \setminus \P^{-1} \Ac$ be unit vectors, and assume that $\P v'$ has negative continuation. Throughout we let $L \subset \Vc_b$ denote the two-dimensional subspace of vectors spanned by $v, v'$. It follows by linearity that any vector in $L$ possesses a negative continuation. 

Let us assume in addition that $\P v$ is a boundary point of $\P L \cap \Ac$ relative to $\P L$: our first step is to prove \eqref{eq:weakDecayAttractor} in this special case. For this, we take on the following contradiction hypothesis:  
\[
\limsup_{t \to - \infty} \frac{|\Phi^t_b v|}{|\Phi^t_b v'|}  > 0 \, .
\]
Equivalently, there is a constant $K > 0$ and a subsequence $t_n \to \infty$ such that for any $n$,
\[
|\Phi^{-t_n}_b v'|  \leq K |\Phi^{-t_n}_b v| \, .
\]
Let $c \in \R$ be arbitrary. We estimate:
\begin{align}\label{eq:projectiveNormComp}\begin{split}
d_\P&( \P \Phi^{- t_n}_b (v + c v'), \P \Phi^{- t_n}_b v) \leq \bigg| \frac{\Phi^{- t_n}_b (v + c v')}{|\Phi^{- t_n}_b (v + c v')|} - \frac{\Phi^{- t_n}_b v}{|\Phi^{- t_n}_b v|}  \bigg| \\
& \leq \frac{1}{|\Phi^{- t_n}_b (v + c v')| \cdot |\Phi^{- t_n}_b v|} \bigg( \bigg| |\Phi^{- t_n}_b v| \cdot \Phi^{- t_n}_b (v + c v') - |\Phi^{- t_n}_b (v + c v')| \cdot \Phi^{- t_n}_b (v + c v') \bigg| \\
& + \bigg|  |\Phi^{- t_n}_b (v + c v')| \cdot \Phi^{- t_n}_b (v + c v') - |\Phi^{- t_n}_b (v + c v')| \cdot \Phi^{- t_n}_b v \bigg|  \bigg)  \leq 2 |c| \frac{|\Phi^{- t_n} v'|}{|\Phi^{- t_n} v|} \, .
\end{split}\end{align}
Applying the contradiction hypothesis, we obtain
\[
d_\P( \P \Phi^{- t_n}_b (v + c v'), \P \Phi^{- t_n}_b v) \leq 2 K |c|
\]
for all $n$. Noting that $\P \Phi^{- t_n}_b v \in \Ac$ for all $n$, it follows that $\P \Phi^{- t_n}_b (v + c v') \in U$ for all $n$ when $|c|$ is chosen sufficiently small. Fixing such a $c$ and letting $v_n = \Phi_b^{- t_n} (v + c v')$, note that $\{\P v_n\} \subset U$, hence by asymptotic compactness it follows that all limit points of $\{\P \Phi_{\phi^{-t_n} b}^{t_n} \P v_n\}_n$ (of which there is at least one) belong to $\Ac$. But $\P \Phi^{t_n}_{\phi^{-t_n} b} \P v_n \equiv \P (v + c v')$ for all $n$, and so we deduce that $\P (v + c v') \in \Ac$ for all $c$ sufficiently small. This contradicts the assumption that $\P v$ is a boundary point of $\Ac \cap \P L$ in $\P L$. Thus \eqref{eq:weakDecayAttractor} holds for all such $v'$ in the case when $\P v$ is a boundary point of $\P L \cap \Ac$ relative to $\P L$.

\medskip

In the next step, we show that for any two-dimensional subspace $L \subset \Vc_b$, we have that $\Ac \cap \P L$ consists of a single point, if it contains a boundary point $\P v$ as above. Note first that either $\P v$ is the only point in $\P L$ with negative continuation, or that every point of $\P L$ has a negative continuation. In the former case there is nothing to prove, as every point of $\Ac$ possesses a negative continuation by Lemma \ref{lem:asympCompact}, part (c). Assuming the latter, let $\P v' \in \P L \setminus \Ac$ and note that any element of $\P L \setminus \Ac$ is of the form $v' + c v$ for some $c \in \R$. It follows from \eqref{eq:weakDecayAttractor} and a computation similar to that in \eqref{eq:projectiveNormComp} that
\begin{align}\label{eq:backwardsConv}
\lim_{t \to \infty} d_\P(\P \Phi^{- t}_b v', \P \Phi^{- t}_b(v' + c v) ) = 0 
\end{align}
for any $c \in \R$. 

Assume for the sake of contradiction that $\P (v' + c v) \in \Ac$ for some $c \in \R$. Then, \eqref{eq:backwardsConv} implies that $\P \Phi^{- t}_b v' \in U$ for all $t$ sufficiently large, hence (using asymptotic compactness and arguing as above) $\P v' \in \Ac$. This is a contradiction, so that $\P (v' + c v) \notin \Ac$ for any $c \in \R$. We conclude that $\Ac \cap \P L = \{\P v\}$, as desired.

\medskip

To complete the proof of part (a), note that we have shown that $\P L \cap \Ac$ is either empty, consists of a single point, or is nonempty and has an empty boundary in $\P L$. In this last case, we obtain automatically that $\P L \cap \Ac = \P L$ by the connectedness of $\P L$. We conclude that $\P^{-1} \Ac \cap \Vc_b$ is a linear subspace for all $b \in B$, and since $\Ac$ is compact, $\P^{-1} \Ac \cap \Vc_b$ must be finite dimensional as well. 

Finally, to check item (b), form the plane $L$ spanned by $v, v'$ and note that $\P L \cap \Ac = \{\P v\}$ by part (a), hence $\P v$ is a boundary point of $\Ac \cap \P L$ and so \eqref{eq:weakDecayAttractor} follows from the first part of the above proof.
\end{proof}


\begin{lem}\label{lem:attractorCtsSubbundle}
Assume that $B$ is chain transitive. Then, $\Ec := \P^{-1} \Ac$ is a continuous subbundle of $\Vc$ of constant finite dimension.
\end{lem}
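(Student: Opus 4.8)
\textbf{Proof plan for Lemma \ref{lem:attractorCtsSubbundle}.}

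The plan is to combine Lemma \ref{lem:attractorSubspace} with Lemma \ref{lem:compactSubbundle}: the latter reduces the claim to showing that (i) the fiber dimension $d(b) := \dim \P^{-1}\Ac_b$ is constant in $b$, and (ii) the unit sphere $S_\Ec = \{(b,v) \in \Ec : |v| = 1\}$ is compact. By Lemma \ref{lem:attractorSubspace}(a) each fiber $\Ec_b = \P^{-1}\Ac \cap \Vc_b$ is already known to be a finite-dimensional subspace, so only (i) and (ii) remain. For (ii), I would argue as follows: since $(\Ac, U)$ is an asymptotically compact attractor pair, $\Ac$ is compact in $\P\Vc$ (Lemma \ref{lem:asympCompact}(a)), and $S_\Ec$ is the preimage of $\Ac$ under the continuous map $(b,v) \mapsto (b, \P v)$ restricted to the unit sphere bundle. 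Compactness of $S_\Ec$ is not automatic from this alone, but one can extract it: if $(b_n, v_n) \in S_\Ec$, then $b_n$ has a convergent subsequence $b_n \to b$ by compactness of $B$, and $(b_n, \P v_n) \to (b, \P v_\infty)$ for some $\P v_\infty \in \Ac_b$ by compactness of $\Ac$; lifting, one can choose unit representatives so that $v_n \to v_\infty$ in $\Bc$ (adjusting signs), which gives $(b, v_\infty) \in S_\Ec$. The one subtlety is that convergence in the projective metric $d_\P$ of unit vectors yields convergence of unit vectors in $|\cdot|$ up to sign, by the very definition \eqref{eq:projectiveMetric} of $d_\P$; so this step is routine.

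For (i), the constancy of the fiber dimension, this is where chain transitivity of $B$ enters and is the main obstacle. The strategy is standard for attractors of linear flows on projective bundles: first observe that $d(b)$ cannot increase along forward orbits — indeed $\Phi^t_b$ is injective, maps $\Ec_b$ into $\Ec_{\phi^t b}$ (by invariance of $\Ac$ under $\P\Phi$, Lemma \ref{lem:asympCompact}(c)), so $d(b) \le d(\phi^t b)$; but since $\Phi^t_b|_{\Ec_b}$ is an isomorphism onto $\Phi^t_b \Ec_b \subset \Ec_{\phi^t b}$ we in fact get $d(b) \le d(\phi^t b)$ for all $t \ge 0$, and applying this along a negative continuation (which exists for points of $\Ec$ by Lemma \ref{lem:asympCompact}(c)) also gives $d(\phi^t b) \le d(b)$. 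Hence $d$ is constant along every full orbit. Next I would show $d$ is lower semicontinuous: if $b_n \to b$, then taking a basis of $\Ec_b$ and using continuity of $b \mapsto \Phi^1_b$ in operator norm (H3) together with the fact that $\Ac$ is closed, one shows $\liminf_n d(b_n) \ge d(b)$. (The reverse inequality can fail, as noted before Lemma \ref{lem:attractorCtsSubbundle}, precisely because $\Fc$ need not yet be known continuous.) Actually, since $\Ec$ has compact unit sphere $S_\Ec$ by (ii), one gets upper semicontinuity of $d$ as well by the usual argument that a limit of $d(b_n)$ linearly independent unit vectors in $\Ec_{b_n}$, extracted via compactness of $S_\Ec$ and a "good basis" bound (Lemma \ref{lem:compExist}), remains linearly independent in $\Ec_b$; so $d$ is in fact continuous. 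A continuous, orbit-constant, integer-valued function on a chain transitive compact space is constant: given $b, b'$, take $\e$ smaller than the jumps of $d$ (which, being integer-valued and continuous, is locally constant with a uniform modulus on the compact $B$) and follow an $(\e, T)$-chain from $b$ to $b'$, noting $d$ is unchanged along each flow segment and unchanged across each $\e$-jump. This forces $d \equiv d(b)$, completing (i).

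With (i) and (ii) established, Lemma \ref{lem:compactSubbundle} applies directly to $\Cc = \Ec$ and yields that $\Ec$ is a continuous subbundle of $\Vc$ of constant finite dimension, which is the claim. The only genuinely delicate point is the interplay in (i) between orbit-invariance of the fiber dimension and the passage from orbit-constancy to global constancy via chain transitivity; everything else is bookkeeping with the compactness of $\Ac$, injectivity and continuity hypotheses (H2)--(H4), and the elementary Banach-space geometry recalled in \S \ref{sec:banachPrelim}.
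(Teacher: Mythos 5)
Your overall plan (constancy along orbits + continuity of $d(b) := \dim\Ec_b$ + chain transitivity) is genuinely different from the paper's, but the continuity step contains a real gap: the claimed \emph{lower} semicontinuity of $d$ is not justified by what you cite, and I do not see how to obtain it without essentially reproving the lemma. Closedness of $\Ac$ (together with continuity of $b \mapsto \Phi^t_b$) tells you that limits of points in $\Ac_{b_n}$ land in $\Ac_b$; this is exactly the content of your \emph{upper} semicontinuity argument, and it gives nothing in the other direction. Starting from a basis of $\Ec_b$, there is no a priori mechanism producing nearby vectors in $\Ec_{b_n}$: the fiber over $b$ could in principle be strictly larger than the (Hausdorff) limit of nearby fibers while $\Ac$ remains closed. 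Moreover, upper semicontinuity plus orbit-constancy plus chain transitivity is \emph{not} enough by itself: take the identity flow on $S^1$ (chain transitive) and let $d$ be any integer-valued u.s.c. orbit-constant function, e.g.\ $d \equiv 1$ except $d = 2$ on a closed arc; this is u.s.c. and orbit-constant yet nonconstant. So without genuine continuity your final ``Lebesgue number + follow a chain'' step does not close.

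The paper avoids the issue entirely by proving a stronger inequality directly: for \emph{any} pair $b, b' \in B$, $\dim\Ec_b \le \dim\Ec_{b'}$. It takes a basis of $\Ec_b$, lifts an $(\e, T)$-chain from $b$ to $b'$ to a chain for $\P\Phi$ (flowing the vectors forward over each leg and reinterpreting them over the new base point at each $\e$-jump), and uses the quantitative form of asymptotic compactness from Lemma \ref{lem:asympCompact2} to keep the lifted vectors in $B_{2\e}(\Ac)$ throughout; injectivity (H2) preserves linear independence of the lifted tuple, and a final limiting argument as $\e \to 0$, using ``good bases'' (Lemma \ref{lem:compExist}) and the estimates from Lemma \ref{lem:projectionContinuity}, produces $\dim\Ec_b$ linearly independent vectors inside $\Ec_{b'}$. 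Swapping $b$ and $b'$ gives equality. Note that this is strictly stronger than semicontinuity plus orbit-constancy — it compares arbitrary pairs of fibers, not just fibers on a common orbit or converging to one another — and that is exactly what chain transitivity buys you here. Your observations about compactness of $S_\Ec$ (via $\Ac$ compact and the two-to-one local isometry $S_\Bc \to \P\Bc$) and the resulting upper semicontinuity are correct and in the spirit of the paper's use of Lemmas \ref{lem:compactSubbundle} and \ref{lem:compExist}; the orbit-constancy observation is also correct but is not actually used in the paper's proof. If you want to keep your structure, you would need to \emph{prove} lower semicontinuity rather than assert it, and the only argument I can see for that is a special case of the paper's chain-lifting argument — at which point you may as well run it for general $b, b'$.
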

\begin{proof}
We first show that if $B$ is chain transitive, then $\Ec_b = \Ec \cap \Vc_b$ has constant dimension independent of $b \in B$. It then follows from Lemma \ref{lem:compactSubbundle} that $\Ec$ is a continuous subbundle.

\medskip

We will show that for any $b, b' \in B$, we have $\dim \Ec_b \leq \dim \Ec_{b'}$. To start, observe that $\Omega(\Ac_b) \subset \Ac$ (Lemma \ref{lem:chain1}), and so $\Omega(\Ac_b) \cap \P \Vc_{b'} \subset \Ac_{b'}$; thus it suffices to prove that $\P^{-1} \Omega(\Ac_b) \cap \Vc_{b'}$ contains a subspace of dimension $\dim \Ec_b$.

For this, let $\e > 0$, and assume $T > 0$ is sufficiently large so that $\overline{\Phi([T, \infty) \times U)} \subset B_\e(\Ac)$ as in Lemma \ref{lem:asympCompact2}. Let $b_1, \cdots, b_n$, be an $(\e, T)$-chain from $b = b_0$ to $b' = b_{n + 1}$ with times $T_0, \cdots, T_n \geq T$, i.e., $d_B(\phi^{T_i} b_i, b_{i + 1}) < \e$ for all $0 \leq i \leq n$. 

Let now $v^1, \cdots, v^d \subset \Ec_b$ be a basis of $\Ec_b$, $d := \dim \Ec_b$. For each $1 \leq j \leq d$, the chain $b, b_1, \cdots, b_n, b'$ lifts to an $(\e, T)$-chain $(b_1, \P v_1^j), \cdots, (b_n, \P v_n^j)$ taking $(b, \P v^j)$ to $(b', \P \hat v^j)$ by setting $v_{i + 1}^j = \Phi^{T_i} (b_i, v_i^j)$, $v^j_0 := v^j$ and $\hat v^j := v^j_{n + 1}$ for $0 \leq i \leq n, 1 \leq j \leq d$.

By our choice of $\e, T$, it follows that $d_{\P}(\P \hat v^j, \Ac_{b'}) < 2 \e$. Moreover, by the injectivity of $\Phi$ it follows that $\{\hat v^j\}$ is linearly independent. 

Collecting, we have shown that for any $\e > 0$ and $T = T(\e)$ sufficiently large, $\P^{-1} \big( \Omega(\Ac_b; \e, T) \big) \cap \Vc_{b'}$ contains a $d$-dimensional subspace $E_\e$, and that by construction, $\P E_\e \subset B_{2 \e}(\Ac_{b'})$. 

To complete the proof, fix a sequence $T_n \to \infty$ for which $T_n \geq T(1/n)$. For each $n$ let $E_{1/n} \subset B_{2 / n}(\Ac_{b'})$ denote the $d$-dimensional subspace constructed above, and let $\{w_n^1, \cdots, w_n^d\} \subset E_{1/n}$ be a basis of unit vectors for which $N[w_n^1, \cdots, w_n^d] \leq C_d$, where $C_d$ depends only on $d \in \N$ (Lemma \ref{lem:compExist}). For each $n$ and $1 \leq i \leq d$ there is a unit vector $\hat w^i_n \in \P^{-1} \Ac_{b'}$ for which $|\hat w_n^i - w_n^i| \leq 2/n$; thus, when $n$ is sufficiently large, it holds that $\{\hat w_n^1, \cdots, \hat w_n^d\} \subset \P^{-1}\Ac_{b'}$ are linearly independent-- this follows from the estimates in the proof of Lemma \ref{lem:projectionContinuity} (a) and the uniform estimate on $N[w_n^1, \cdots, w_n^d]$. Thus we have obtained $\dim \P^{-1} \Ac_{b'} \geq d$, as desired.
\end{proof}

\subsection{Dual repeller subspaces}\label{subsubsec:dualRepeller}

We now turn to the repeller $\Ac^*$ for $\Ac$. 

\medskip

\begin{lem}\label{lem:repellerSubspace}
Let $\Ac$ be the attractor of an asymptotically compact preattractor $U$, and let $\Ac^*$ be its dual repeller. Write $\Ac^*_b = \Ac^* \cap \P \Vc_b$.
\begin{itemize}
\item[(a)] For any $b \in B$, $\P^{-1}\Ac^*_b$ is a linear subspace of $\Vc_b$.
\item[(b)] For any $\P v \in \Ac^*_b,\P v' \notin \Ac^*_b$, we have
\begin{align}\label{eq:weakDecayRepeller}
\lim_{t \to \infty} \frac{|\Phi^t_b v|}{|\Phi^t_b v'|} = 0 \, .
\end{align}
\end{itemize}
\end{lem}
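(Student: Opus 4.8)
The plan is to mirror the structure of the proof of Lemma \ref{lem:attractorSubspace}, replacing the attractor $\Ac$ by its dual repeller $\Ac^*$ and time $-t$ by time $+t$. Recall from Lemma \ref{lem:dual} that $\Ac^*$ is the repeller associated to the prerepeller $V = X \setminus \overline{\Phi([T,\infty)\times U)}$, and from Lemma \ref{lem:repellersRepel} that we may equally use $V_\e = \{x : d(x, \Ac) > \e\}$ as a prerepeller for sufficiently small $\e$. The key structural fact we exploit is that $\Ac^*$ is forward invariant (Lemma \ref{lem:repellerInvariant}) and, by the $\omega^*$-characterization, its points are forward limits of preimages; in particular $\P v \in \Ac^*_b$ does not require a negative continuation, but the argument below for part (b) is run in \emph{forward} time, so no continuations are needed at all.

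I would begin with part (b) in the special case that $\P v$ is a relative boundary point of $\P L \cap \Ac^*$ inside $\P L$, where $L = \operatorname{Span}\{v, v'\} \subset \Vc_b$ is the two-dimensional plane. Assume for contradiction that $\limsup_{t\to\infty} |\Phi^t_b v| / |\Phi^t_b v'| > 0$, so along some sequence $t_n \to \infty$ we have $|\Phi^{t_n}_b v'| \le K |\Phi^{t_n}_b v|$. Then, exactly as in the estimate \eqref{eq:projectiveNormComp} (with $-t_n$ replaced by $t_n$), one gets
\[
d_\P\big(\P\Phi^{t_n}_b(v + c v'), \P\Phi^{t_n}_b v\big) \leq 2|c|\,\frac{|\Phi^{t_n}_b v'|}{|\Phi^{t_n}_b v|} \leq 2K|c|
\]
for all $n$. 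Since $\P\Phi^{t_n}_b v \in \Ac^*$ for all $n$ (forward invariance of $\Ac^*$) and $\Ac^*$ is closed, choosing $|c|$ small forces $\P\Phi^{t_n}_b(v + c v')$ to lie in any prescribed neighborhood of $\Ac^*$; taking $|c|$ small enough that these points stay at distance $> \e$ from $\Ac$, i.e.\ inside the prerepeller $V_\e$, and then using that $\Ac^* = \omega^*(V_\e)$ together with the fact that $\P\Phi^{t_n}_b(v+cv')$ has preimage $\P(v+cv')$, one concludes $\P(v + c v') \in \Ac^*$ for all small $|c|$ — contradicting that $\P v$ is a relative boundary point of $\P L \cap \Ac^*$. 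The subtlety here, compared to the attractor case, is that we must track preimages: $\P(v+cv')$ is the time-$t_n$ preimage of $\P\Phi^{t_n}_b(v+cv')$, which places it in $\omega^*(V_\e) = \Ac^*$ directly. This is the step I expect to require the most care.

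Next, still following Lemma \ref{lem:attractorSubspace}, I would upgrade this to show $\P L \cap \Ac^*$ is a single point whenever it contains such a relative boundary point: given $\P v' \in \P L \setminus \Ac^*$, every element of $\P L \setminus \Ac^*$ has the form $\P(v' + cv)$, and from \eqref{eq:weakDecayRepeller} (in the boundary-point case just established) together with a computation like \eqref{eq:projectiveNormComp} one obtains $\lim_{t\to\infty} d_\P(\P\Phi^t_b v', \P\Phi^t_b(v'+cv)) = 0$; if $\P(v'+cv) \in \Ac^*$, then by forward invariance and closedness $\P\Phi^t_b v'$ approaches $\Ac^*$, and a $V_\e$-preimage argument gives $\P v' \in \Ac^*$, a contradiction. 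Hence $\P L \cap \Ac^*$ is empty, a single point, or has empty relative boundary in $\P L$; in the last case connectedness of $\P L$ forces $\P L \cap \Ac^* = \P L$. This yields part (a): $\P^{-1}\Ac^*_b \cap \Vc_b$ is a linear subspace for every $b$. Finally, part (b) in general follows by forming $L = \operatorname{Span}\{v,v'\}$ and noting that by part (a) the point $\P v$ is automatically a relative boundary point of $\P L \cap \Ac^*$ (since $\P v' \notin \Ac^*$ shows $\P L \cap \Ac^* \neq \P L$), so the boundary-point case applies.
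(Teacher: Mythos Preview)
Your proof is correct and follows the same two-dimensional plane / relative-boundary-point scaffolding as the paper's argument, but the mechanism you use to close the contradiction in the boundary-point case is genuinely different from the paper's. You work on the \emph{repeller} side: you use forward invariance of $\Ac^*$ to keep $\P\Phi^{t_n}_b v$ inside $\Ac^* \subset V_\e$, conclude that $\P\Phi^{t_n}_b(v+cv') \in V_\e$ for all small $|c|$, and then invoke the $\omega^*$-characterization $\Ac^* = \omega^*(V_\e)$ from Lemma~\ref{lem:repellersRepel} to pull $\P(v+cv')$ back into $\Ac^*$, contradicting the boundary assumption. The paper instead works on the \emph{attractor} side: it uses the boundary hypothesis to pick a specific small $c$ with $\P(v+cv') \notin \Ac^*$, so that $\omega(\P(v+cv')) \cap \Ac \neq \emptyset$; asymptotic compactness then forces $\P\Phi^{t_{n_j}}_b(v+cv')$ into $B_{\e/2}(\Ac)$ along a subsequence, and the $2K|c|$ estimate drags $\P\Phi^{t_{n_j}}_b v$ into $B_\e(\Ac) \subset U$, whence asymptotic compactness gives $\omega(\P v)\cap\Ac\neq\emptyset$, contradicting $\P v\in\Ac^*$. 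The same dichotomy occurs in the second step: you again use a $V_\e$/$\omega^*$ argument, while the paper simply observes $\omega(\P v') = \omega(\P(v'+cv))$ from the projective convergence, so $\P(v'+cv)\in\Ac^*$ would force $\P v'\in\Ac^*$. Your route leans on Lemmas~\ref{lem:repellerInvariant} and~\ref{lem:repellersRepel} and the positive distance $d(\Ac,\Ac^*)>0$; the paper's route leans only on the preattractor $U$ and asymptotic compactness. Both are clean; the paper's is marginally more self-contained since it never needs the $\omega^*$ description of $\Ac^*$.
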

\begin{proof}
This proof follows that of Lemma \ref{lem:attractorSubspace}; indeed, it is somewhat simpler, since we need not concern ourselves with the existence of negative continuations. 

To begin, let $\P v \in \Ac^*_b, \P v' \in \Vc_b \setminus \Ac^*_b$, and form the two-dimensional subspace $L \subset \Vc_b$ spanned by $v, v'$. Assuming $\P v$ is a boundary point of $\Ac^* \cap \P L$ relative to $\P L$, we will show that \eqref{eq:weakDecayRepeller} holds. 

If it does not, then as before there is a sequence of positive reals $t_n \to \infty$ and a constant $K > 0$ such that
\begin{align}\label{eq:contra1}
|\Phi^{t_n}_b v'| \leq K |\Phi^{t_n}_b v| 
\end{align}
for all $n$. Following the time-reversed analogue of the computation in \eqref{eq:projectiveNormComp}, we conclude that
\begin{align}\label{eq:vpcvCloseToAttractor}
d_\P( \P \Phi^{t_n}_b (v + c v'), \P \Phi^{t_n}_b v) \leq 2 K |c|
\end{align}
for arbitrary $c \in \R$. From here on, fix $\e > 0$ so that $B_\e(\Ac) \subset U$; we assume in what follows that $|c| \ll \e / 2 K$, so that $d_\P( \P \Phi^{t_n}_b (v + c v'), \P \Phi^{t_n}_b v) < \e/2$ for all $n$.

Recalling that $\P v \in \Ac^* \cap \P L$ is a boundary point, there is some $c \in [-\e / 2K, \e / 2 K] \setminus \{0\}$ such that $v + c v' \notin \Ac^*$. Fixing such a $c$, by definition $\omega(v + c v') \cap \Ac \neq \emptyset$ and so there is a sequence $t_n' \to \infty$ for which $\{\P \Phi^{t_n'}_b(v + c v')\}$ converges to a point of $\Ac$; by the definition of preattractor we conclude that there exists $T > 0$ such that for any $t \geq T$, $\P \Phi^t_b(v + c v') \in U$. By asymptotic compactness it follows that a subsequence $\{\P \Phi^{t_{n_j}}_b (v + c v')\}$ converges to a point in $\Ac$. 

In particular, $\P \Phi^{t_{n_j}}_b(v + c v') \in B_{\e/2}(\Ac)$, hence by \eqref{eq:vpcvCloseToAttractor} we have $\P \Phi^{t_{n_j}}_b(v) \in B_\e(\Ac) \subset U$ for $j$ sufficiently large. But now, $\{\P \Phi^{t_{n_j}}_b(v)\}$ possesses a subsequence converging to a point of $\Ac$ by asymptotic compactness, which contradicts the assumption that $v \in \Ac^*$. Thus \eqref{eq:weakDecayRepeller} holds in the case when $\P v$ is a boundary point of $\P L \cap \Ac^*$.

\medskip

Next, we show that if $\Ac^* \cap \P L$ contains a boundary point $\P v$ as above, then $\Ac^* \cap \P L = \{\P v\}$ consists of a single point. For this, fix such a boundary point $\P v$ and let $\P v' \in \Vc_b \setminus \Ac^*$. Applying \eqref{eq:weakDecayRepeller} to this choice of $\P v, \P v'$, we deduce that
\[
\lim_{t \to \infty} d_\P(\P \Phi^{t}_b v', \P \Phi^{t}_b(v' + c v) ) = 0 
\]
for all $c \in \R$, following the computation \eqref{eq:projectiveNormComp} in Lemma \ref{lem:attractorSubspace}. Since $\omega(\P v') = \omega(\P (v' + c v))$, we conclude that $v' + c v \notin \Ac^*$ for any $c \in \R$; in particular $\Ac^* \cap \P L = \{\P v\}$, as desired.

\medskip

To complete the proof of (a), note that for any two-dimensional subspace $L \subset \Vc_b$ that $\P L \cap \Ac^*$ is either empty, a single point, or all of $\P L$-- this implies that $\Ac^* \cap \Vc_b$ is a subspace for any $b \in B$, which is a closed subspace by the fact that $\Ac^* \subset \P \Vc$ is closed. Part (b) follows for any $\P v, \P v' \in \Vc_b$ with $\P v \in \Ac^*, \P v' \notin \Ac^*$ by considering the two-dimensional subspace $L \subset \Vc_b$ spanned by $v, v'$.
\end{proof}

We now deduce that the dual repeller to $\Ac$ is a complementary subbundle of codimension equal to the dimension of $\Ac$. { Here we significantly deviate from the finite-dimensional proof, as we must carefully argue around the fact that $\P \Vc$ is not locally compact.}

\begin{lem}\label{lem:repellerComplements}
We have that $\Fc = \P^{-1} \Ac^*$, where for each $b \in B$ we have that $\Fc_b = \Fc \cap \Vc_b$ is a complement to $\Ec_b$ for which $|\pi_{\Ec_b \ds \Fc_b}| \leq C$, where $C > 0$ is independent of $b \in B$.
\end{lem}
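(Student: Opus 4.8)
The plan is to show three things: (i) $\Ec_b \cap \Fc_b = \{0\}$ for every $b$; (ii) $\Ec_b + \Fc_b = \Vc_b$ for every $b$; and (iii) the projection norms $|\pi_{\Ec_b \ds \Fc_b}|$ are bounded uniformly in $b$. Item (i) is immediate from $\Ac \cap \Ac^* = \emptyset$ (Lemma \ref{lem:dual}): if $0 \neq v \in \Ec_b \cap \Fc_b$ then $\P v \in \Ac_b \cap \Ac^*_b$, a contradiction. For (ii), fix $b$ and set $d = \dim \Ec_b$ (constant in $b$ by Lemma \ref{lem:attractorCtsSubbundle}); I must produce, for each $v \in \Vc_b$, a decomposition $v = e + f$ with $e \in \Ec_b$, $f \in \Fc_b$. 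The natural route is to use the decay estimates: since $\Ec_b$ is finite-dimensional, pick any complement $G_b \in \Gc^d(\Bc)$ of $\Ec_b$ in $\Vc_b$ with $|\pi_{\Ec_b \ds G_b}|$ controlled (Lemma \ref{lem:compExist}); write $v = e_0 + g_0$ in this splitting. If $g_0 = 0$ we are done, so assume $g_0 \neq 0$. If $\P g_0 \in \Ac^*_b$ we are again done with $f = g_0$. Otherwise I want to ``correct'' $g_0$ by an element of $\Ec_b$ to land in $\Fc_b$; the obstacle is that $\Ac^*_b$ may be a proper subspace of $G_b$, so I cannot simply project.

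The key idea, following the finite-dimensional arguments of \cite{salamon1988flows, colonius2012dynamics} but adapted around the lack of local compactness, is to run the splitting construction fiberwise using backward/forward iterates and the decay estimates \eqref{eq:weakDecayAttractor}, \eqref{eq:weakDecayRepeller}. Concretely: by Lemma \ref{lem:attractorSubspace}(a) and Lemma \ref{lem:repellerSubspace}(a), $\Ec_b$ and $\Fc_b$ are both linear subspaces of $\Vc_b$, so it remains only to see $\dim \Ec_b + \codim_{\Vc_b} \Fc_b$ accounts for everything, i.e. $\codim_{\Vc_b} \Fc_b = d$. To get ``$\leq d$'': I claim any $d+1$ vectors $v_0, \dots, v_d \in \Vc_b$ must have a nonzero linear combination lying in $\Fc_b$. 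Consider the projectivized $(d+1)$-plane they span (or less); by Lemma \ref{lem:attractorSubspace}(a), $\Ec_b$ meets this span in at most a $d$-dimensional piece, and any vector $w$ in the span with $\P w \notin \Ac$ has, by the decay estimate, backward iterates that asymptotically align with $\Fc$-directions — more precisely, for $w \notin \Ec_b$ one shows $\P\Phi^{-t}_b w$ eventually leaves every neighborhood of $\Ac$, hence converges (along subsequences, using asymptotic compactness of the repeller pair $(\Ac^*, V_\e)$ from Lemma \ref{lem:repellersRepel}) into $\Ac^*$; this forces the existence of $f \in \Fc_b$ in the span. Iterating/counting dimensions gives $\codim \Fc_b \leq d$, and combined with (i) (which gives $\codim \Fc_b \geq \dim \Ec_b = d$ since $\Ec_b \cap \Fc_b = 0$ forces $\Ec_b$ to inject into $\Vc_b / \Fc_b$) we conclude $\Vc_b = \Ec_b \oplus \Fc_b$.

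For the uniform bound (iii): once complementarity holds at every $b$, $b \mapsto \pi_{\Ec_b \ds \Fc_b}$ is defined on all of $B$. Since $\Ec$ is a continuous subbundle (Lemma \ref{lem:attractorCtsSubbundle}) and $\Ec_b \in \Gc_d(\Bc)$, Lemma \ref{lem:compExist} supplies at each $b$ a complement $G_b$ with $|\pi_{\Ec_b \ds G_b}| \leq \sqrt d$; the issue is relating $\pi_{\Ec_b \ds \Fc_b}$ to $\pi_{\Ec_b \ds G_b}$ without yet knowing $\Fc$ is continuous. I would instead argue by contradiction: if $\sup_b |\pi_{\Ec_b \ds \Fc_b}| = \infty$, take $b_n$ with $|\pi_{\Ec_{b_n} \ds \Fc_{b_n}}| \to \infty$, i.e. $\sin\theta(\Ec_{b_n}, \Fc_{b_n}) \to 0$ by \eqref{eq:minAngle}; pass to $b_n \to b_*$ (compactness of $B$), and extract unit vectors $e_n \in \Ec_{b_n}$ with $d(e_n, \Fc_{b_n}) \to 0$, i.e. $f_n \in \Fc_{b_n}$ with $|e_n - f_n| \to 0$. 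Using continuity of $\Ec$, $e_n \to e_* \in \Ec_{b_*}$ along a subsequence (finite-dimensional unit sphere is compact), so also $f_n \to e_*$; since $\Ac^*$ is closed in $\P\Vc$ and $\P f_n \in \Ac^*_{b_n}$ with $(b_n, \P f_n) \to (b_*, \P e_*)$, we get $\P e_* \in \Ac^*_{b_*}$, so $e_* \in \Ec_{b_*} \cap \Fc_{b_*} = \{0\}$, contradicting $|e_*| = 1$. I expect step (ii) — establishing fiberwise complementarity, specifically the $\codim \Fc_b \leq d$ bound — to be the main obstacle, since it is exactly where the finite-dimensional compactness-of-the-sphere argument breaks down and must be replaced by a careful use of asymptotic compactness of the repeller pair together with the decay estimate \eqref{eq:weakDecayRepeller}.
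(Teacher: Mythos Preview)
Your steps (i) and (iii) are fine; (iii) is in fact a clean alternative to the paper's approach, which obtains the uniform projection bound as a byproduct of its explicit construction of $\Fc_b$. The real difficulty, as you correctly anticipate, is (ii), and your sketch there does not go through.

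There are two fatal obstacles. First, $\Phi$ is only a \emph{semiflow}: for a general $w \in \Vc_b \setminus \Ec_b$ the backward orbit $\Phi^{-t}_b w$ need not exist at all. Only vectors in $\Ec_b$ are guaranteed a negative continuation (since $\Phi^t_{\phi^{-t} b}|_{\Ec_{\phi^{-t} b}}$ is an isomorphism onto $\Ec_b$), so the phrase ``$\P \Phi^{-t}_b w$ eventually leaves every neighborhood of $\Ac$'' is not even well-posed for the vectors you need. Second, even granting backward orbits, your appeal to ``asymptotic compactness of the repeller pair $(\Ac^*, V_\e)$'' is unfounded: Lemma \ref{lem:repellersRepel} shows $(\Ac^*, V_\e)$ is a repeller pair, but no backward asymptotic compactness is claimed or available --- indeed $\Ac^* = \P \Fc$ is typically infinite-dimensional, hence not compact, so no subsequential convergence of backward iterates can be extracted. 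This is precisely the place where the finite-dimensional proof collapses (as the paper flags just before the lemma), and the dimension-counting you propose does not survive the loss of local compactness.

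The paper replaces this with a \emph{forward}-time construction plus weak-$*$ compactness. For each $n$, choose an arbitrary complement $F_n'$ to $\Ec_{\phi^n b}$ with $|\pi_{\Ec_{\phi^n b} \ds F_n'}| \leq \sqrt d + 2$ (Lemma \ref{lem:compExist}); then $F_n := (\Phi^n_b)^{-1} F_n'$ is a complement to $\Ec_b$ whose projectivization lies in $V_\e$ for all large $n$ (Lemma \ref{lem:repellersRepel}), giving a uniform bound on $|\pi_{\Ec_b \ds F_n}|$. Writing each $F_n$ as the graph of a map $G_n : F \to \Ec_b \cong \R^d$ over a fixed complement $F$, the $G_n$ are uniformly bounded, and Banach--Alaoglu applied coordinatewise (this is where separability of $\Bc$ is used) yields a subsequence $G_{n_i} \to G$ in the strong operator topology. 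One then checks $\graph G = \P^{-1}\Ac^*_b$ directly: $\graph G \subset \Fc_b$ via the $\omega^*$-characterization of $\Ac^*$ in Lemma \ref{lem:repellersRepel}, and the reverse inclusion uses the forward decay \eqref{eq:weakDecayRepeller}. The uniform bound on $|G|$ then gives (iii) for free.
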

\begin{proof}
Fix $b \in B$: we will show that $\P^{-1} \Ac^*_b$ is a closed, finite codimensional complement to $\Ec_b$. To start, using Lemma \ref{lem:compExist} fix for each $n$ a complement $F_n'$ to $\Ec_{\phi^n b}$ for which $|\pi_{\Ec_{\phi^n b}\ds F_n'}| \leq \sqrt{\dim \Ec} + 2$. Then, by Lemma \ref{lem:projectiveEstimate}, there is some $\e > 0$, depending only on $\dim \Ec$, for which $F_n' \subset V_\e$ for all $n$. Fix such an $\e$.


One now checks that for all $n \geq 1, b \in B$, the preimage $F_n  := (\Phi^{n}_{ b})^{-1} F_n'$ is a subspace complementary to $\Ec_b$. This is straightforward: the bounded projection operator $\pi_n := (\Phi^n_{ b}|_{\Ec_{ b}})^{-1} \circ \pi_{\Ec_{\phi^n b} \ds F_n'} \circ \Phi^n_{ b}$ has image $\Phi^n_{ b} \Ec_{ b} = \Ec_{ \phi^n b}$ and kernel $F_n = (\Phi^n_{ b})^{-1} F_n'  = \{f \in \Vc_b : \Phi^n_b f \in F_n'\}$ (for more details, see Lemma 2.4 in \cite{Blumenthal20162377}).

Since $\P F_n' \subset V_\e$ for all $n$, it follows from Lemma \ref{lem:repellersRepel} that $\P F_n \subset V_\e$ for all $n \geq T = T(\e)$. In particular, for all $b \in B$ and $n \geq T(\e)$ we have that $|\pi_n|$ is bounded from above by a constant $C = C(\e) > 0$ by Lemma \ref{lem:projectiveEstimate} and \eqref{eq:minAngle}.

Fixing a complement $F$ to $\Ec_b$ in $\Vc_b$, define
\[
G_n = \pi_n|_{F} \, ,
\]
so that $\graph G_n = \{f + G_n(f) : f \in F\} = F_n$ for all $n$.

Observe that $|G_n| \leq C$ for all $n$. We now appeal to the following Lemma.

\begin{lem}\label{lem:arzela}
Let $V$ be a separable Banach space. Let $d \in \N$, and let $\{G_n\} \subset L(V, \R^d)$ be an infinite collection of bounded linear maps for which $|G_n| \leq C$ for all $n$, where $C > 0$ is a constant. Then, there is a subsequence $\{n_i\}$ along which $\{G_{n_i}\}$ converges in the strong operator topology on $L(V, \R^d)$ to some $G \in L(V, \R^d)$-- that is, for any fixed $v \in V$, we have that $G_{n_i} v \to G v$.
\end{lem}
\begin{proof}
By the Banach-Alaoglu Theorem, the unit ball of $\Bc^*$ is compact in the weak$^*$ topology. Since $\Bc^*$ is metrizable when $\Bc$ is separable, it follows that for any sequence of unit vectors $\{ l_n \} \subset \Bc^*$ there is a weak$^*$ convergent subsequence $\{ l_{n'}\}$. One then applies this argument to each of the $d$ coordinate functionals comprising $G_n : V \to \R^d$, obtaining a subsequence $G_{n_i}$ which converges in the strong operator topology.
\end{proof}

Regarding $\{G_n\}$ as a sequence of linear operators $F \to  \Ec_b \cong \R^{\dim \Ec}$, we have satisfied the setup of Lemma \ref{lem:arzela}. Thus there is a sequence $n_i \to \infty$ and a bounded linear operator $G : F \to \Ec_b$ such that  $G(f) = \lim_i G_{n_i}(f)$ for all $f \in F$. 

We claim that $\graph G = \P^{-1} \Ac^*_b$. To show `$\subset$', fix $f \in \hat F \setminus \{0\}$ and write $v_n = f + G_n(f)$, so that $v_n \to v \in \graph G$ where $v = f + G f$. Since $v_n \in F_n$, by construction $\Phi^n_b v_n \in F_n'$ for all $n$, and so $\P \Phi^n_b v_n \in V_\e$. Thus $v_n \in (\Phi^n_b)^{-1} V_\e$, and so
\[
\P v \in \bigcap_{t \geq 0} \overline{\bigcup_{s \geq t} (\P \Phi^s)^{-1} V_\e} \, ,
\]
hence $\P v \in  \Ac^*$ by Lemma \ref{lem:repellersRepel}.

For the opposite inclusion, let $v \in \Vc_b \setminus \graph G$ and observe that $\graph G$ complements $\Ec_b$ in $\Vc_b$, hence $v = e + f$ for some $e \in \Ec_b, f \in \graph G \subset \Ac^*_b$. Since $v \notin \graph G$, we have $e \neq 0$. Thus $d_{\P} (\P \Phi^t_b v, \P \Phi^t_b e) \to 0$ as $t \to \infty$ by Lemma \ref{lem:repellerSubspace}, which implies that $\omega(\P v) \cap \Ac \neq \emptyset$ by asymptotic compactness. Thus $\P v \notin \Ac^*_b$. As $v \in \Vc_b \setminus \graph G$ was arbitrary, we conclude that $\P^{-1} \Ac^*_b \subset \graph G$.

%
\end{proof}

\subsection{Deducing exponential separation}\label{subsubsec:exponentialSep}

We now show that $\Ec_b, \Fc_b$ are exponentially separated with uniform constants.

To begin, we show the following.

\begin{lem}\label{lem:oneHalf}
There exists $T > 0$ such that for any $b \in B$ and any unit vectors $e \in \Ec_b$, $f \in \Fc_b$, we have that
\[
|\Phi^T_b f| \leq \frac12 |\Phi^T_b e| \, .
\]
\end{lem}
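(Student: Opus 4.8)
The plan is to argue by contradiction and compactness, exploiting the fact that $B$ is compact and that the subbundles $\Ec,\Fc$ are ``close to invariant'' under the projectivized flow. Suppose the conclusion fails for every $T>0$. Then for each $n\in\N$ there is a base point $b_n\in B$ and unit vectors $e_n\in\Ec_{b_n}$, $f_n\in\Fc_{b_n}$ with $|\Phi^n_{b_n} f_n| > \tfrac12 |\Phi^n_{b_n} e_n|$. Consider the projective points $\P e_n,\P f_n\in\P\Vc_{b_n}$. By Lemma~\ref{lem:repellerComplements} we have a uniform bound $\sup_b|\pi_{\Ec_b\ds\Fc_b}|\le C$, hence a uniform lower bound on the angle $\theta(\Ec_b,\Fc_b)$, and by Lemma~\ref{lem:projectiveEstimate} together with Lemma~\ref{lem:projectiveEstimate} applied to $\P f_n\notin\P\Ec_{b_n}$, the projective distance $d_\P(\P f_n,\P\Ec_{b_n})$ is bounded below by a uniform constant $\delta_0>0$; similarly $d_\P(\P e_n,\P\Fc_{b_n})$ and $d_\P(\P e_n, \Ac^*_{b_n})$ are uniformly bounded below.

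The key idea is that since $\Ac=\P\Ec$ is an asymptotically compact attractor with corresponding preattractor $U$, trajectories starting in $U$ eventually enter any neighborhood of $\Ac$ uniformly (Lemma~\ref{lem:asympCompact2}), while by the dual-repeller estimate of Lemma~\ref{lem:repellerSubspace} (or rather its uniform consequence, which I would extract) points $\P f_n\in\Ac^*$ are repelled from $\Ac$ in forward time. More precisely, I would first establish a \emph{uniform} version of Lemma~\ref{lem:attractorSubspace}(b) and Lemma~\ref{lem:repellerSubspace}(b): there is a function $\rho(t)\to 0$ as $t\to\infty$ such that for all $b\in B$ and all unit vectors $e\in\Ec_b$, $f\in\Fc_b$, one has $|\Phi^t_b e|/|\Phi^t_b f|\le\rho(t)$. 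This uniformity follows from compactness of $B$ and continuity of $b\mapsto\Phi^t_b$ in the operator norm (H3): if it failed, one would produce sequences $b_n\to b_*$, times $t_n\to\infty$ with the ratio bounded below, and then (passing to subsequences, using compactness of the unit spheres of the finite-dimensional bundles $\Ec$ and of the attractor $\Ac$, plus asymptotic compactness of $(\Ac,U)$) deduce a contradiction exactly as in the proofs of Lemmas~\ref{lem:attractorSubspace} and~\ref{lem:repellerSubspace}; the point is that a small perturbation $\P(e + c f)$ of $\P e\in\Ac$ still lies in $U$ by the uniform angle bound, and its forward orbit must accumulate on $\Ac$, forcing $\P(e+cf)\in\Ac$, which contradicts the uniform lower bound $d_\P(\P f,\Ac^*_b)>\delta_0$.

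Granting this uniform decay, the lemma is immediate: choose $T$ so large that $\rho(T)\le\tfrac12$; then for any $b\in B$ and unit vectors $e\in\Ec_b$, $f\in\Fc_b$ we get $|\Phi^T_b f|\ge \rho(T)^{-1}|\Phi^T_b e|\ge 2|\Phi^T_b e|$—wait, more directly $|\Phi^T_b e| \le \rho(T)|\Phi^T_b f| \le \tfrac12 |\Phi^T_b f|$, which is the reverse of what is claimed, so I should be careful: the statement of Lemma~\ref{lem:oneHalf} asserts $\Ec$ \emph{dominates} $\Fc$, i.e. vectors in $\Ec$ grow faster, so the uniform ratio I want is $|\Phi^t_b f|/|\Phi^t_b e|\le\rho(t)\to0$, matching \eqref{eq:weakDecayAttractor} read in forward time for the attractor direction. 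With that orientation, picking $T$ with $\rho(T)\le\tfrac12$ gives exactly $|\Phi^T_b f|\le\tfrac12|\Phi^T_b e|$ for all unit $e\in\Ec_b$, $f\in\Fc_b$.

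The main obstacle I anticipate is establishing the \emph{uniformity} over $b\in B$ of the decay rate—the pointwise statements in Lemmas~\ref{lem:attractorSubspace} and~\ref{lem:repellerSubspace} give, for each fixed $b$, that $|\Phi^t_b f|/|\Phi^t_b e|\to0$, but with no control of the rate across base points. Upgrading to a uniform rate requires a compactness argument that simultaneously handles: (i) the noncompactness of $\P\Bc$ (handled because $\Ec$ is finite-dimensional, so its unit sphere bundle over the compact $B$ is compact—Lemma~\ref{lem:compactSubbundle}); (ii) the fact that $\Fc$ is infinite-dimensional, so $\P f_n$ need not have a convergent subsequence—this is precisely where asymptotic compactness of $(\Ac,U)$ enters, since it is the forward images $\P\Phi^{t_n}_{b_n}(\cdot)$ that must subconverge, not the $\P f_n$ themselves; and (iii) interchanging the limit $b_n\to b_*$ with $t_n\to\infty$, which uses (H3) and (H4). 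I would organize this as a single standalone lemma (uniform forward contraction of $\Fc$ relative to $\Ec$) proved by contradiction, from which Lemma~\ref{lem:oneHalf} is a one-line corollary.
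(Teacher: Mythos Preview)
Your reduction to a uniform decay rate $\rho(t)\to 0$ is the right shape, but the contradiction you sketch for uniformity does not work as written. You claim that if the ratio stayed bounded below one could form $\P(e+cf)$ near $\Ac$, observe its forward orbit accumulates on $\Ac$, and thereby ``force $\P(e+cf)\in\Ac$.'' This is false: for $c\neq 0$ the point $\P(e+cf)$ is never in $\Ac$ (since $\Ec_b\cap\Fc_b=\{0\}$), and having its forward orbit converge to $\Ac$ is exactly what the attractor property says about \emph{every} point of $U$---it is not a contradiction. If you want a contradiction it must run the opposite way: assuming $|\Phi^{t_n}_{b_n}f_n|>\tfrac12|\Phi^{t_n}_{b_n}e_n|$, the point $v_n=e_n+\alpha f_n$ (for $\alpha$ small, fixed) lies in $B_\e(\Ac)$, while the $\Fc$-component of $\Phi^{t_n}_{b_n}v_n$ remains a definite fraction of its $\Ec$-component, so $\P\Phi^{t_n}_{b_n}v_n$ stays \emph{bounded away} from $\Ac$; this contradicts Lemma~\ref{lem:asympCompact2}.

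Once you see this, the contradiction is superfluous, and the paper's argument is direct. The uniformity over $b\in B$ that you worry about in (i)--(iii) is already packaged in Lemma~\ref{lem:asympCompact2}: for any $\delta>0$ there is a single $T$ with $\P\Phi([T,\infty)\times B_\e(\Ac))\subset B_{\delta}(\Ac)$. Given unit vectors $e\in\Ec_b,\,f\in\Fc_b$, form $v=e+\alpha f$ with $\alpha$ small enough (via Lemma~\ref{lem:projectiveEstimate}) that $\P v\in B_\e(\Ac)$; then $\P\Phi^T_b v\in B_\delta(\Ac)$, and Lemma~\ref{lem:projectiveEstimate} together with the uniform bound $C^*=\sup_b|\pi_{\Fc_b\ds\Ec_b}|$ from Lemma~\ref{lem:repellerComplements} converts this back into $|\Phi^T_b f|\le\tfrac12|\Phi^T_b e|$ for $\delta$ chosen suitably small. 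No subsequence extraction on the infinite-dimensional $\Fc$-side, no appeal to (H3), and no separate ``uniform Lemma~\ref{lem:repellerSubspace}(b)'' is needed.
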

\begin{proof}
Let $e \in \Ec_b, f \in \Fc_b$ be any two unit vectors. Using compactness of $\Ac$, let $\e > 0$ be such that $B_\e(\Ac) \subset U$. Assume without loss that $\e \leq 1/3$. In particular, note by Lemma \ref{lem:asympCompact2} that there exists $T$ such that $\Phi([T, \infty) \times B_\e(A)) \subset B_{\e / C^*}(\Ac)$ for this choice of $\e$. Here we take $C^* = \sup_{b \in B} |\pi_{\Fc_b \ds \Ec_b}|$, which by Lemma \ref{lem:repellerComplements} is finite. This will be the value of $T$ as in the statement of Lemma \ref{lem:oneHalf}.

Form $v = e + \a f$, where $\a > 0$ is chosen so that $\P v \in B_\e(\Ec_b)$. For this it suffices, by Lemma \ref{lem:projectiveEstimate}, to take $\a$ so that
\[
\frac{2 \a}{1 - \a} \leq \e \, .
\]
Now, set $e_T = \Phi^T_b e, f_T = \Phi^T_b f$. By construction, $v_T = \Phi^T_b v$ is such that $\P v_T \in B_{\e / C^*}(\Ec_{\phi^T b})$, hence
\[
\frac{|f_T|}{|v_T|} = \frac{|\pi_{\Fc_{\phi^T b} \ds \Ec_{\phi^T b}} v_T|}{|v_T|} \leq |\pi_{\Fc_{\phi^T b} \ds \Ec_{\phi^T b}}| \cdot d_\P(\P v_T, \P \Ec_{\phi^T b}) \leq \e
\]
by Lemma \ref{lem:projectiveEstimate}. Rearranging and applying the triangle inequality (i.e., $|v_T| \leq |e_T| + |f_T|$), we obtain
\[
|f_T| \leq \frac{\e}{1 - \e} |e_T|  \leq \frac12 |e_T|
\]
by our stipulation that $\e \leq 1/3$.
\end{proof}

\begin{lem}
There are constants $K > 0, \gamma > 0$ such that for any $b \in B, t \geq 0$, 
\begin{align}\label{eq:separateNorms}
|\Phi^t_b|_{\Fc_b}| \leq K e^{- \gamma t} m(\Phi^t_b|_{\Ec_b})  \, .
\end{align}
\end{lem}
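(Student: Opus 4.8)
The strategy is to bootstrap the single-time-step estimate of Lemma~\ref{lem:oneHalf} into a genuine exponential separation using a cocycle (submultiplicativity) argument. Let $T > 0$ be as in Lemma~\ref{lem:oneHalf}, so that $|\Phi^T_b f| \leq \tfrac12 |\Phi^T_b e|$ for all unit vectors $e \in \Ec_b$, $f \in \Fc_b$ and all $b \in B$. First I would rephrase this in operator-norm language: for all $b \in B$,
\[
|\Phi^T_b|_{\Fc_b}| \leq \tfrac12 \, m(\Phi^T_b|_{\Ec_b}) \, .
\]
The key observation is that the exponentially separated structure is \emph{preserved under composition}, i.e.\ the family of subbundles $\{\Ec_b\}$ and $\{\Fc_b\}$ is forward invariant ($\Phi^T_b \Ec_b = \Ec_{\phi^T b}$ and $\Phi^T_b \Fc_b \subseteq \Fc_{\phi^T b}$), so iterating the time-$T$ map $n$ times and using submultiplicativity of the operator norm on $\Fc$ together with supermultiplicativity of the minimum norm on $\Ec$ (valid since $\Phi^T_b|_{\Ec_b}$ is invertible onto $\Ec_{\phi^T b}$) yields
\[
|\Phi^{nT}_b|_{\Fc_b}| \leq \prod_{j=0}^{n-1} |\Phi^T_{\phi^{jT}b}|_{\Fc_{\phi^{jT}b}}| \leq 2^{-n} \prod_{j=0}^{n-1} m(\Phi^T_{\phi^{jT}b}|_{\Ec_{\phi^{jT}b}}) \leq 2^{-n} \, m(\Phi^{nT}_b|_{\Ec_b}) \, .
\]
This gives \eqref{eq:separateNorms} at times that are integer multiples of $T$, with $\gamma = (\log 2)/T$ and $K$ absorbing the error.

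Second, I would interpolate to all $t \geq 0$. Write $t = nT + s$ with $n \in \Z_{\geq 0}$ and $s \in [0,T)$. For the $\Fc$-part, decompose $\Phi^t_b|_{\Fc_b} = \Phi^s_{\phi^{nT}b}|_{\Fc_{\phi^{nT}b}} \circ \Phi^{nT}_b|_{\Fc_b}$ and bound $|\Phi^s_{\phi^{nT}b}|$ by $M := \sup_{b \in B, \, 0 \leq s \leq T} |\Phi^s_b|$, which is finite by (H3)--(H4) and compactness of $B$ together with uniform boundedness. For the $\Ec$-part one needs a \emph{lower} bound on $m(\Phi^t_b|_{\Ec_b})$ in terms of $m(\Phi^{nT}_b|_{\Ec_b})$; since $\Ec$ is finite-dimensional and $\Phi^s$ restricted to $\Ec$ is invertible with inverse bounded uniformly in $b, s$ (again by compactness of $B \times [0,T]$, continuity, and the uniform angle bound from Lemma~\ref{lem:repellerComplements}), we get $m(\Phi^t_b|_{\Ec_b}) \geq m(\Phi^{nT}_b|_{\Ec_b}) / m'$ for a suitable constant $m' > 0$. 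Combining these with the estimate at time $nT$ and noting $2^{-n} = e^{-\gamma n T} \leq e^{\gamma T} e^{-\gamma t}$ yields \eqref{eq:separateNorms} with $K = M m' e^{\gamma T}$ and $\gamma = (\log 2)/T$.

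The main obstacle — really the only nontrivial point — is establishing the uniform-in-$b$ lower bound on $m(\Phi^s_b|_{\Ec_b})$ for $s \in [0,T]$, equivalently a uniform upper bound on $|(\Phi^s_b|_{\Ec_b})^{-1}|$. This requires knowing that $\Ec$ is a genuine (continuous, finite-dimensional) subbundle over the compact base $B$ with $(s,b) \mapsto \Phi^s_b|_{\Ec_b}$ jointly continuous into the (finite-dimensional, hence nicely behaved) space of operators between the varying fibers; continuity of $\Ec$ is exactly Lemma~\ref{lem:attractorCtsSubbundle}, and joint continuity follows from (H4). Once continuity and finite-dimensionality are in hand, invertibility of $\Phi^s_b|_{\Ec_b}$ (automatic by injectivity, as noted after the definition of exponential separation) upgrades to a uniform bound on the inverse by a standard compactness argument. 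Everything else is routine submultiplicativity bookkeeping.
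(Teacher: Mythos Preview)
Your proposal is correct and follows essentially the same route as the paper's proof: iterate Lemma~\ref{lem:oneHalf} along multiples of $T$ (using forward invariance of $\Ec,\Fc$ and sub-/supermultiplicativity of norm and minimum norm), then interpolate to arbitrary $t$ using the uniform bound $\sup_{b,\,0\le s\le T}|\Phi^s_b|<\infty$ from uniform boundedness and the uniform lower bound $\inf_{b,\,0\le s\le T} m(\Phi^s_b|_{\Ec_b})>0$ from continuity and finite-dimensionality of $\Ec$. You have correctly identified the only nontrivial step (the uniform lower bound on $m(\Phi^s_b|_{\Ec_b})$) and supplied the right justification via Lemma~\ref{lem:attractorCtsSubbundle} and compactness.
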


\begin{proof}
From Lemma \ref{lem:oneHalf}, observe that
\[
\frac{|\Phi^{k T}_b f|}{|\Phi^{k T}_b e|} \leq \frac12 \frac{|\Phi^{(k-1) T}_b f|}{|\Phi^{(k-1) T}_b e|} \leq \cdots \leq \bigg( \frac{1}{2} \bigg)^k 
\]
for any unit vectors $e \in \Ec_b, f \in \Fc_ b$, $k \in \N$, where $T$ is as in Lemma \ref{lem:oneHalf}. Thus
\[
|\Phi^{k T}_b|_{\Fc_b}| \leq 2^{-k} m(\Phi^{k T}_b |_{\Ec_b})
\]
for all $k \in \N$.

By an argument using the Steinhaus Uniform Boundedness Principle, it follows that
\[
C_1 = \sup_{\substack{b \in B \\ 0 \leq t \leq T}} |\Phi^t_b| < \infty \, .
\]
By the continuity of $b \mapsto \Ec_b$ and finite dimenisonality, we have as well that
\[
\inf_{\substack{b \in B \\ 0 \leq t \leq T}} m(\Phi^t_b|_{\Ec_b}) =: C_2 > 0 \, .
\]
Now, if $t = k T + s$ for some $0 \leq s < T$, we estimate $|\Phi^t_b|_{\Fc_b}| \leq C_1 2^{-k} m(\Phi^{kT}_b|_{\Ec_b})$. Noting that $m(\Phi^t_b|_{\Ec_b}) \geq m(\Phi^s_{\phi^{k T} b}|_{\Ec_{\phi^{k T} b}}) \cdot m(\Phi^{k T}_b|_{\Ec_b}) \geq C_2 m(\Phi^{k T}_b|_{\Ec_b})$, it follows that
\[
|\Phi^t_b |_{\Fc_b}| \leq  C_1 C_2^{-1} 2^{-k} m(\Phi^t_b|_{\Ec_b}) \, .
\]
Thus, \eqref{eq:separateNorms} holds with 
\[
\gamma = \frac{\log 2}{ 2 T}  \quad \text{ and } K = \frac{C_1}{ C_2} \, .
\]

\end{proof}

\subsection{Continuity of the repeller subspaces}\label{subsubsec:repellerContinuity}

At last, we deduce the continuity of $b \mapsto \Fc_b$ in the Hausdorff distance $d_H$.

\begin{lem}\label{lem:fcContinuous}
The assignment $b \mapsto \Fc_b$ is continuous in the Hausdorff distance $d_H$.
\end{lem}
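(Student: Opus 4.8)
The plan is to show continuity of $b \mapsto \Fc_b$ by combining the uniform lower bound on the angle $\theta(\Ec_b, \Fc_b)$ (from Lemma \ref{lem:repellerComplements}) with the exponential separation estimate \eqref{eq:separateNorms} and a limit-set argument using asymptotic compactness. The key obstruction to overcome is that $\Fc_b$ is infinite-dimensional, so we cannot argue as in Lemma \ref{lem:compactSubbundle} by extracting convergent bases; instead we must control $\Fc_b$ indirectly through the finite-dimensional complement $\Ec_b$ and the already-established dynamical characterization of $\Ac^* = \P \Fc$.

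\medskip

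First I would fix a convergent sequence $b_n \to b$ in $B$ and aim to show $\Gap(\Fc_{b_n}, \Fc_b) \to 0$ and $\Gap(\Fc_b, \Fc_{b_n}) \to 0$; by \eqref{eq:gapEquiv} this gives $d_H(\Fc_{b_n}, \Fc_b) \to 0$. Since $\dim \Ec$ is finite and constant, Lemma \ref{lem:compactSubbundle} (via Lemma \ref{lem:attractorCtsSubbundle}) already gives that $b \mapsto \Ec_b$ is continuous in $d_H$, and by Lemma \ref{lem:repellerComplements} we have $\sup_b |\pi_{\Ec_b \ds \Fc_b}| =: C < \infty$, equivalently $\inf_b \sin \theta(\Ec_b, \Fc_b) \geq C^{-1} > 0$. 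The natural strategy is then a proof by contradiction: suppose not, so that after passing to a subsequence there is $\delta > 0$ and unit vectors $f_n \in \Fc_{b_n}$ (or $f_n \in \Fc_b$) with $d(f_n, \Fc_b) \geq \delta$ (resp. $d(f_n, \Fc_{b_n}) \geq \delta$). Decompose $f_n = e_n + g_n$ according to $\Vc_b = \Ec_b \oplus \Fc_b$ (using continuity of $\Ec$ and Lemma \ref{lem:projectionContinuity} to define $\pi_{\Ec_b \ds \Fc_b}$ near $b$, with uniformly bounded norm); the hypothesis $d(f_n, \Fc_b) \geq \delta$ forces $|e_n| \geq c > 0$ along the subsequence. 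Now push forward by $\Phi^t_{b_n}$: since $f_n \in \Fc_{b_n}$ and $e_n \in \Ec_b$, the exponential separation \eqref{eq:separateNorms} applied at $b_n$ together with continuity of the cocycle in the operator norm (H3) will force $|\Phi^t_{b_n} f_n|$ to be exponentially dominated by $|\Phi^t_{b_n} e_n|$ up to a $b_n$-independent constant, while the $\Ec_b$-component $e_n$ — being close to $\Ec_{b_n}$, with the discrepancy controlled by $d_H(\Ec_{b_n}, \Ec_b) \to 0$ — grows at least like $m(\Phi^t_{b_n}|_{\Ec_{b_n}}) |e_n|$ up to lower-order corrections. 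This yields a contradiction with $f_n$ being a genuine $\Fc_{b_n}$ vector, since $|\Phi^t_{b_n} f_n| = |\Phi^t_{b_n} e_n + \Phi^t_{b_n} g_n| \geq |\Phi^t_{b_n} e_n| - |\Phi^t_{b_n} g_n|$ would be forced to grow like $m(\Phi^t_{b_n}|_{\Ec_{b_n}})$, violating \eqref{eq:separateNorms} at $b_n$ once $t$ is large (uniformly in $n$).

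\medskip

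For the reverse inclusion $\Gap(\Fc_b, \Fc_{b_n}) \to 0$, I would run the symmetric argument: given unit $f \in \Fc_b$, decompose it according to $\Vc_b = \Ec_{b_n} \oplus \Fc_{b_n}$ (valid for $n$ large by Lemma \ref{lem:compOpen}, since $d_H(\Ec_b, \Ec_{b_n}) \to 0$ and the angle is uniformly bounded below), writing $f = e_n' + g_n'$ with $e_n' \in \Ec_{b_n}$, $g_n' \in \Fc_{b_n}$; again one shows $|e_n'| \to 0$ uniformly over $f \in S_{\Fc_b}$. If not, there is $f_n \in S_{\Fc_b}$ with $|e_n'| \geq c > 0$; but $f_n = e_n' + g_n'$ with $e_n' \in \Ec_{b_n}$, $g_n' \in \Fc_{b_n}$, and applying $\Phi^t_{b_n}$ forces $|\Phi^t_{b_n} f_n| \gtrsim m(\Phi^t_{b_n}|_{\Ec_{b_n}})$ for large $t$; on the other hand $f_n \in \Fc_b$ and one can relate $|\Phi^t_b f_n|$ to $|\Phi^t_{b_n} f_n|$ via (H3) on any compact time interval, then use the semigroup property and uniform bounds to propagate to all $t$ — or more cleanly, observe directly that $f_n \in \Fc_b$ means $\P f_n \in \Ac^*$, and use Lemma \ref{lem:repellersRepel} together with asymptotic compactness to derive a contradiction as in the proof of Lemma \ref{lem:repellerComplements}. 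Either way the contradiction is that an $\Fc_b$-vector cannot have a nonvanishing $\Ec_{b_n}$-component while still satisfying the dynamical constraints defining $\Ac^*$.

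\medskip

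The main obstacle I anticipate is making the quantitative estimates \emph{uniform in $n$}: the exponential separation constants $K, \gamma$ in \eqref{eq:separateNorms} are already uniform across $B$, which is the crucial input, but I must carefully track how the $d_H$-closeness of $\Ec_{b_n}$ to $\Ec_b$ interacts with the operator-norm continuity (H3) of $\Phi^t_{b_n}$ over a \emph{fixed} large time $t = t(\delta)$ chosen first (so that $K e^{-\gamma t} < \delta/(100 C)$, say), and only then letting $n \to \infty$. Since $t$ is fixed before $n$, (H3) gives $\Phi^t_{b_n} \to \Phi^t_b$ in operator norm, all the relevant norms and minimum-norms $m(\Phi^t_{b_n}|_{\Ec_{b_n}})$ converge to their values at $b$ (using continuity of $\Ec$), and the contradiction becomes a finite, quantitative inequality at the single time $t$. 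This ordering of quantifiers — fix $\e$, then $t$, then $n$ — is the key to keeping everything uniform, and I expect it to be the delicate point of the write-up rather than any single estimate.
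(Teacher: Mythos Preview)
Your approach is correct and rests on exactly the same two ingredients as the paper's proof—uniform exponential separation \eqref{eq:separateNorms} and operator-norm continuity (H3) at a single fixed large time—and the ``fix $t$ first, then let $n \to \infty$'' quantifier ordering you identify in the last paragraph is precisely what makes the estimates uniform.

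The paper's argument is more economical in two respects, and it is worth seeing how. First, rather than a contradiction argument with decompositions, the paper proves the direct inequality $|\pi_{\Ec_b \ds \Fc_b} v| < \e$ for any unit $v \in \Fc_{b'}$ by exploiting the equivariance identity $\Phi^n_b \circ \pi_b = \pi_{\phi^n b} \circ \Phi^n_b$ (where $\pi_b := \pi_{\Ec_b \ds \Fc_b}$). This gives in one line
\[
|\pi_b v| \, m(\Phi^n_b|_{\Ec_b}) \leq |\pi_{\phi^n b} \Phi^n_b v| \leq C' |\Phi^n_b v| \leq C' \big( |\Phi^n_b - \Phi^n_{b'}| + |\Phi^n_{b'}|_{\Fc_{b'}}| \big),
\]
and both terms on the right are controlled (by (H3) and by \eqref{eq:separateNorms} respectively) once $n$ is fixed large and then $b'$ is taken close to $b$. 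This single chain of inequalities replaces your decompose--push-forward--compare manoeuvre; in particular it avoids the step where you bound $|\Phi^t_{b_n} g_n|$ with $g_n \in \Fc_b$, which is the place your sketch is least explicit (it does work, via (H3), but the paper's route sidesteps it entirely). Second, the paper does not prove $\Gap(\Fc_b, \Fc_{b'}) \to 0$ separately at all: since $\Fc_b, \Fc_{b'}$ have the same finite codimension $d = \dim \Ec$, Lemma \ref{lem:symmCloseness}(b) gives the reverse gap bound for free once $\Gap(\Fc_{b'}, \Fc_b) < \e \ll 1/d$. Your entire second-direction argument is therefore unnecessary.
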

\begin{proof}
Write $\pi_b = \pi_{\Ec_b \ds \Fc_b}$ for $b \in B$. For $b, b' \in B$ sufficiently close, we will obtain a bound on $|\pi_{\Ec_b \ds \Fc_b}|_{\Fc_{b'}}|$.

Let $v \in \Fc_{b'}$ be a unit vector. Then
\begin{align*}
|\pi_b v| \cdot m(\Phi^n_b|_{\Ec_b}) & \leq |\Phi^n_b \circ \pi_b v| = |\pi_{\phi^n b} \circ \Phi^n_b v| \leq \big( \sup_{b \in B} |\pi_b| \big) \cdot |\Phi^n_b v| \\
& \leq C' \cdot \big( |\Phi^n_b - \Phi^n_{b'}| + |\Phi^n_{b'} v| \big) \, .
\end{align*}
Here, $C' = \sup_{b \in B} |\pi_x| < \infty$ by Lemma \ref{lem:repellerComplements}. Given $\e > 0$, fix $n$ for which $2 C' K e^{- n \gamma} < \e $; with this value of $n$ fixed, let $\d > 0$ be such that if $d_B(b, b') < \d$, then $|\Phi_b^n - \Phi_{b'}^n| < |\Phi^n_b|_{\Fc_b}|$ (the value of which may, a priori, depend on $b$). Plugging all this in,

\[
|\pi_b v| \leq 2 C'  \frac{|\Phi^n_b|_{\Fc_b}|}{m(\Phi^n_b|_{\Ec_b})} \leq 2 C' K e^{- n \gamma} < \e \, .
\]
Since $v$ was arbitrary, we conclude that
\[
\Gap(\Fc_{b'}, \Fc_b) \leq |\pi_b|_{\Fc_{b'}}| < \e
\]
whenever $d_B(b, b') < \d$.

Assuming, as we may, that $\e \ll 1/d$, where $d = \dim \Ec$, it follows from Lemma \ref{lem:symmCloseness} that $\Gap(\Fc_b, \Fc_{b'}) \leq d \e / (1 - d \e) \leq 2 d \e$. By \eqref{eq:gapEquiv}, we conclude that $d_H(\Fc_b, \Fc_{b'}) \leq 4 d \e$. This completes the proof.
\end{proof}

\section{Completing the proofs of Theorems \ref{thm:equiv} and \ref{thm:finestDecomp}}\label{sec:cleanUp}

Throughout we are in the setting of Theorems \ref{thm:equiv}, \ref{thm:finestDecomp}.

\subsection{Completing the proof of Theorem \ref{thm:equiv}}

In \S \ref{sec:hard}, we showed that an attractor-repeller pair $\Ac, \Ac^*$ gives rise to an exponential splitting $\Vc = \Ec \oplus \Fc$, where $\P \Ec = \Ac, \P \Fc = \Ac^*$. Below we prove the converse implication. 

\begin{prop}\label{prop:easyConverse}
Let $\Vc = \Ec \oplus \Fc$ be an exponential splitting. Then $\Ac = \P \Ec$ is an asymptotically compact attractor for the projectivized flow $\P \Phi$.
\end{prop}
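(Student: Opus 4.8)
\textbf{Proof plan for Proposition \ref{prop:easyConverse}.}
The plan is to verify that $\Ac = \P \Ec$ satisfies the criterion of Definition \ref{defn:asymptoticallyCompact2}: namely, that $\Ac$ is compact and forward invariant, and that $(\Ac, B_\e(\Ac))$ is an asymptotically compact attractor pair for some (hence all sufficiently small) $\e > 0$. Compactness of $\Ac$ follows from Lemma \ref{lem:compactSubbundle} applied to the continuous finite-dimensional subbundle $\Ec$ (whose unit sphere is compact, being a continuous finite-dimensional subbundle over the compact base $B$), and forward invariance of $\Ac$ is immediate from forward invariance of $\Ec$. The remaining — and main — work is to exhibit a preattractor $U$ around $\Ac$ for which the attractor pair is asymptotically compact and the associated attractor equals $\Ac$.

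First I would fix $\e > 0$ small enough that, writing $C^* = \sup_{b \in B} |\pi_{\Fc_b \ds \Ec_b}| < \infty$ (finite since $b \mapsto \pi_{\Ec_b \ds \Fc_b}$ is continuous on the compact base by Lemma \ref{lem:projectionContinuity}, using that $\Ec, \Fc$ are complementary continuous subbundles), the set $U := B_\e(\Ac)$ is contained in a `cone' around $\Ec$ in the sense that for $(b, \P v) \in U$ with $v$ a unit vector decomposed as $v = e + f$ along $\Vc_b = \Ec_b \oplus \Fc_b$, one has $|f| / |e| \le \delta(\e)$ with $\delta(\e) \to 0$ as $\e \to 0$; this is exactly the content of Lemma \ref{lem:projectiveEstimate} together with \eqref{eq:minAngle}. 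The key computation is then a cone-invariance/contraction estimate: for $(b, \P v) \in U$, using the exponential separation estimate $|\Phi^t_b|_{\Fc_b}| \le K e^{-\gamma t} m(\Phi^t_b|_{\Ec_b})$, I would bound
\[
\frac{|\pi_{\Fc_{\phi^t b} \ds \Ec_{\phi^t b}} \Phi^t_b v|}{|\pi_{\Ec_{\phi^t b} \ds \Fc_{\phi^t b}} \Phi^t_b v|} = \frac{|\Phi^t_b f|}{|\Phi^t_b e|} \le \frac{|\Phi^t_b|_{\Fc_b}| \cdot |f|}{m(\Phi^t_b|_{\Ec_b}) \cdot |e|} \le K e^{-\gamma t} \delta(\e) \, ,
\]
and then translate this back via Lemma \ref{lem:projectiveEstimate} into the statement $d_\P(\P \Phi^t_b v, \P \Ec_{\phi^t b}) \le 2 C^* K e^{-\gamma t} \delta(\e)$. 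For $t \ge T$ with $T = T(\e)$ large this is $< \e$, which gives $\overline{\P\Phi([T,\infty) \times U)} \subset B_\e(\Ac) = U$, so $U$ is a preattractor; and letting $t \to \infty$ shows that the associated attractor $\omega(U)$ is contained in $\Ac$, while $\Ac \subset \omega(U)$ is clear from invariance, so the attractor of $U$ is exactly $\Ac$.

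It remains to check asymptotic compactness of $(\Ac, U)$: given $t_n \to \infty$ and $(b_n, \P v_n) \in U$, I want a convergent subsequence of $\{\P\Phi^{t_n}_{b_n} v_n\}$. Decomposing unit representatives as $v_n = e_n + f_n$ along $\Ec_{b_n} \oplus \Fc_{b_n}$, the estimate above shows $d_\P(\P\Phi^{t_n}_{b_n} v_n, \P \Ec_{\phi^{t_n} b_n}) \to 0$, so it suffices to find a convergent subsequence among points $(\phi^{t_n} b_n, \P \hat e_n)$ with $\hat e_n \in \Ec_{\phi^{t_n} b_n}$ unit; but these all lie in the compact unit sphere bundle $S_\Ec$ of the continuous finite-dimensional subbundle $\Ec$ (compact by Lemma \ref{lem:compactSubbundle}), hence in the compact set $\P \Ec = \Ac \subset \P\Vc$, so a convergent subsequence exists, and by the triangle inequality for $d_{\P\Vc}$ the original sequence has the same subsequential limit. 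The only mild subtlety — and the step I expect to require the most care — is bookkeeping the uniformity: making sure $C^*$, $K$, $\gamma$, and the compactness of $S_\Ec$ are all genuinely uniform over $b \in B$, which they are precisely because $B$ is compact, $\Ec$ and $\Fc$ are \emph{continuous} (not merely measurable) complementary subbundles, and the exponential separation constants $K, \gamma$ are by definition uniform in $b$. With these in hand, Definition \ref{defn:asymptoticallyCompact2} is satisfied and the proof is complete.
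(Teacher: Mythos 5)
Your proof is correct and follows essentially the same route as the paper's: both hinge on the cone-contraction estimate obtained by combining the exponential separation constants $K, \gamma$ with Lemma \ref{lem:projectiveEstimate} to show $d_\P(\P\Phi^t_b v, \P\Ec_{\phi^t b})$ decays exponentially and uniformly for $\P v \in B_\e(\Ac)$, hence $B_\e(\Ac)$ is a preattractor with attractor $\Ac$. The only difference is cosmetic: you verify asymptotic compactness of $(\Ac, B_\e(\Ac))$ directly via compactness of the sphere bundle $S_\Ec$, whereas the paper invokes the characterization of Lemma \ref{lem:asympCompact2}; but the implication (b)$\,\Rightarrow\,$(a) used there is precisely the argument you reproduce.
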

\begin{proof}
Let $\Ac = \P \Ec$. By Lemma \ref{lem:asympCompact2}, it suffices to show that $B_\e(\Ac)$ is a preattractor for all $\e > 0$ sufficiently small. This we obtain by showing the following: for any $\e > 0$ sufficiently small, there exists $T = T_\e > 0$ such that for any $b \in B$, $\P v \in B_\e(\Ac)$, we have that
\[
d_\P(\P \Phi^t_b v, \Ac_{\phi^t b}) \leq \e /2 
\]
for all $t \geq T_\e$. Here $v \in \Vc_b$ is a unit vector representative for $\P v \in \P \Bc$.

Let $\e > 0$, which we will adjust smaller a finite number of times in the following proof. Let us write $v = e + f$ and $v_t = \Phi^t_b v = e_t + f_t$ according to the splittings $\Ec_b \oplus \Fc_b$ and $\Ec_{\phi^t b} \oplus \Fc_{\phi^t b}$, respectively.  Using Lemma \ref{lem:projectiveEstimate}, we estimate
\[
(*) = d_\P(\P \Phi^t_b v, \Ac_{\phi^t b}) \leq 2 \frac{|\pi_{\Fc_b \ds \Ec_b} \Phi^t_b v|}{|\Phi^t_b v|} = 2 \frac{|f_t|}{|v_t|} \leq 2 \frac{|f_t|}{|e_t| - |f_t|} = 2 h\bigg(\frac{|f_t|}{|e_t|}\bigg) \, ,
\]
where $h(r) = \frac{r}{1 - r}$ is an increasing function $[0,1) \to [0,\infty)$. Now, exponential separation implies that
\[
\frac{|f_t|}{|e_t|} \leq K e^{- \gamma t} \frac{|f_0|}{|e_0|} \, .
\]
Finally, we observe that $|e_0| \geq 1 - |f_0|$, hence $\frac{|f_0|}{|e_0|} \leq h(|f_0|)$, and that $|f_0| \leq \e |\pi_{\Fc_b \ds \Ec_b}|$ by Lemma \ref{lem:projectiveEstimate}. Collecting, we have that
\[
(*) \leq 2 h \big( K e^{- \gamma t} h(\e |\pi_{\Fc_b \ds \Ec_b}|) \big) \, .
\]

Taking $\e \leq \min\{ 1, 1/(10 C')\}$, where $C' := \sup_{b \in B} |\pi_{\Fc_b \ds \Ec_b}| < \infty$, yields $(*) \leq 2 h(2 K C' e^{- \gamma t} \e)$. Letting $T = T_\e > 0$ be sufficiently large so that $2 K e^{- \gamma T}  \leq 1/10$, 
\[
(*) \leq 8 K C' e^{- \gamma t} \e \, ,
\]
which is $\leq \e / 2$ when $T$ is chosen still larger so that $8 K C' e^{- \gamma T} \leq 1/2$.

\end{proof}

\subsection{Proof of Theorem \ref{thm:finestDecomp}}

The plan for the proof of Theorem \ref{thm:finestDecomp} is as follows.
\begin{enumerate}
\item In \S \ref{subsubsec:algorithm}, we present an algorithm for constructing the attractor sequence $\{ \Ac_i\}$ as in the statement of Theorem \ref{thm:finestDecomp}.
\item In \S \ref{subsubsec:checkAlgorithm}, we check that the algorithm from \S \ref{subsubsec:algorithm} produces an attractor sequence with the property (b) in Theorem \ref{thm:finestDecomp}, namely, that $\{\Ac_i\}$ is the `finest' attractor sequence.
\end{enumerate}


\subsubsection{An algorithm for producing the `finest' attractor sequence $\Ac_1 \subset \Ac_2 \subset \cdots$}\label{subsubsec:algorithm}

We begin by defining
\[
k_1 = \inf \{k \in \N : \Phi \text{ has an exponential separation of index } k \} \, ,
\]
where by convention we set $k_1 = \infty$ if the $\inf$ is taken over an empty set (i.e. no exponential separation exists). If $k_1 = \infty$ then we set $N = 0$ and terminate the procedure; otherwise we let $\Vc = \Vc_1 \oplus \Vc_1^-$ be the (unique; see Lemma \ref{lem:expSepUnique}) exponential separation of index $k_1$ for $\Phi$. We now define $\Ac_1 := \P \Vc_1$, which by Theorem \ref{thm:equiv} is an asymptotically compact attractor.

We now proceed by setting
\[
k_2 = \inf \{k \in \N : \Phi|_{\Vc_1^-} \text{ has an exponential separation of index } k\} \, .
\]
If $k_2 = \infty$ then we set $N = 1$ and terminate the procedure; otherwise we let $\Vc_1^- = \Vc_2 \oplus \Vc_2^-$ denote the (unique) exponential separation for $\Phi|_{\Vc_1^-}$ of index $k_2$. We now define $\Ac_2 := \P (\Vc_1 \oplus \Vc_2)$. It is quite clear that $\Vc_2^+ := \Vc_1 \oplus \Vc_2$ is exponentially separated from $\Vc_2^-$, and so it follows from Theorem \ref{thm:equiv} that $\Ac_2$ is an asymptotically compact attractor.

We now describe the inductive step: assuming the procedure has not been terminated by step $n-1$, let $\{k_i\}_{i = 1}^{n-1} \subset \N$ and $\Vc_1, \Vc_2, \cdots, \Vc_{n-1}$ and $\Vc_{n-1}^-$ be as above. We set
\[
k_n = \inf\{k \in \N : \Phi|_{\Vc_{n-1}^-} \text{ has an exponential separation of index } k\} \, .
\]
If $k_n = \infty$ we set $N = n-1$ and terminate; otherwise we let $\Vc_n^- = \Vc_n \oplus \Vc_{n}^-$ denote the exponential separation for $\Phi_{\Vc_{n-1}^-}$ of index $k_n$. We set $\Ac_n = \P(\Vc_1 \oplus \cdots \oplus \Vc_n)$, which as before is an asymptotically compact attractor.

If at each stage $n$ we have $k_n < \infty$, then the algorithm proceeds indefinitely and we set $N = \infty$. This completes the description of the algorithm.

\subsubsection{Checking the algorithm works}\label{subsubsec:checkAlgorithm}

The following is a reformulation of part (b) of Theorem \ref{thm:finestDecomp}.
\begin{lem}\label{lem:finestDecomp}
Let $N \in \N \cup \{\infty\}, \{\Ac_i\}_{i = 0}^N$ be as in \S \ref{subsubsec:algorithm}. If $\Ac$ is any nonempty asymptotically compact attractor, then $\Ac = \Ac_i$ for some $1 \leq i < N + 1$. 
\end{lem}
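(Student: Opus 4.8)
The goal is to show that the algorithm in \S\ref{subsubsec:algorithm} captures \emph{every} nonempty asymptotically compact attractor. By Proposition \ref{prop:compSubspace}, any such attractor $\Ac$ arises from a finite-dimensional exponentially separated splitting $\Vc = \Ec \oplus \Fc$ with $\Ec = \P^{-1}\Ac$, $\Fc = \P^{-1}\Ac^*$, where $\dim \Ec =: m < \infty$; conversely Proposition \ref{prop:easyConverse} says any such splitting gives an asymptotically compact attractor. So the content of Lemma \ref{lem:finestDecomp} is purely about exponential splittings: I must show that if $\Phi$ has an exponential splitting of index $m$, then $m = k_1 + k_2 + \cdots + k_i$ for some $1 \leq i < N+1$, and moreover that the corresponding subbundle $\Ec$ equals $\Vc_1 \oplus \cdots \oplus \Vc_i$. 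The key tools are the uniqueness of exponential splittings of a given index (Lemma \ref{lem:expSepUnique}) and the induction-type result (Proposition \ref{prop:moveDownSubspace}) relating exponential splittings of $\Phi$ to those of $\Phi|_\Fc$ for an already-found splitting.

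\textbf{Main argument.} Let $\Vc = \Ec \oplus \Fc$ be the exponential splitting with $\P\Ec = \Ac$, $\dim\Ec = m$. I claim that the index $k_1$ from the algorithm is well-defined and $\leq m$: indeed the set of indices of exponential splittings is nonempty (it contains $m$), so $k_1 < \infty$ and $N \geq 1$. Now I proceed by induction on $i$ to show: if $m \geq k_1 + \cdots + k_i$ then either $m = k_1 + \cdots + k_i$ (in which case, by uniqueness via Lemma \ref{lem:expSepUnique} applied after Proposition \ref{prop:moveDownSubspace} peels off $\Vc_1 \oplus \cdots \oplus \Vc_{i-1}$, we get $\Ec = \Vc_1 \oplus \cdots \oplus \Vc_i$ and hence $\Ac = \Ac_i$), or $m > k_1 + \cdots + k_i$, in which case I must show the algorithm does not terminate at step $i$ (so $k_{i+1} < \infty$, $N \geq i+1$) and that $m \geq k_1 + \cdots + k_{i+1}$. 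The crucial step uses Proposition \ref{prop:moveDownSubspace}: since $\Phi$ has an exponential splitting of index $m > k_1 + \cdots + k_i = \dim(\Vc_1 \oplus \cdots \oplus \Vc_i) = \dim \Vc_i^+$, and $\Vc = \Vc_i^+ \oplus \Vc_i^-$ is exponentially separated, Proposition \ref{prop:moveDownSubspace} gives that $\Phi|_{\Vc_i^-}$ has an exponential splitting of index $m - (k_1 + \cdots + k_i) \geq 1$. Hence the infimum defining $k_{i+1}$ is over a nonempty set, so $k_{i+1} < \infty$ and $k_{i+1} \leq m - (k_1 + \cdots + k_i)$, i.e. $m \geq k_1 + \cdots + k_{i+1}$. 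This closes the induction.

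\textbf{Termination.} It remains to argue the induction actually terminates at the correct value, i.e. that we cannot have $m > k_1 + \cdots + k_i$ for all $i \leq N$. If $N < \infty$, the algorithm stopped because $\Phi|_{\Vc_N^-}$ has no exponential splitting; but the induction above would force (if $m > k_1 + \cdots + k_N$) an exponential splitting of $\Phi|_{\Vc_N^-}$ of positive index, a contradiction — so $m = k_1 + \cdots + k_i$ for some $i \leq N$. If $N = \infty$, then the partial sums $k_1 + \cdots + k_i$ are strictly increasing positive integers, hence eventually exceed the fixed finite number $m$; the smallest $i$ with $k_1 + \cdots + k_i \geq m$ must in fact give equality, because at the previous step we had $m > k_1 + \cdots + k_{i-1}$, forcing (by Proposition \ref{prop:moveDownSubspace} and the definition of $k_i$ as an infimum) $k_i \leq m - (k_1 + \cdots + k_{i-1})$, whence $k_1 + \cdots + k_i \leq m$, so $k_1 + \cdots + k_i = m$. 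Then Lemma \ref{lem:expSepUnique} (after iterating Proposition \ref{prop:moveDownSubspace} to reduce to uniqueness of the index-$m$ splitting of $\Phi$) identifies $\Ec$ with $\Vc_1 \oplus \cdots \oplus \Vc_i$ and hence $\Ac = \P\Ec = \Ac_i$.

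\textbf{Expected obstacle.} The main subtlety is the identification step $\Ec = \Vc_1 \oplus \cdots \oplus \Vc_i$ once the dimensions match: this is not immediate from Lemma \ref{lem:expSepUnique} as stated, since that lemma is about two splittings \emph{of $\Vc$} of the same index, and here one of them, $\Vc_1 \oplus \cdots \oplus \Vc_i$ versus $\Vc_i^-$, was built inductively. One needs to check that $\Vc_1 \oplus \cdots \oplus \Vc_i$ together with $\Vc_i^-$ is genuinely an exponential splitting of $\Vc$ of index $m$ — this was already noted in the algorithm ("It is quite clear that $\Vc_2^+ := \Vc_1 \oplus \Vc_2$ is exponentially separated from $\Vc_2^-$") and follows by transitivity of exponential separation, which should be stated and used carefully. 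Once that is in hand, Lemma \ref{lem:expSepUnique} applies directly to the two index-$m$ splittings $\Ec \oplus \Fc$ and $\Vc_i^+ \oplus \Vc_i^-$, giving $\Ec = \Vc_i^+$ and therefore $\Ac = \Ac_i$.
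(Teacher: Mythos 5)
Your argument is essentially the paper's: both reduce the lemma to the claim that the indices of exponential splittings of $\Phi$ are precisely the partial sums $k_1, k_1+k_2, \ldots$, and both establish this by induction using Proposition \ref{prop:moveDownSubspace} (to peel off $\Vc_i^+$) combined with Lemma \ref{lem:expSepUnique} (uniqueness) to identify the subbundles. The only cosmetic difference is that the paper enumerates all candidate indices $\hat k_n$ up front and proves $\hat k_n = k_1 + \cdots + k_n$ in general, whereas you run the same induction anchored to the fixed index $m$ of the given attractor. Your flagged obstacle — that one must verify $\Vc_1 \oplus \cdots \oplus \Vc_i$ and $\Vc_i^-$ genuinely form an exponential splitting \emph{of $\Phi$} (transitivity of exponential separation), not merely of the restriction — is a legitimate point which the paper handles only with the unproved aside ``It is quite clear that $\Vc_2^+ := \Vc_1 \oplus \Vc_2$ is exponentially separated from $\Vc_2^-$'' in the description of the algorithm; spelling this out would strengthen the exposition.
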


\begin{proof}
Let us define 
\begin{align*}
\hat k_1 &= \inf\{k \in \N : \Phi \text{ has an exponential separation of index } k \} \, , \quad \text{ and inductively,} \\
\hat k_n & = \inf \{k > k_{n-1} : \Phi \text{ has an exponential separation of index } k\} \, .
\end{align*}
where as usual the $\inf$ of an empty set is $\infty$. In this construction we set $\hat N = n$ to be the first stage $n$ for which $\hat k_n = \infty$, and set $\hat N = \infty$ if this never occurs.

To prove Lemma \ref{lem:finestDecomp}, it suffices by Lemma \ref{lem:expSepUnique} to show that $\hat N = N$ and $\hat k_n = k_1 + \cdots + k_n$ for all $1 \leq n < N + 1$. If $N = 0$, then $\hat N = 0$ clearly holds and there is nothing to check. Otherwise, $\hat k_1 = k_1$ by definition and $N, \hat N \geq 1$. 

Continuing, note that if $N = 1$ then $k_2 = \infty$; by Proposition \ref{prop:moveDownSubspace} we conclude $\hat k_2 = \infty$ and thus $\hat N = \infty$. Otherwise, $N, \hat N \geq 2$ and $\hat k_2 = k_1 + k_2$ by Proposition \ref{prop:moveDownSubspace}.

The induction hypothesis is that $N, \hat N \geq n-1$ and $\hat k_{l} = k_1 + \cdots + k_l$ for all $l \leq n -1$. If $N = n$, then $\hat k_n = \infty$ and $\hat N = n$ as before. Otherwise $\hat N, N \geq n$ and $\hat k_n = k_1 + \cdots + k_n$. This completes the proof.
\end{proof}

\bibliography{selBiblio}
\bibliographystyle{plain}

\end{document}